\documentclass[leqno,final]{siamltex}
\setlength{\hoffset}{.7in}
\pagestyle{myheadings}

\usepackage{graphicx} 
\usepackage{amsmath,amstext,amssymb,bm}
\usepackage{leftidx}

\usepackage{xcolor} 
\usepackage{soul} 
\usepackage{tikz}
\usetikzlibrary{shapes,arrows}
\usepackage{mathrsfs}
\usepackage{epstopdf}
\usepackage{color}
\usepackage{multirow}
\usepackage{tabularx}
\usepackage[shortlabels]{enumitem}

\usepackage{colortbl}
\usepackage{booktabs}


\numberwithin{equation}{section}
\newtheorem{remark}{Remark}[section]

\allowdisplaybreaks[4]

\usepackage{hyperref}
\hypersetup{
	colorlinks=true,   
	linkcolor=blue,    
	citecolor=blue,    
	filecolor=magenta, 
	urlcolor=cyan      
}

\def\u{\mathbf{u}}

\def\X{\textbf{X}}
\def\v{\textbf{v}}
\def\w{\textbf{w}}

\def\V{\textbf{V}}
\def\H{\textbf{H}}

\def\a{\textbf{a}}

\def\g{\textbf{g}}

\def\nuu{\boldsymbol{\nu}}
\def\sigmaa{{\boldsymbol{\sigma}}}
\def\Y{\textbf{Y}}

\begin{document}
	
	\title{Unconditionally stable Gauge--Uzawa finite element schemes  for the
		chemo-repulsion-Navier-Stokes system\thanks{This work was supported by National Natural Science Foundation of China (No. 12471406) and the Science
			and Technology Commission of Shanghai Municipality (Grant Nos. 22JC1400900, 22DZ2229014).} 
		}
	\markboth{CHENYANGLI, PING LIN AND HAIBIAO ZHENG}{GU-FEM for the
		chemo-repulsion-Navier-Stokes system }

	\author{Chenyang Li
		\thanks{School of Mathematical Sciences, East China Normal University, Shanghai, 200241, China. \texttt{(chenyangli1004@yeah.net)}.}
			\and  Ping Lin
		\thanks{Division of Mathematics, University of Dundee, Dundee, DD1 4HN, UK. \texttt{(P.Lin@dundee.ac.uk)} 
		}
		\and  Haibiao Zheng
		\thanks{ \textbf{Corresponding author}. School of Mathematical Sciences, Ministry of Education Key Laboratory of Mathematics and Engineering Applications, Shanghai Key Laboratory of PMMP,  East China Normal University, Shanghai, 200241, China. \texttt{(hbzheng@math.ecnu.edu.cn)} }
	}

	\maketitle
	
	\begin{abstract} 
This paper investigates a Gauge--Uzawa finite element method (GU-FEM) for the two-dimensional chemo--repulsion--Navier--Stokes (CRNS) system.
The proposed approach establishes a fully discrete projection framework that integrates the advantages of both the canonical and Uzawa--type formulations, while preserving variational consistency. 
The proposed GU-FEM possesses two notable advantages:
(1) it requires no initial pressure value;
(2) it avoids artificial pressure boundary conditions, thereby reducing computational overhead.
Furthermore, the scheme is shown to be unconditionally energy stable, unique solvability and optimal error estimates are derived for the cell density, chemical concentration, and fluid velocity. Finally, several numerical experiments are presented to demonstrate the accuracy, stability, and efficiency of the proposed consistent projection finite element method.
	\end{abstract}
	
	\begin{keywords}
Chemo-repulsion-Navier-Stokes system,  Gauge--Uzawa method, unconditional stability, error analysis
	\end{keywords}
	
	\begin{AMS}
		65N12, 
		65N15, 
		65N30, 
	\end{AMS}
	
\section{Introduction}	\label{introduction}
%

Chemotaxis refers to the directed movement of microorganisms in response to chemical stimuli, where they move toward favorable (chemo-attraction) or away from unfavorable (chemo-repulsion) chemical environments \cite{filbet2006}. This phenomenon plays a crucial role in various biological processes, including biofilm formation, nitrogen fixation, microbial migration in soil, and oil recovery \cite{karmakar2021}.
The Keller--Segel system is a classical mathematical model for describing chemotactic behavior. While its theoretical analysis has been extensively studied, the corresponding numerical analysis remains relatively limited.
A key challenge lies in constructing numerical schemes that preserve essential physical properties such as positivity, mass conservation, and energy dissipation.
Recent studies have proposed several energy-stable or positivity--preserving schemes including finite volume and finite element approaches \cite{chatard2014,guillen1,guillen2,guillen3,guillen4,shen2020,wang2022}.

As we know, the microorganisms often do not exist in isolation in nature, which often live in the viscous fluid so that microorganisms and
chemoattractant are transported with the fluid.
In this work, we consider a chemo-repulsion--Navier--Stokes system with a quadratic production term, 
which describes the interaction between the motion of microorganisms and the surrounding incompressible fluid under chemical repulsion effects.
The governing equations are given by \cite{wang2023}
\begin{align}\label{biosav-model}
	\begin{cases}
		\eta_t + \nabla \cdot (\u \eta) - \mu_1 \Delta \eta = \nabla \cdot (\eta \nabla c), & \text{in } \Omega \times (0, T), \\[3pt]
		c_t + \nabla \cdot (\u c) - \mu_2 \Delta c + c = \dfrac{1}{2} \eta^2, & \text{in } \Omega \times (0, T), \\[3pt]
		\u_t + (\u \cdot \nabla) \u - \mu_3 \Delta \u + \nabla p = -\eta \nabla \eta - \nabla \cdot (\nabla c \otimes \nabla c), & \text{in } \Omega \times (0, T), \\[3pt]
		\nabla \cdot \u = 0, & \text{in } \Omega \times (0, T),
	\end{cases}
\end{align}
where $\Omega \subset \mathbb{R}^2$ denotes a bounded domain with Lipschitz continuous boundary $\partial \Omega$, $\eta(x,t)$ and $c(x,t)$ denote the cell or microorganism density and chemical concentration, respectively, 
while $\u(x,t)$ and $p(x,t)$ represent the velocity and pressure of the incompressible fluid.
The positive constants $\mu_1$, $\mu_2$, and $\mu_3$ correspond to the diffusion and viscosity coefficients. 

The advection terms $\nabla \cdot(\u \eta)$ and $\nabla \cdot(\u c)$ describe the transport of microorganisms and chemical substances 
caused by the background flow field. In contrast, the nonlinear forcing terms $-\eta \nabla \eta$ and $-\nabla \cdot(\nabla c \otimes \nabla c)$ 
characterize the feedback of biological effects on the fluid motion. 
The former represents the self-interaction of the cell density, while the latter arises from the gradient-induced 
chemical stress on the surrounding fluid. These coupling mechanisms lead to a complex interplay between chemotaxis and fluid convection.

Up to now, numerous analytical results have been established concerning the existence and uniqueness of solutions to the classical chemotaxis--fluid system \cite{chae2013,kozono2016,winkler2012,winkler2014,zhang2014}.
In parallel, several studies have investigated numerical methods for the chemotaxis--Navier--Stokes system. For instance, in \cite{lee2015}, the authors numerically examined the formation of falling bacterial plumes induced by bioconvection in a three-dimensional chamber. A high-resolution vorticity-based hybrid finite volume/finite difference scheme for the chemotaxis--fluid model was proposed in \cite{chertock2012}, while an upwind finite element method was developed in \cite{deleuze2016} to analyze pattern formation and hydrodynamic stability.
However, despite these advances, rigorous numerical analysis for the fully coupled chemotaxis--fluid system remains relatively scarce.
In \cite{feng2021}, the authors designed a linear, positivity--preserving finite element scheme for the chemotaxis--Stokes system, yet the fluid convection term was neglected, limiting its applicability to more general flow regimes.
In \cite{guillen5}, finite element error estimates were derived for the chemotaxis--Navier--Stokes equations, though the pressure approximation was only partially addressed.
A linear, decoupled fully-discrete finite element scheme by combining the
scalar auxiliary variable (SAV) approach, implicit-explicit (IMEX) scheme and pressure-projection method is presented in \cite{lijian2023}, and another  linear and decoupled structure,
which is constructed by using the discontinuous Galerkin (DG) method for spatial discretization, the implicit--explicit (IMEX) approach for the highly nonlinear and coupling terms, and a pressure-projection method for the
Navier-Stokes equations is done in \cite{wang2023}.

In the late 1960s, projection methods were introduced by Chorin~\cite{chorin1968} and Temam~\cite{temam1969} to decouple the velocity $\u$ and pressure $p$, thereby reducing computational cost. However, these classical methods suffer from drawbacks, such as yielding momentum equations inconsistent with the original  equation and degrading numerical accuracy.  
To address these issues, the Gauge method was developed by Oseledets~\cite{oseledets1989} and later refined in \cite{liu2003}. While the Gauge formulation is theoretically equivalent to the momentum equation and performs well under various boundary conditions, it still has disadvantages--particularly, the tangential boundary derivative is difficult to implement in a finite element framework, often leading to accuracy loss.  
To overcome these drawbacks, Nochetto and Pyo \cite{pyo2005} proposed the Gauge--Uzawa method for the Navier--Stokes equations, which was later extended to the Boussinesq equations \cite{pyo2006}. Subsequent works \cite{pyo2009,pyo2013} provided optimal error estimates and developed a second-order version of this method. In this article, we put forward a first
order projection method which combines the Gauge and Uzawa methods for the  Chemo-repulsion-Navier-Stokes system.

It is worth noting that most existing studies on the chemotaxis--fluid system have focused on the chemo-attraction behavior of bacteria moving toward chemoattractants, whereas only a few works have addressed energy-stable schemes for the chemotaxis--fluid coupling.
In this paper, we consider a chemo--repulsion--Navier--Stokes system to describe the repulsive movement of microorganisms in a fluid environment.
A remarkable property of this model is that the chemo--repulsion--fluid system inherently satisfies an energy dissipation law, which provides a natural foundation for the construction of stable numerical schemes.
We introduce an auxiliary variable $\boldsymbol{\sigma} = \nabla c$ to handle the cross-diffusion term, thereby controlling the strong regularity requirements of the original system.
A Gauge-Uzawa technique is employed for the Navier--Stokes equations to decouple the velocity and pressure, 
 leading to a fully discrete scheme.
Other contribution of this work is the rigorous proof of the unconditional energy stability, unique solvability and the optimal error estimates of the proposed method, which rely on several technical tools.
Finally, a series of numerical experiments are performed to verify the stability, accuracy, and efficiency of the proposed scheme.

The structure of this paper is organized as follows. In Section 2, we review some preliminary knowledge and derive the chemo--repulsion--Navier--Stokes system. Section 3 introduces a fully discrete Gauge--Uzawa finite element scheme for the chemotaxis--fluid system, and we establish its unconditional energy stability and unique solvability at the discrete level. In Section 4, we focus on deriving the optimal error estimates for the proposed scheme. Section 5 presents numerical experiences to demonstrate the accuracy and efficiency of the developed method. Finally, Section 6 concludes the paper with some remarks.

\section{Preliminary}
For $k\in N^+$ and $1\leq p\leq +\infty$, we denote $L^p(\Omega)$ and $W^{k, p}(\Omega)$ as the classical Lebesgue space and Sobolev space, respectively. The norms of these spaces are denoted by
\begin{align*}
	||u||_{L^p(\Omega)}&=\left(\int_{\Omega}|u(\mathbf{x})|^p dx \right)^\frac{1}{p},\\
	||u||_{W^{k,p}(\Omega)}&=\left(\sum\limits_{|j|\leq k}||D^ju||_{L^p(\Omega)}^p\right)^\frac{1}{p},
\end{align*}
within this context, $W^{k, 2}(\Omega)$ is also known as the Hilbert space and can be expressed as $H^k(\Omega)$.  $||\cdot||_{L^\infty}$ represents the norm of the space  $L^\infty(\Omega)$ which is defined as
\begin{equation*}
	||u||_{L^\infty(\Omega)}=ess\sup\limits_{x\in \Omega}|u(x)|.
\end{equation*}

For simplicity, we denote the inner products of both
$L^2(\Omega)$ and $\textbf{L}^2(\Omega)$
by $(\cdot,\cdot)$,  namely,
\begin{align*}
	\begin{split}
		&(u,v)=\int_\varOmega u(x)v(x) d x \quad \forall  \, u,v\in L^2(\Omega),\\
		&({\bf u},{\bf v})=\int_\Omega {\bf u}(x)\cdot {\bf v}(x) dx \quad \forall \, {\bf u},{\bf v}\in {\bf L}^2(\Omega) .\\
	\end{split}
\end{align*}

We use the following function spaces
\begin{align}
	Q &:= H_0^1(\Omega) := \{\varphi \in H^1(\Omega) : \varphi = 0 \text{ on } \partial\Omega\}, \\
	\X &:= \H_{\sigmaa}^1(\Omega) := \{\sigmaa \in [\H^1(\Omega)]^2 : \sigmaa \cdot \nuu = 0 \text{ on } \partial\Omega\}, \\
	\Y &:= \H_0^1(\Omega) := \{\v \in [\H^1(\Omega)]^2 : \v= 0 \text{ on } \partial\Omega\}, \\
	M &:= L_0^2(\Omega) := \{q \in L^2(\Omega) : \int_{\Omega} q \, dx = 0\}.
\end{align}

We introduce appropriately defined skew-symmetric trilinear forms, which facilitate the stability analysis and the derivation of error estimates.
\begin{align}
	\begin{split}
		b(\textbf{u},\textbf{v},\textbf{w}) &= \int_\Omega (\u \cdot \nabla) \textbf{v} \cdot\textbf{w} dx + \frac{1}{2} \int_{\Omega} (\nabla \cdot \u) \v \cdot \w dx \\
		&=   \frac{1}{2}\int_\Omega (\u \cdot \nabla) \textbf{v} \cdot\textbf{w} dx -  \frac{1}{2} \int_{\Omega} (\u \cdot \nabla) \w \cdot\v dx  \quad \forall ~\u,\v,\w\in\textbf{Y},\\
	\end{split}
\end{align}
which has the following properties \cite{heyinnian2005,heyinnian2022}
\begin{align}
	& b(\textbf{u},\textbf{v},\textbf{v})=0, \quad b(\textbf{u},\textbf{v},\textbf{w}) = - b(\textbf{u},\textbf{w},\textbf{v}),\label{biogu-trilinear1}\\
	&b(\textbf{u},\textbf{v},\textbf{w}) \leq C \| \nabla \textbf{u} \|_{L^2} \| \nabla \textbf{v} \| _{L^2} \| \nabla \textbf{w} \|_{L^2},\quad b(\textbf{u},\textbf{v},\textbf{w})  \leq C \| \textbf{u} \| ^{\frac{1}{2}}_{L^2} \| \nabla \textbf{u}\| ^{\frac{1}{2}}_{L^2} \| \nabla \textbf{v} \| _{L^2} \| \nabla \textbf{w} \|_{L^2}.
\end{align}

If $\nabla \cdot \u=0$, there holds $b(\textbf{u},\textbf{v},\textbf{w}) = ((\textbf{u} \cdot \nabla) \textbf{v},\textbf{w})= \int_\Omega (\u \cdot \nabla) \textbf{v} \cdot\textbf{w} dx $.

In what follows, we establish two fundamental lemmas concerning the discrete Gronwall inequality, which will serve as key analytical tools in the subsequent theoretical development.
\begin{lemma}\cite{heyinnian2015}
	Let $C_0, a_n, b_n, d_n$ be nonnegative numbers with integer $n \ge 0$ such that
	\begin{equation}
		a_m + \tau \sum_{n=1}^{m} b_n 
		\le \tau \sum_{n=0}^{m-1} d_n a_n + C_0, 
		\quad \forall m \ge 1,
		\label{eq:2.7}
	\end{equation}
	then there holds that
	\begin{equation}
		a_m + \tau \sum_{n=1}^{m} b_n 
		\le C_0 \exp\!\left(\tau \sum_{n=0}^{m-1} d_n\right), 
		\quad \forall m \ge 1.
		\label{eq:2.8}
	\end{equation}
\end{lemma}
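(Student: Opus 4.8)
The plan is to convert the accumulation-type hypothesis into a single-step multiplicative recursion and then iterate. First I would abbreviate the right-hand side of \eqref{eq:2.7} as $G_m := C_0 + \tau \sum_{n=0}^{m-1} d_n a_n$, with the convention $G_0 = C_0$ (empty sum). Because each $b_n$ is nonnegative, the hypothesis yields at once the two facts I rely on: the target left-hand side satisfies $a_m + \tau \sum_{n=1}^{m} b_n \le G_m$, and in particular $a_m \le G_m$. The key structural remark is that $G_m$ involves only $a_0, \dots, a_{m-1}$, so the bound $a_m \le G_m$ is genuinely available and carries no circularity.

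Next I would examine the one-step increment of $G_m$. Since
\[
G_{m+1} - G_m = \tau d_m a_m \le \tau d_m G_m,
\]
where I have used $a_m \le G_m$ together with $\tau, d_m \ge 0$, this rearranges to the recursion $G_{m+1} \le (1 + \tau d_m)\, G_m$ for all $m \ge 0$. Starting from $G_0 = C_0$ and iterating produces
\[
G_m \le C_0 \prod_{n=0}^{m-1} (1 + \tau d_n).
\]

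Finally I would invoke the elementary inequality $1 + x \le e^{x}$ (valid for $x \ge 0$) factor by factor, which converts the product into $\prod_{n=0}^{m-1}(1+\tau d_n) \le \exp\bigl(\tau \sum_{n=0}^{m-1} d_n\bigr)$. Chaining $a_m + \tau \sum_{n=1}^{m} b_n \le G_m \le C_0 \exp(\tau \sum_{n=0}^{m-1} d_n)$ then gives \eqref{eq:2.8}. I do not expect a genuine obstacle, since the argument is elementary and self-contained; the only points requiring care are the bookkeeping of the summation ranges—ensuring the sum defining $G_m$ terminates at $n = m-1$ so that the bound $a_m \le G_m$ is non-circular—and the telescoping of the multiplicative recursion into the product above.
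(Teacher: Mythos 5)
Your argument is the standard one for this discrete Gronwall inequality; the paper itself offers no proof (the lemma is quoted verbatim from the cited reference), so there is nothing to compare against at the level of technique, and your reduction to the multiplicative recursion $G_{m+1}\le(1+\tau d_m)G_m$ followed by $1+x\le e^{x}$ is exactly the expected route. The summation bookkeeping you flag is handled correctly for $m\ge 1$.

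There is, however, one genuine gap, and it sits precisely at the base case. The hypothesis \eqref{eq:2.7} is asserted only for $m\ge 1$, so it gives $a_m\le G_m$ only for $m\ge 1$; it says nothing about $a_0$. Your first application of the recursion, $G_1-G_0=\tau d_0 a_0\le \tau d_0 G_0$, therefore uses $a_0\le G_0=C_0$, which is not available from the stated hypothesis. This is not cosmetic: as literally stated the lemma is false without it. Take $\tau=1$, $C_0=0$, $d_0=1$, $a_0=1$, $b_n\equiv 0$, $a_1=1$; then \eqref{eq:2.7} holds for $m=1$ (it reads $1\le 1$), while \eqref{eq:2.8} would force $a_1\le 0$. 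The standard repair, which is surely what the cited source intends, is to read \eqref{eq:2.7} as holding for $m=0$ as well (with empty sums), i.e., to assume $a_0\le C_0$; with that convention your proof is complete as written. Alternatively, starting the iteration at $G_1$ gives the always-valid but weaker bound $a_m+\tau\sum_{n=1}^{m}b_n\le\bigl(C_0+\tau d_0 a_0\bigr)\exp\bigl(\tau\sum_{n=1}^{m-1}d_n\bigr)$, which collapses to \eqref{eq:2.8} exactly when $a_0\le C_0$. You should state this extra assumption explicitly rather than let it hide in the convention $G_0=C_0$.
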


\begin{lemma}\cite{heywood1990,thomee2006}
	Let $a_k, b_k, c_k, \gamma_k$ be sequences of nonnegative numbers such that 
	$\tau \gamma_k < 1$ for all $k$, and let $g_0 \ge 0$ be such that the following inequality holds:
	\begin{equation}
		a_K + \tau \sum_{k=0}^{K} b_k 
		\le \tau \sum_{k=0}^{K} \gamma_k a_k 
		+ \tau \sum_{k=0}^{K} c_k + g_0.
		\label{eq:2.9}
	\end{equation}
	Then
	\begin{equation}
		a_K + \tau \sum_{k=0}^{K} b_k 
		\le \left( \tau \sum_{k=0}^{K} c_k + g_0 \right)
		\exp\!\left(\tau \sum_{k=0}^{K} \sigma_k \gamma_k \right),
		\label{eq:2.10}
	\end{equation}
	where $\sigma_k = (1 - \tau \gamma_k)^{-1}$.
\end{lemma}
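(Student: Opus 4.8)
The plan is to convert the implicit estimate \eqref{eq:2.9} into an explicit recursion by absorbing the diagonal term $\tau\gamma_K a_K$, and then to close the bound with a single induction on $K$ followed by the elementary inequality $1+x\le e^x$. I read \eqref{eq:2.9} as valid for every index $0\le n\le K$, which is the usual convention for discrete Gronwall statements and is exactly what the induction needs.

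First I would split off the last summand of the first sum on the right of \eqref{eq:2.9}, writing $\tau\sum_{k=0}^{K}\gamma_k a_k=\tau\gamma_K a_K+\tau\sum_{k=0}^{K-1}\gamma_k a_k$, transfer $\tau\gamma_K a_K$ to the left, and invoke the hypothesis $\tau\gamma_K<1$ to multiply through by $\sigma_K=(1-\tau\gamma_K)^{-1}\ge 1$. Writing $E_K:=\tau\sum_{k=0}^{K} c_k+g_0$ and using $\sigma_K\ge 1$ to replace $\sigma_K\,\tau\sum_{k=0}^{K} b_k$ by the smaller $\tau\sum_{k=0}^{K} b_k$ on the left, this produces
\begin{equation*}
a_K+\tau\sum_{k=0}^{K} b_k \le \sigma_K\,\tau\sum_{k=0}^{K-1}\gamma_k a_k+\sigma_K E_K .
\end{equation*}
Since the $b_k$ are nonnegative, each $a_k$ is dominated by the full left-hand side $y_k:=a_k+\tau\sum_{j=0}^{k} b_j$, and since $E_k$ is nondecreasing in $k$, the display is a genuinely explicit recursion for the sequence $\{y_k\}$. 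Reducing directly to the explicit lemma \eqref{eq:2.7}--\eqref{eq:2.8} is awkward here, because the index-dependent prefactor $\sigma_K$ multiplies the data $E_K$; a self-contained induction is cleaner and preserves the sharp constant.

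The core step is to prove the intermediate bound
\begin{equation*}
y_K \le E_K\prod_{k=0}^{K}\sigma_k
\end{equation*}
by induction on $K$. The base case $K=0$ is immediate, since the empty sum gives $y_0\le\sigma_0 E_0$. For the inductive step I would insert $a_k\le y_k\le E_k\prod_{j=0}^{k}\sigma_j\le E_K\prod_{j=0}^{k}\sigma_j$ into the recursion, so that the claim reduces to the algebraic identity
\begin{equation*}
\tau\sum_{k=0}^{K-1}\gamma_k\prod_{j=0}^{k}\sigma_j + 1 = \prod_{j=0}^{K-1}\sigma_j .
\end{equation*}
This identity follows by telescoping from the relation $\tau\gamma_k\prod_{j=0}^{k}\sigma_j=\prod_{j=0}^{k}\sigma_j-\prod_{j=0}^{k-1}\sigma_j$ (with the convention $\prod_{j=0}^{-1}\sigma_j=1$), which is itself just $\tau\gamma_k\sigma_k=\sigma_k-1$ multiplied by $\prod_{j=0}^{k-1}\sigma_j$.

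To finish I would pass from the product to the exponential in \eqref{eq:2.10} via $1+x\le e^x$: since $\tau\sigma_k\gamma_k=\sigma_k-1$, we have $\sigma_k=1+\tau\sigma_k\gamma_k\le e^{\tau\sigma_k\gamma_k}$, whence $\prod_{k=0}^{K}\sigma_k\le\exp\big(\tau\sum_{k=0}^{K}\sigma_k\gamma_k\big)$; combining this with the intermediate bound yields exactly \eqref{eq:2.10}. The one genuinely delicate point is the first step: absorbing $\tau\gamma_K a_K$ is precisely what separates this implicit estimate from the explicit one and is what forces both the $\tau\gamma_k<1$ hypothesis and the weights $\sigma_k$. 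A crude bound such as $\sigma_K\le\max_k\sigma_k$ would survive the induction but spoil the sharp exponent, so retaining each $\sigma_k$ and exploiting the telescoping identity is what makes the constant come out as stated.
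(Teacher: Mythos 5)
Your proof is correct. The paper itself gives no proof of this lemma --- it is quoted verbatim from the cited references (Heywood--Rannacher and Thom\'ee) --- and your argument (absorbing the diagonal term $\tau\gamma_K a_K$ using $\tau\gamma_K<1$, the telescoping identity $\tau\gamma_k\prod_{j=0}^{k}\sigma_j=\prod_{j=0}^{k}\sigma_j-\prod_{j=0}^{k-1}\sigma_j$, and finally $1+x\le e^x$) is precisely the standard proof found there, including the correct reading that the hypothesis must hold for all indices up to $K$ for the induction to close.
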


The following Sobolev inequalities in 2D will be used frequently \cite{evans1949}.
\begin{align}\label{biogu-sobolev}
	\begin{cases}
		&W^{2,4}(\Omega)  \hookrightarrow W^{1,\infty}(\Omega),\\ 
		&H^2(\Omega) \hookrightarrow W^{1,q}(\Omega), \quad 2 \leq q < \infty,\\
		&H^2(\Omega) \hookrightarrow L^{\infty}(\Omega).
	\end{cases}
\end{align}

\begin{lemma}
	[Div-grad inequality] \cite{nochetto2004,nochetto2005,pyo2002,temam1977}
	If $\mathbf{v} \in \mathbf{H}^1_0(\Omega)$, then
	$$
	\|\operatorname{div} \mathbf{v}\|_{L^2} \le \|\nabla \mathbf{v}\|_{L^2}.
	$$
\end{lemma}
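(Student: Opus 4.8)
The plan is to prove the inequality by expanding both sides into Cartesian components and reducing everything to a single integration-by-parts identity made available by the homogeneous boundary condition $\mathbf{v} \in \mathbf{H}^1_0(\Omega)$. Writing $\mathbf{v} = (v_1, v_2)$ in two dimensions, I would first record
$$\|\div \mathbf{v}\|_{L^2}^2 = \|\partial_1 v_1\|_{L^2}^2 + 2\int_\Omega (\partial_1 v_1)(\partial_2 v_2)\,dx + \|\partial_2 v_2\|_{L^2}^2,$$
and compare this against
$$\|\nabla \mathbf{v}\|_{L^2}^2 = \sum_{i,j=1}^{2} \|\partial_j v_i\|_{L^2}^2 = \|\partial_1 v_1\|_{L^2}^2 + \|\partial_2 v_2\|_{L^2}^2 + \|\partial_2 v_1\|_{L^2}^2 + \|\partial_1 v_2\|_{L^2}^2.$$
The diagonal terms match, so the whole estimate reduces to bounding the single cross term $\int_\Omega (\partial_1 v_1)(\partial_2 v_2)\,dx$ by $\tfrac{1}{2}\bigl(\|\partial_2 v_1\|_{L^2}^2 + \|\partial_1 v_2\|_{L^2}^2\bigr)$.

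The key step is the symmetry identity
$$\int_\Omega (\partial_1 v_1)(\partial_2 v_2)\,dx = \int_\Omega (\partial_2 v_1)(\partial_1 v_2)\,dx,$$
which I would establish by integrating by parts once in the $x_2$-direction and once in the $x_1$-direction. Each integration produces a boundary integral over $\partial\Omega$ carrying the trace of one of the components $v_i$, and all such terms vanish because $\mathbf{v}$ has zero trace. To make the double integration by parts rigorous for a general $\mathbf{H}^1_0$ field rather than a smooth one, I would first verify the identity for $\mathbf{v} \in [C_c^\infty(\Omega)]^2$ and then pass to the limit, using the density of $[C_c^\infty(\Omega)]^2$ in $\mathbf{H}^1_0(\Omega)$ together with the fact that both sides depend continuously on $\mathbf{v}$ in the $H^1$ norm.

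With the identity in hand, Cauchy--Schwarz followed by Young's inequality gives
$$2\int_\Omega (\partial_1 v_1)(\partial_2 v_2)\,dx = 2\int_\Omega (\partial_2 v_1)(\partial_1 v_2)\,dx \le \|\partial_2 v_1\|_{L^2}^2 + \|\partial_1 v_2\|_{L^2}^2.$$
Substituting this into the expansion of $\|\div \mathbf{v}\|_{L^2}^2$ bounds it by $\|\nabla \mathbf{v}\|_{L^2}^2$, and taking square roots yields the claim.

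I expect the main (and essentially only) obstacle to be the justification of the vanishing boundary terms, since this is precisely where the homogeneous Dirichlet condition built into $\mathbf{H}^1_0(\Omega)$ enters; without it the inequality generally fails. Everything else is elementary algebra and standard inequalities. It is worth flagging that the computation is carried out in two dimensions, matching the setting of the paper; in $d$ dimensions the same argument produces the cross terms $\int_\Omega (\partial_i v_i)(\partial_j v_j)\,dx$ for $i \ne j$, each handled by the identical integration by parts, so no new difficulty arises.
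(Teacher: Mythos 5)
Your proof is correct. The paper itself gives no proof of this lemma --- it is simply quoted with citations to \cite{nochetto2004,nochetto2005,pyo2002,temam1977} --- and your argument (expanding $\|\div\mathbf{v}\|_{L^2}^2$, converting the cross term $\int_\Omega(\partial_1 v_1)(\partial_2 v_2)\,dx$ into $\int_\Omega(\partial_2 v_1)(\partial_1 v_2)\,dx$ by double integration by parts on $C_c^\infty$ fields plus density, then Cauchy--Schwarz and Young) is exactly the standard proof found in those references, with the boundary condition entering precisely where you say it must.
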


We need the following regularities assumption for the convergence analysis.

\textbf{A1}: Assume the exact solutions satisfy the following regularities
\begin{subequations}\label{biogu-regularity}
	\begin{align}
&\u \in L^\infty (0,T; H^2), \quad \u_t\in L^2(0,T;H^2\cap L^2)\quad \u_{tt}\in L^2(0,T;L^2), \\
&p\in L^2 (0,T; L^2\cap H^1),\\
&\eta \in L^{\infty}(0,T; W^{2,4}), \quad \eta_t\in L^2(0,T;H^2\cap L^2),\quad \eta_{tt}\in L^2(0,T;L^2),\\
&\sigmaa\in L^{\infty} (0,T;H^2), \quad \sigmaa_t\in L^2(0,T;H^2\cap L^2),\quad \sigmaa_{tt}\in L^2(0,T;L^2).
	\end{align}
\end{subequations}

\subsection{An equivalent system}
For analytical convenience, the chemotactic stress tensor can be decomposed as \cite{wang2023,lin2011,lin2014}
\begin{align}
	\nabla \cdot (\nabla c \otimes \nabla c)
	= \Delta c \, \nabla c + \frac{1}{2} \nabla |\nabla c|^2.
\end{align}

The gradient term $\frac{1}{2}\nabla|\nabla c|^2$ can be absorbed into the pressure term 
by defining a modified pressure
\begin{align}
	\tilde{p} = p + \frac{1}{2}|\nabla c|^2.
\end{align}

Hence, the system \eqref{biosav-model} can be equivalently rewritten in a more compact form as
\begin{subequations}\label{biosav-remodel}
	\begin{align}
		\eta_t + \nabla \cdot (\u \eta) - \mu_1 \Delta \eta &= \nabla \cdot (\eta \nabla c), && \text{in } \Omega \times (0,T), \label{biosav-3}\\
		c_t + \nabla \cdot (\u c) - \mu_2 \Delta c + c &= \dfrac{\eta^2}{2}, && \text{in } \Omega \times (0,T), \label{biosav-1}\\
		\u_t + (\u \cdot \nabla)\u - \mu_3 \Delta \u + \nabla p &= -\eta \nabla \eta - \Delta c \nabla c, && \text{in } \Omega \times (0,T), \\
		\nabla \cdot \u &= 0, && \text{in } \Omega \times (0,T), \label{biosav-4}
	\end{align}
\end{subequations}
where, for simplicity, we still denote the modified pressure $\tilde{p}$ by $p$.

The system \eqref{biosav-remodel} is supplemented with the following boundary and initial conditions:
\begin{align}\label{eq:2.4}
	\frac{\partial \eta}{\partial \nuu} = \frac{\partial c}{\partial \nuu} = 0, 
	\quad \u = 0, 
	\quad \text{on } \partial \Omega \times [0,T],
\end{align}
and
\begin{align}\label{eq:2.5}
	\eta(x,0) = \eta_0(x) \ge 0, \quad 
	c(x,0) = c_0(x) \ge 0, \quad 
	\u(x,0) = \u_0(x), 
	\quad \text{in } \Omega,
\end{align}
where $\nuu$ is the outward unit normal vector to $\partial \Omega$. 
The conditions in \eqref{eq:2.4} impose the no-flux boundary constraint for both $n$ and $c$, 
while the velocity field $\u$ satisfies the classical no-slip boundary condition.
The initial data $\eta_0$, $c_0$, and $\u_0$ represent the prescribed distributions of microorganism density, 
chemical concentration, and fluid velocity, respectively.

 The system (\ref{biosav-remodel}) satisfies the following energy dissipation law \cite{wang2023}:
$$
\mathcal{E}^1(\u, n, c) \leq \mathcal{E}^1(\u_0, n_0, c_0).
$$
where the energy functional is defined as
$$
\mathcal{E}^1(\u, n, c) = \frac{1}{2} \int_{\Omega} |\u|^2  dx + \frac{1}{2} \int_{\Omega} |n|^2  dx + \frac{1}{2} \int_{\Omega} |\nabla c|^2  dx.
$$

 It is worth noting that, the above energy law enables one to prove the existence and uniqueness of weak solution with certain
smoothness by a standard Galerkin procedure \cite{cai2008,evans1949}.

In order to numerically control the second-order nonlinear term arising in the cell  density equation, we employ the technique proposed by \cite{guillen3}, where the auxiliary variable $\boldsymbol{\sigma} = \nabla c$ is introduced. This reformulation enables the construction of a finite element scheme that imposes weaker constraints on the choice of discrete parameters. The same technique has been used in \cite{zhang2016,tang2023,duarte2}

First, by applying the gradient operator to (\ref{biosav-1}), we obtain
\begin{align}\label{biosav-2}
	\nabla c_t + \nabla(\nabla \cdot (\u c)) - \mu_2 \nabla \Delta c + \nabla c = \eta \nabla \eta.
\end{align}

Next, we introduce the auxiliary variable $\boldsymbol{\sigma} = \nabla c$ and add the term $\mu_2 \nabla \times (\nabla \times \boldsymbol{\sigma})$ using that  the vector identity $\nabla \times \sigmaa= \nabla \times (\nabla c) = 0$, we can equivalently rewrite equation~(\ref{biosav-2}) as
\begin{align}
	\boldsymbol{\sigma}_t + \nabla (\u \cdot \boldsymbol{\sigma}) - \mu_2 \Delta \boldsymbol{\sigma} + \boldsymbol{\sigma} - \eta \nabla \eta = 0,
\end{align}
where we have employed the identities 
\begin{align}
-\nabla(\nabla \cdot \boldsymbol{\sigma}) + \nabla \times (\nabla \times \boldsymbol{\sigma}) = -\Delta \boldsymbol{\sigma}, 
\qquad 
\nabla (\nabla \cdot (\u c)) = \nabla (\u \cdot \boldsymbol{\sigma}),
\end{align}
the latter of which follows from the incompressibility condition $\nabla \cdot \u = 0$.

Therefore, the system (\ref{biosav-remodel}) can be equivalently represented as
\begin{subequations}\label{biosav-rremodel}
\begin{align}
	\eta_t +  \nabla \cdot (\u \eta)   - \mu_1 \Delta \eta - \nabla \cdot (\eta \boldsymbol{\sigma}) &= 0, \\
	\boldsymbol{\sigma}_t + \nabla (\u \cdot \boldsymbol{\sigma}) - \mu_2 \Delta \sigmaa+ \boldsymbol{\sigma} - \eta \nabla \eta &= 0, \\
	\u_t + (\u \cdot \nabla)\u - \mu_3 \Delta u + \nabla p + \eta \nabla \eta + (\nabla \cdot \boldsymbol{\sigma}) \boldsymbol{\sigma} &= 0, \\
	\nabla \cdot \u &= 0.
\end{align}
\end{subequations}
where we have used the relation $(\nabla \cdot \boldsymbol{\sigma}) \boldsymbol{\sigma} = (\Delta c)\nabla c$.

The associated initial and boundary conditions for system (\ref{biosav-rremodel}) are prescribed as
\begin{align}
	\frac{\partial \eta}{\partial \nuu} = 0, 
	\quad \boldsymbol{\sigma} \cdot \nuu = 0,
	\quad 
\nabla \times \sigmaa \times \nuu|_{tang} =0 ,
	\quad \u = 0, 
	\quad \text{on } \partial \Omega \times [0, T], \\
	\eta(x,0) = \eta_0, \quad 
	\boldsymbol{\sigma}(x,0) = \nabla c_0=\sigmaa_0, \quad 
	\u(x,0) = \u_0, 
	\quad \text{in } \Omega.
\end{align}

\begin{remark}
	Since $\nabla\times\sigmaa = \nabla\times(\nabla c)=0$, the boundary condition 
	$(\nabla\times\sigmaa)\times\nuu\big|_{\mathrm{tang}}=0$ is automatically satisfied. 
	In other words, the vanishing of the curl of a gradient field ensures that the tangential component of $(\nabla\times\sigmaa)\times\nuu$ on the boundary is identically zero.
\end{remark}

This is the principal model considered in this paper. The chemo--repulsion--Navier--Stokes (CRNS) system provides a fundamental mathematical framework for describing bioconvective phenomena and chemically mediated flow patterns in active suspensions. Moreover, it serves as a prototypical model for the analysis of coupled reaction--diffusion--convection systems that arise in various biological and fluid--mechanical contexts.

\begin{remark}
	Once the transformed system~(\ref{biosav-rremodel}) is solved, the scalar field $c$ can be recovered from $\boldsymbol{\sigma}$ by solving
	\begin{align*}
		\begin{cases}
			c_t + \u \cdot \boldsymbol{\sigma} - \mu_2 \nabla \cdot \boldsymbol{\sigma} + c = \dfrac{\eta^2}{2}, & \text{in}\, \Omega, \\[6pt]
			c(x,0) = c_0, &  \text{in} \,\Omega.
		\end{cases}
	\end{align*}
\end{remark}

The weak  formulation of the system~(\ref{biosav-rremodel}) can be stated as follows:  
Find $(\eta, \boldsymbol{\sigma}, \u, p) \in Q \times \X \times \Y \times M$ such that for all $(r, \w, \v, q) \in Q \times \X \times \Y \times M$, the following relations hold:
\begin{align}
	(\eta_t, r) - (\u  \eta,\nabla r)+ \mu_1 (\nabla \eta, \nabla r) + (\eta \boldsymbol{\sigma}, \nabla r) &= 0, \label{biogu-23}\\
	(\boldsymbol{\sigma}_t, \w) - (\u \cdot  \boldsymbol{\sigma}, \nabla \cdot\w) + \mu_2 (\nabla\cdot \boldsymbol{\sigma}, \nabla \cdot  \w)+ \mu_2 (\nabla \times \boldsymbol{\sigma}, \nabla \times  \w)
	+ (\boldsymbol{\sigma}, \w) - (\eta \nabla \eta, \w) &= 0, \label{biogu-24}\\
	(\u_t, \v) + B(\u, \u, \v) + \mu_3 (\nabla \u, \nabla \v) - (p, \nabla \cdot \v)
	+ (\eta \nabla \eta, \v) + ((\nabla \cdot \boldsymbol{\sigma}) \boldsymbol{\sigma}, \v) &= 0, \label{biogu-25}\\
	(\nabla \cdot \u, q) &= 0.\label{biogu-26}
\end{align}

The transformed system (\ref{biosav-rremodel}) satisfies the following energy law:
\begin{align}\label{biogu-30}
\frac{d}{dt} \mathcal{E}^2(\u, \sigmaa, \eta) = -\mu_1 \|\nabla \eta\|_{L^2}^2 - \mu_2 \|\nabla \cdot \sigmaa\|_{L^2}^2 - \mu_2 \|\nabla \times\sigmaa\|_{L^2}^2- \mu_3 \|\nabla \u\|_{L^2}^2 - \|\sigmaa\|_{L^2}^2 \leq 0,
\end{align}
where we define
\begin{align}
	\mathcal{E}^2(\u, \sigmaa, \eta) = \frac{1}{2} \|\u\|_{L^2}^2 + \frac{1}{2} \|\sigmaa\|_{L^2}^2 + \frac{1}{2} \|\eta\|_{L^2}^2.
\end{align}

\begin{proof}
	Taking $r = \eta$ in (\ref{biogu-23}), we have
\begin{align}
	\frac12 \frac{d}{dt}\|\eta\|_{L^2}^{2} - (\u \eta, \nabla \eta) + \mu_{1}\|\nabla \eta\|_{L^2}^{2} + (\eta \sigmaa, \nabla \eta) = 0.\label{biogu-28}
\end{align}

	Next, we choose $\w = \sigmaa$ in (\ref{biogu-24}) and get
\begin{align}
	\frac12 \frac{d}{dt}\|\sigmaa\|_{L^2}^{2} - (\u \cdot \sigmaa, \nabla \cdot \sigmaa)
+ \mu_{2}\|\nabla \cdot \sigmaa\|_{L^2}^{2}+ \mu_{2}\|\nabla \times \sigmaa\|_{L^2}^{2}  + \|\sigmaa\|_{L^2}^{2} - (\eta\sigmaa, \nabla \eta) = 0.
\end{align}

	Setting $\v = \u$ in (\ref{biogu-25}) and $q = p$ in (\ref{biogu-26}), noticing that (\ref{biogu-trilinear1}) and adding them together, we arrive at
\begin{align}
	\frac12 \frac{d}{dt}\|\u\|_{L^2}^{2} + \mu_3 \|\nabla \u\|_{L^2}^{2}
+ (\u\eta, \nabla \eta) + (\u \cdot \sigmaa, \nabla \cdot \sigmaa) = 0.\label{biogu-29}
\end{align}
	
	Combining with (\ref{biogu-28})–(\ref{biogu-29}) together, the desired result (\ref{biogu-30}) can be obtained. 
\end{proof}

\section{Gauge--Uzawa finite element method}
We consider a uniform temporal partition of the time interval $[0, T]$, denoted by  
$0 = t_0 < t_1 < \cdots < t_N = T$, where $t_n = n\tau$ $(n = 0, 1, \ldots, N)$,  
$\tau = \dfrac{T}{N}$ is the uniform time-step size, and $N \in \mathbb{N}$ denotes the total number of time steps.  
For any time-dependent function \(\xi(x,t)\), denote \(\xi^n := \xi(x, t_n)\),
given a sequence of functions $\{\xi^n\}_{n=0}^N$, the backward difference operator $D_\tau$ is defined by  
\begin{equation}
	D_\tau \xi^n = \frac{\xi^n - \xi^{n-1}}{\tau}, \quad \bar{\xi}^n=2\xi^n-\xi^{n-1}.
	\label{eq:backward_difference}
\end{equation}

Let $\mathcal{T}_h$ be a family of quasi-uniform triangulations of the domain $\Omega$. 
The ordered elements are denoted by $\mathcal{K}_1, \mathcal{K}_2, \ldots, \mathcal{K}_{m_1}$, where $h_i = \operatorname{diam}(\mathcal{K}_i)$ for $i=1,2,\ldots,{m_1}$, and we define $h = \max\{h_1,h_2,\ldots,h_{m_2}\}$. 
For each $\mathcal{K} \in \mathcal{T}_h$, let $\mathbb{P}_r(\mathcal{K})$ denote the space of all polynomials on $\mathcal{K}$ of degree at most $r$. 

We employ the mini element (P1b--P1) to approximate the velocity and pressure, and the piecewise linear (P1) element to approximate cell density and the concentration. 
The corresponding finite element spaces are defined as follows \cite{scott2008}:
\begin{align*}
	Q_h &= \{ \psi_h \in L^2(\Omega) : \forall \mathcal{K} \in \mathcal{T}_h,\ \psi_h|_\mathcal{K} \in \mathbb{P}_1(\mathcal{K}) \}, \\
	\textbf{X}_h &= \{ \sigmaa_h \in [L^2(\Omega)]^2 : \forall \mathcal{K} \in \mathcal{T}_h,\ \sigmaa_h|_\mathcal{K} \in [\mathbb{P}_1(\mathcal{K})]^2 \}, \\
	\textbf{Y}_h &= \{ \v_h \in [L^2(\Omega)]^2 : \forall \mathcal{K} \in \mathcal{T}_h,\ \v_h|_\mathcal{K} \in [\mathbb{P}_1(\mathcal{K})\oplus b(\mathcal{K}]^2 \}, \\
	M_h &= \{ q_h \in L^2_0(\Omega) : \forall \mathcal{K} \in \mathcal{T}_h,\ q_h|_\mathcal{K} \in \mathbb{P}_{1}(\mathcal{K}) \}.\\
\end{align*}
where $b(\mathcal{K})$ denotes the bubble function associated with each element $\mathcal{K} \in \mathcal{T}_h$.
Moreover, the spaces $\textbf{Y}_h$ and $M_h$ satisfy the inf--sup condition \cite{babuvska1973, brezzi1974}:
\begin{align*}
	\beta \|q_h \|_{L^2} 
	\leq  \sup_{\mathbf{v}_h \in \Y_h}  
	\frac{(\nabla \cdot \mathbf{v}_h,\, q_h)}{\|\nabla \mathbf{v}_h\|_{L^2}}, 
	\quad \forall q_h \in M_h.
\end{align*}

The inverse inequality will be used frequently \cite{boffi2013}.
\begin{subequations}\label{inverse-inequality}
	\begin{align}
		\| \u_h \|_{W^{m,q}} \leq C h^{  l-m + n( \frac{1}{q} -\frac{1}{p}  )} \| \u_h\| _{W^{l,p}}, \quad \forall~ \u_h \in \textbf{X}_h \, \text{or} \,\textbf{Y}_h , \\
		\| c_h \|_{W^{m,q}} \leq C h^{  l-m + n( \frac{1}{q} -\frac{1}{p}  )} \| c_h\| _{W^{l,p}}, \quad \forall~ c_h \in  Q_h.
	\end{align}
\end{subequations}

We set the initial iterates as
$
\u_h^0 = I_{1h}\u^0,
p_h^0 = I_{2h}p^0, 
\eta_h^0 = I_{3h}\eta^0,  
\sigmaa_h^0 = I_{4h}\sigmaa^0,
$
where \( I_{1h}, I_{2h}, I_{3h} \), and \( I_{4h} \) denote the interpolation operators associated with the finite element spaces 
$ \Y_h, M_h, Q_h$ and $\X_h$, respectively. 
Based on the interpolation properties, we then obtain the following approximation error estimates:
\begin{subequations}\label{biogu-33}
\begin{align}
	\| \u^0 - I_{1h}\u^0 \|_{L^2} + h \| \nabla (\u^0- I_{1h}\u^0)\|_{L^2} \leq C h^2 \| \u^0\|_{H^2},\\
	\| p^0 - I_{2h}p^0 \|_{L^2}\leq C h \| p^0\|_{H^1},\\
	\| \eta^0 - I_{3h}{\eta}^0\|_{L^2} +h\| \nabla ( \eta^0 - I_{3h}\eta^0)\|_{L^2}\leq C h^2 \| \eta^0\|_{H^2},\\
	\| \sigmaa^0 - I_{4h}\sigmaa^0 \|_{L^2}+h\| \nabla ( \sigmaa^0 - I_{4h}\sigmaa^0)\|_{L^2} \leq C h^2 \| \sigmaa^0\|_{H^2}.
\end{align}
\end{subequations}

Now, we will present the First-order and Second-order Gauge-Uzawa Finite element scheme for  the
chemo-repulsion-Navier-Stokes system.

\textbf{First-order Gauge--Uzawa algorithm for CRNS system:}

Start with $s^0_h=0$ and $( \eta^0_h,\sigmaa^0_h, \u^0_h)$  as the solutions of
\begin{align}
	(\eta^0_h,r_h)=(\eta^0,r_h), \quad (\sigma^0_h,\w_h)=(\sigma^0,\w_h), \quad (\u^0_h,\v_h)=(\u^0,\v_h), \\
	 \quad \forall (r_h,\w_h,\v_h) \in Q_h \times  \X_h \times \Y_h. \notag
\end{align}

\textbf{Step 1:} Find $(\eta_h^{n+1},\sigma^{n+1}_h,\hat{\u}^{n+1}_h) \in  Q_h \times  \X_h \times \Y_h$ as the solution of 
\begin{align}\label{biogu-1}
(D_{\tau}\eta^{n+1}_h ,r_h)-(\hat{\u}_h^{n+1}  \eta_h^{n}, \nabla r_h) +\mu_1 (\nabla \eta_h^{n+1},\nabla r_h)+(\sigmaa_h^{n+1} \eta_h^{n}, \nabla r_h)=0,
\end{align}
\begin{align}\label{biogu-2}
(D_{\tau} \sigmaa^{n+1}_h ,\w_h )-(\hat{\u}_h^{n+1} \cdot \sigmaa_h^{n}, \nabla \cdot \w_h) + \mu_2 ( \nabla \cdot \sigmaa_h^{n+1},\nabla \cdot \w_h)\\+ \mu_2 ( \nabla \times \sigmaa_h^{n+1},\nabla \times \w_h)+(\sigmaa_h^{n+1},\w_h)
-(\eta_h^{n} \nabla \eta_h^{n+1},\w_h)=0,\notag
\end{align}
\begin{align}\label{biogu-3}
	(\frac{\hat{\u}_h^{n+1}-\u^n_h}{\tau},\v_h)+b(\u_h^n,  \hat{\u}_h^{n+1},\v_h) +\mu_3(\nabla \hat{\u}_h^{n+1},\nabla \v_h) -\mu_3(s^n_h,\nabla \cdot \v_h)\\
	+ (\eta^{n}_h \nabla \eta_h^{n+1},\v_h)+(\sigmaa_h^{n} (\nabla \cdot \sigmaa_h^{n+1}),\v_h)=0 \notag
\end{align}

for all $(r_h,\w_h,\v_h) \in Q_h \times  \X_h \times \Y_h. $

\textbf{Step 2:}  Find $\rho^{n+1}_h \in M_h$ as the solution of 
\begin{align}\label{biogu-4}
	(\nabla \rho_h^{n+1},\nabla \psi_h)=(\nabla \cdot \hat{\u}_h^{n+1},\psi_h), \quad \forall \, \psi_h \in M_h.
\end{align}

\textbf{Step 3:} Update $s^{n+1}_h \in M_h$ according to 
\begin{align}\label{biogu-5}
	(s^{n+1}_h,q_h)=(s^n_h,q_h)-(\nabla \cdot \hat{\u}_h^{n+1},q_h), \quad \forall \, q_h \in M_h.
\end{align}

\textbf{Step 4:} Update $\u^{n+1}_h$$\in \textbf{Y}_h \oplus M_h $ according to   
\begin{align}\label{biogu-6}
\u_h^{n+1}=\hat{\u}_h^{n+1}+\nabla \rho_h^{n+1}
\end{align}

\textbf{Step 5:} Computed $p^{n+1}_h \in M_h$ via 
\begin{align}\label{biogu-7}
p_h^{n+1}=\mu_3 s_h^{n+1} -\frac{1}{\tau} \rho_h^{n+1}
\end{align}

\begin{remark}
	We note that $\mathbf{u}_h^{n+1}$ is a discontinuous function across interelement boundaries and that, in light of (\ref{biogu-4}) and (\ref{biogu-6}), $\mathbf{u}_h^{n+1}$ is discrete divergence-free in the sense that \cite{pyo2013}
	\begin{align}\label{biogu-divergencefree}
	(\mathbf{u}_h^{n+1}, \nabla \psi_h) = 0, \quad \forall \psi_h \in M_h.
	\end{align}
	
	The First-order Gauge--Uzawa algorithm employs a first-order back Euler difference formula for the temporal derivative and a semi-implicit treatment for the convection term, which effectively eliminates the need for any artificial boundary conditions on the pressure. 
	
	Once the first-order Gauge-Uzawa algorithm (\ref{biogu-1})--(\ref{biogu-7}) is solved, we can recover $c^{n+1}_h$ from
	\begin{align}
			\frac{c_h^{n+1}-c_h^n}{\tau}+\u_h^{n}\cdot \sigmaa_h^{n+1}-\mu_2 \nabla \cdot \sigmaa_h^{n+1} +c_h^{n+1}-\frac{ (\eta^n_h)^2}{2}=0,
	\end{align}
	which is equivalent to 
	\begin{align}
			\frac{c_h^{n+1}-c_h^n}{\tau}+\u_h^{n}\cdot \nabla c_h^{n+1}-\mu_2 \Delta c_h^{n+1}+c_h^{n+1}-\frac{(\eta^n_h)^2}{2}=0.
	\end{align}
\end{remark}

\textbf{Second-order Gauge--Uzawa algorithm for CRNS system:}


Compute $\eta^1_h,\sigmaa^1_h,\u^1_h,p^1_h$ via first-order scheme (\ref{biogu-1})--(\ref{biogu-7}) and set $\rho_h^1=\frac{-2\tau}{3}p_h^1$ and $s_h^1=0$.

\textbf{Step 1:} Find $(\eta^{n+1}_h,\sigmaa^{n+1}_h,\hat{\u}^{n+1}_h) \in  Q_h \times  \X_h \times \Y_h$ as the solutions of 
\begin{align}\label{biogubdf-1}
	(\frac{3\eta^{n+1}_h-4\eta^{n}_h+\eta_h^{n-1}}{2\tau},r_h)-(\hat{\u}_h^{n+1}  \bar{\eta}_h^{n}, \nabla r_h) +\mu_1 (\nabla \eta_h^{n+1},\nabla r_h)+(\sigmaa_h^{n+1} \bar{\eta}_h^{n}, \nabla r_h)=0,
\end{align}
\begin{align}\label{biogubdf-2}
	(\frac{3\sigmaa^{n+1}_h-4\sigmaa^{n}_h+\sigmaa_h^{n-1}}{2\tau},\w_h )-(\hat{\u}_h^{n+1} \cdot \bar{\sigmaa}_h^{n}, \nabla \cdot \w_h) + \mu_2 ( \nabla \cdot \sigmaa_h^{n+1},\nabla \cdot \w_h)\\+\mu_2 ( \nabla \times \sigmaa_h^{n+1},\nabla \times \w_h)+(\sigmaa_h^{n+1},\w_h)
	-(\bar{\eta}_h^{n} \nabla \eta^{n+1}_h,\w_h)=0,\notag
\end{align}
\begin{align}\label{biogubdf-3}
	(\frac{3\hat{\u}_h^{n+1}-4\u^n_h+\u_h^{n-1}}{2\tau},\v)+b(\bar{\u}_h^n,  \hat{\u}_h^{n+1},\v_h) +\mu_3(\nabla \hat{\u}_h^{n+1},\nabla \v_h) -(p^n_h,\nabla \cdot \v_h)\\
	+ (\bar{\eta}_h^{n} \nabla \eta^{n+1}_h,\v)+(\bar{\sigmaa}_h^{n} (\nabla \cdot \sigmaa_h^{n+1}),\v_h)=0 \notag,
\end{align}

for all $(r_h,\w_h,\v_h) \in Q_h \times  \X_h \times \Y_h. $

\textbf{Step 2:}  Find $\rho^{n+1}_h \in M_h$ as the solution of 
\begin{align}\label{biogubdf-4}
	(\nabla \rho_h^{n+1},\nabla \psi_h)=(\nabla \rho^{n}_h,\nabla \psi_h)+(\nabla \cdot \hat{\u}_h^{n+1},\psi_h), \quad \forall \, \psi_h \in M_h.
\end{align}

\textbf{Step 3:} Update $s^{n+1}_h \in M_h$ according to 
\begin{align}\label{biogubdf-5}
	(s^{n+1}_h,q_h)=(s_h^n,q_h)-(\nabla \cdot \hat{\u}_h^{n+1},q_h), \quad \forall \, q_h \in M_h.
\end{align}

\textbf{Step 4:} Update $\u^{n+1}_h\in \textbf{Y}_h \oplus M_h $ according to   
\begin{align}\label{biogubdf-6}
	\u^{n+1}_h=\hat{\u}_h^{n+1}+\nabla (\rho_h^{n+1}-\rho^{n}_h)
\end{align}

\textbf{Step 5:} Computed $p_h^{n+1} \in M_h$ via 
\begin{align}\label{biogubdf-7}
	p^{n+1}_h=\mu_3 s_h^{n+1} -\frac{3}{2\tau} \rho_h^{n+1}
\end{align}

\begin{remark}
The second-order Gauge–Uzawa algorithm utilizes a second-order backward difference formula for the temporal derivative and adopts a semi-implicit treatment for the convection term, which effectively removes the necessity of imposing any artificial boundary conditions on the pressure. We restrict our analysis to the stability, unique solvability, and convergence of the first-order Gauge–Uzawa algorithm, because the corresponding arguments for the second-order scheme can be derived in a similar manner.
\end{remark}

 \subsection{Unconditional stability}
 Now we will prove  the First-order  Gauge-Uzawa algorithm is unconditionally energy stable by the following theorem.
 \begin{theorem}
 	The discrete scheme (\ref{biogu-1})--(\ref{biogu-7}) is an unconditionally energy stable and satisfy the following energy decaying property:
 	\begin{align}\label{biogu-20}
 		\mathcal{E}^3(\eta^{n+1}_h, \sigmaa^{n+1}_h,\u^{n+1}_h,s^{n+1}_h) \leq \mathcal{E}^3(\eta^{n}_h, \sigmaa^{n}_h,\u^{n}_h.s^n_h)
 	\end{align}
 	for all $0\leq n\leq N-1$, where the energy functional is defined by 
 	\begin{align}\label{biogu-energy}
 		\mathcal{E}^3(\eta^{n}_h, \sigmaa^{n}_h,\u^{n}_h,s^n_h)=\| \eta^{n}_h\|^2_{L^2}+\| \sigmaa^{n}_h\|^2_{L^2}+\| \u^{n}_h\|^2_{L^2}+\|s^n_h\|^2_{L^2}.
 	\end{align}
 \end{theorem}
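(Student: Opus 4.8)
The plan is to derive a discrete energy identity by testing the three equations of Step~1 against the unknowns themselves, suitably scaled, and then to dispose of the Gauge--Uzawa projection contributions using the identities in Steps~2--5. Concretely, I would choose $r_h = 2\tau\eta_h^{n+1}$ in \eqref{biogu-1}, $\w_h = 2\tau\sigmaa_h^{n+1}$ in \eqref{biogu-2}, and $\v_h = 2\tau\hat{\u}_h^{n+1}$ in \eqref{biogu-3}. For every backward-difference term I apply the polarization identity $2(a-b,a)=\|a\|_{L^2}^2-\|b\|_{L^2}^2+\|a-b\|_{L^2}^2$, which turns the time-derivative terms into differences of squared norms building $\mathcal{E}^3$ together with the nonnegative numerical-dissipation remainders $\|\eta_h^{n+1}-\eta_h^n\|_{L^2}^2$, $\|\sigmaa_h^{n+1}-\sigmaa_h^n\|_{L^2}^2$ and $\|\hat{\u}_h^{n+1}-\u_h^n\|_{L^2}^2$. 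The diffusion terms directly yield the nonnegative quantities $2\tau\mu_1\|\nabla\eta_h^{n+1}\|_{L^2}^2$, $2\tau\mu_2\|\nabla\sigmaa_h^{n+1}\|_{L^2}^2$, $2\tau\|\sigmaa_h^{n+1}\|_{L^2}^2$ and $2\tau\mu_3\|\nabla\hat{\u}_h^{n+1}\|_{L^2}^2$, while the trilinear term drops out because $b(\u_h^n,\hat{\u}_h^{n+1},\hat{\u}_h^{n+1})=0$ by \eqref{biogu-trilinear1}.

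The algebraic heart of the argument is that all convective and coupling terms cancel in pairs once the three tested equations are summed, and I would verify the three cancellations explicitly. The fluid convection $-2\tau(\hat{\u}_h^{n+1}\eta_h^n,\nabla\eta_h^{n+1})$ from \eqref{biogu-1} cancels the density feedback $+2\tau(\eta_h^n\nabla\eta_h^{n+1},\hat{\u}_h^{n+1})$ from \eqref{biogu-3}; the cross-diffusion $+2\tau(\sigmaa_h^{n+1}\eta_h^n,\nabla\eta_h^{n+1})$ from \eqref{biogu-1} cancels $-2\tau(\eta_h^n\nabla\eta_h^{n+1},\sigmaa_h^{n+1})$ from \eqref{biogu-2}; and the convection $-2\tau(\hat{\u}_h^{n+1}\cdot\sigmaa_h^n,\nabla\cdot\sigmaa_h^{n+1})$ from \eqref{biogu-2} cancels $+2\tau(\sigmaa_h^n(\nabla\cdot\sigmaa_h^{n+1}),\hat{\u}_h^{n+1})$ from \eqref{biogu-3}. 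These exact cancellations are precisely why the coupling terms were discretized consistently with the convective ones, and checking them is the bookkeeping core of the proof.

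Next I would process the projection terms. Combining \eqref{biogu-4} with the splitting \eqref{biogu-6} gives the discrete divergence-free identity \eqref{biogu-divergencefree}, namely $(\u_h^{n+1},\nabla\psi_h)=0$ for all $\psi_h\in M_h$; choosing $\psi_h=\rho_h^{n+1}$ yields the orthogonality $(\u_h^{n+1},\nabla\rho_h^{n+1})=0$, whence $\|\hat{\u}_h^{n+1}\|_{L^2}^2=\|\u_h^{n+1}\|_{L^2}^2+\|\nabla\rho_h^{n+1}\|_{L^2}^2$. This replaces the intermediate velocity by the projected one and contributes the additional nonnegative term $\|\nabla\rho_h^{n+1}\|_{L^2}^2$. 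The gauge term is treated through \eqref{biogu-5}: since $s_h^n\in M_h$, the update gives $(\nabla\cdot\hat{\u}_h^{n+1},s_h^n)=(s_h^n-s_h^{n+1},s_h^n)$, and a further polarization turns $-2\tau\mu_3(s_h^n,\nabla\cdot\hat{\u}_h^{n+1})$ into the telescoping contribution $\tau\mu_3\bigl(\|s_h^{n+1}\|_{L^2}^2-\|s_h^n\|_{L^2}^2\bigr)$ minus $\tau\mu_3\|s_h^{n+1}-s_h^n\|_{L^2}^2$. Thus the accumulated variable augments the Lyapunov functional to $\mathcal{E}^3(\eta_h^n,\sigmaa_h^n,\u_h^n)+\tau\mu_3\|s_h^n\|_{L^2}^2$ (with $s_h^0=0$), and it is this augmented energy whose monotone decay will emerge.

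The main obstacle, and the final step, is to control the sign-indefinite remainder $-\tau\mu_3\|s_h^{n+1}-s_h^n\|_{L^2}^2$. Here I would use that, by \eqref{biogu-5}, $s_h^{n+1}-s_h^n$ is the $L^2$-orthogonal projection of $-\nabla\cdot\hat{\u}_h^{n+1}$ onto $M_h$, so $\|s_h^{n+1}-s_h^n\|_{L^2}\le\|\nabla\cdot\hat{\u}_h^{n+1}\|_{L^2}$, and then invoke the Div--grad inequality to bound $\|\nabla\cdot\hat{\u}_h^{n+1}\|_{L^2}\le\|\nabla\hat{\u}_h^{n+1}\|_{L^2}$. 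Consequently $2\tau\mu_3\|\nabla\hat{\u}_h^{n+1}\|_{L^2}^2-\tau\mu_3\|s_h^{n+1}-s_h^n\|_{L^2}^2\ge\tau\mu_3\|\nabla\hat{\u}_h^{n+1}\|_{L^2}^2\ge0$, so the troublesome term is absorbed into the viscous dissipation. Collecting everything, all remaining dissipation terms are nonnegative and the identity reduces to $\mathcal{E}^3(\eta_h^{n+1},\sigmaa_h^{n+1},\u_h^{n+1})+\tau\mu_3\|s_h^{n+1}\|_{L^2}^2\le\mathcal{E}^3(\eta_h^{n},\sigmaa_h^{n},\u_h^{n})+\tau\mu_3\|s_h^{n}\|_{L^2}^2$, which is the asserted decay (carrying along the nonnegative gauge contribution). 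I expect the genuinely delicate points to be the correct pairing of the coupling terms, so that no uncontrolled cubic term survives, and the Div--grad absorption of the $s_h$-increment, rather than the routine norm manipulations.
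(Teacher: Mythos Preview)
Your proposal is correct and follows essentially the same route as the paper: the same test functions $r_h=2\tau\eta_h^{n+1}$, $\w_h=2\tau\sigmaa_h^{n+1}$, $\v_h=2\tau\hat{\u}_h^{n+1}$, the same pairwise cancellations of the coupling terms, the same use of \eqref{biogu-5} to telescope the gauge variable, and the same Div--grad absorption of $\|s_h^{n+1}-s_h^n\|_{L^2}^2$ into the viscous dissipation. Your observation that what actually emerges is the decay of the \emph{augmented} functional $\mathcal{E}^3+\tau\mu_3\|s_h^n\|_{L^2}^2$ (with $s_h^0=0$) is exactly what the paper's final inequality \eqref{biogu-12} records as well.
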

 
\begin{proof}
Choosing $ r_h=2\tau \eta_h^{n+1}$ in (\ref{biogu-1}), it follows that
\begin{align}\label{biogu-8}
	\| \eta^{n+1}_h\|^2_{L^2}-\| \eta^{n}_h\|^2_{L^2}+\| \eta^{n+1}_h-\eta^{n}_h\|^2_{L^2}- 2\tau (\hat{\u}^{n+1}_h \eta^n_h, \nabla \eta^{n+1}_h)\notag\\
	+ 2 \mu_1 \tau \| \nabla \eta^{n+1}_h\|^2_{L^2} + 2\tau (\sigmaa_h^{n+1}\eta^n_h,\nabla \eta^{n+1}_h)=0.
\end{align}

Setting $\w_h=2\tau\sigmaa^{n+1}_h$ in (\ref{biogu-2}), we deduce that 
\begin{align}\label{biogu-9}
		\| \sigmaa^{n+1}_h\|^2_{L^2}-\| \sigmaa^{n}_h\|^2_{L^2}+\| \sigmaa^{n+1}_h-\sigmaa^{n}_h\|^2_{L^2}-2\tau(\hat{\u}^{n+1}_h\cdot \sigmaa^n_h,\nabla \cdot \sigmaa^{n+1}_h)\notag\\
		+2 \mu_2 \tau \| \nabla \cdot \sigmaa^{n+1}_h\|^2_{L^2}+2 \mu_2 \tau \| \nabla \times \sigmaa^{n+1}_h\|^2_{L^2}
		 + 2\tau \| \sigmaa^{n+1}_h\|^2_{L^2}-2\tau(\eta^n_h\nabla \eta^{n+1}_h,\sigmaa^{n+1}_h)=0.
\end{align}

Letting $\v_h=2\tau\hat{\u}^{n+1}_h$ in (\ref{biogu-3}), it yields that 
\begin{align}
	2(\hat{\u}^{n+1}_h-\u^n_h,\hat{\u}^{n+1}_h) + 2 \tau b(\u_h^n,  \hat{\u}_h^{n+1},\hat{\u}^{n+1}_h) + 2 \mu_3 \tau \| \nabla \hat{\u}^{n+1}_h\|^2_{L^2}-2\mu_3\tau (s^n_h,\nabla \cdot \hat{\u}^{n+1}_h)\notag\\
	+ 2\tau ( \eta^n_h \nabla \eta^{n+1}_h,\hat{\u}^{n+1}_h)+2\tau(\sigmaa^n_h (\nabla \cdot \sigmaa^{n+1}_h),\hat{\u}^{n+1}_h)=0.
\end{align}

Thanks to (\ref{biogu-6}) and (\ref{biogu-divergencefree}), we derive that
\begin{align}
	(\hat{\u}_h^{n+1} - \u_h^n, \hat{\u}_h^{n+1}) 
	&= (\u_h^{n+1} - \nabla \rho_h^{n+1} - \u_h^n, \u_h^{n+1} - \nabla \rho_h^{n+1}) \notag\\
	&= (\u_h^{n+1} - \u_h^n, \u_h^{n+1}) + (\nabla \rho_h^{n+1}, \nabla \rho_h^{n+1}).
\end{align}

Therefore, there holds that 
\begin{align}\label{biogu-10}
		\| \u^{n+1}_h\|^2_{L^2}-\| \u^{n}_h\|^2_{L^2}+\| \u^{n+1}_h-\u^{n}_h\|^2_{L^2}+ 2 \| \nabla\rho^{n+1}_h\|^2_{L^2}+ 2 \tau b(\u_h^n,  \hat{\u}_h^{n+1},\hat{\u}^{n+1}_h)\notag\\
		 + 2 \mu_3 \tau \| \nabla \hat{\u}^{n+1}_h\|^2_{L^2}
		+ 2\tau ( \eta^n_h \nabla \eta^{n+1}_h,\hat{\u}^{n+1}_h)+2\tau(\sigmaa^n_h (\nabla \cdot \sigmaa^{n+1}_h),\hat{\u}^{n+1}_h)=2\mu_3\tau (s^n_h,\nabla \cdot \hat{\u}^{n+1}_h).
\end{align}
 
 Using (\ref{biogu-5}), we deduce that 
 \begin{align}\label{biogu-11}
 	2(s_h^n, \nabla \cdot\hat{\u}_h^{n+1}) &= 2(s_h^n - s_h^{n+1}, s_h^n)  \notag\\
 	&= \|s_h^n\|_{L^2}^2 - \|s_h^{n+1}\|_{L^2}^2 + \|s_h^n - s_h^{n+1}\|_{L^2}^2.
 \end{align}
 
  Combining (\ref{biogu-5}) with Div-grad inequality, it infers that
  \begin{align}
  	\|s_h^n - s_h^{n+1}\|_{L^2}^2 \leq \| \nabla \cdot \hat{\u}^{n+1}_h\|^2_{L^2} \leq \| \nabla \hat{\u}^{n+1}_h\|^2_{L^2}.
  	  \end{align}
  
 Taking sum of (\ref{biogu-8}), (\ref{biogu-9}) and (\ref{biogu-10}) and noticing that (\ref{biogu-trilinear1}), it yields that
 \begin{align}\label{biogu-12}
 	&	\| \eta^{n+1}_h\|^2_{L^2}-\| \eta^{n}_h\|^2_{L^2}+\| \eta^{n+1}_h-\eta^{n}_h\|^2_{L^2}+2 \mu_1 \tau \| \nabla \eta^{n+1}_h\|^2_{L^2} \\
 		&+	\| \sigmaa^{n+1}_h\|^2_{L^2}-\| \sigmaa^{n}_h\|^2_{L^2}+\| \sigmaa^{n+1}_h-\sigmaa^{n}_h\|^2_{L^2}+2 \mu_2 \tau \| \nabla \cdot  \sigmaa^{n+1}_h\|^2_{L^2}\notag \\
 		&+2 \mu_2 \tau \| \nabla \times  \sigmaa^{n+1}_h\|^2_{L^2} + 2\tau \| \sigmaa^{n+1}_h\|^2_{L^2}+\| \u^{n+1}_h\|^2_{L^2}-\| \u^{n}_h\|^2_{L^2} \notag \\
 		&
 		+\| \u^{n+1}_h-\u^{n}_h\|^2_{L^2}+ 2 \| \nabla\rho^{n+1}_h\|^2_{L^2}+ \mu_3 \tau \| \nabla \hat{\u}^{n+1}_h\|^2_{L^2}
 		+\|s_h^{n+1}\|_0^2  - \|s_h^n\|_0^2 \leq 0.\notag
 \end{align}
 from which (\ref{biogu-20}) follows immediately.
\end{proof}

 \subsection{Existence and uniqueness of the First-order Gauge-Uzawa algorithm}
 
 In the following, we prove the existence and uniqueness of the numerical solutions for the First-order Gauge-Uzawa algorithm (\ref{biogu-1})--(\ref{biogu-7}) by the following theorem.
 \begin{theorem}
 	Let $\eta,\sigmaa, \u$ and $p$ are the solutions of the CRNS system (\ref{biosav-rremodel}), then for  $0 \leq n \leq N-1$, the finite element discrete scheme (\ref{biogu-1})--(\ref{biogu-7}) admit the unique solution $(\eta^{n+1}_h,\sigmaa^{n+1}_h,\u^{n+1}_h,p^{n+1}_h) $ without any condition on the time step $\tau$ and mesh size $h$.
 \end{theorem}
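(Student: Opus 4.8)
The plan is to exploit the linearity of the scheme at each time level and to treat the five steps in order, since Steps~2--5 are essentially explicit once Step~1 is solved. First I would note that Step~1, namely \eqref{biogu-1}--\eqref{biogu-3}, is a \emph{square} linear system for the triple $(\eta_h^{n+1},\sigmaa_h^{n+1},\hat{\u}_h^{n+1})\in Q_h\times\X_h\times\Y_h$: the previous-level quantities $\eta_h^n,\sigmaa_h^n,\u_h^n,s_h^n$ enter only as right-hand-side data or as fixed \emph{coefficients} multiplying the unknowns (for instance $\eta_h^n$ in $(\sigmaa_h^{n+1}\eta_h^n,\nabla r_h)$ and $\u_h^n$ in $b(\u_h^n,\hat{\u}_h^{n+1},\v_h)$), so no nonlinear term in the unknowns appears. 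Because the trial and test spaces coincide, the system is square in a finite-dimensional space, and hence it is uniquely solvable if and only if the associated homogeneous problem admits only the zero solution.

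To prove this injectivity I would test the homogeneous version of \eqref{biogu-1}--\eqref{biogu-3} (obtained by dropping the source contributions $\frac{1}{\tau}(\eta_h^n,r_h)$, $\frac{1}{\tau}(\sigmaa_h^n,\w_h)$, $\frac{1}{\tau}(\u_h^n,\v_h)$ and $\mu_3(s_h^n,\nabla\cdot\v_h)$) with $r_h=\eta_h^{n+1}$, $\w_h=\sigmaa_h^{n+1}$, $\v_h=\hat{\u}_h^{n+1}$ and add the three resulting identities. This is precisely the diagonal testing already used in the stability proof: the convective term drops out through $b(\u_h^n,\hat{\u}_h^{n+1},\hat{\u}_h^{n+1})=0$, while the six coupling terms cancel in three pairs, since $(\sigmaa_h^{n+1}\eta_h^n,\nabla\eta_h^{n+1})=(\eta_h^n\nabla\eta_h^{n+1},\sigmaa_h^{n+1})$, $(\hat{\u}_h^{n+1}\eta_h^n,\nabla\eta_h^{n+1})=(\eta_h^n\nabla\eta_h^{n+1},\hat{\u}_h^{n+1})$ and $(\hat{\u}_h^{n+1}\cdot\sigmaa_h^n,\nabla\cdot\sigmaa_h^{n+1})=(\sigmaa_h^n(\nabla\cdot\sigmaa_h^{n+1}),\hat{\u}_h^{n+1})$. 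What survives is
\begin{align*}
\frac{1}{\tau}\Big(\|\eta_h^{n+1}\|_{L^2}^2+\|\sigmaa_h^{n+1}\|_{L^2}^2+\|\hat{\u}_h^{n+1}\|_{L^2}^2\Big)+\mu_1\|\nabla\eta_h^{n+1}\|_{L^2}^2+\mu_2\|\nabla\sigmaa_h^{n+1}\|_{L^2}^2+\|\sigmaa_h^{n+1}\|_{L^2}^2+\mu_3\|\nabla\hat{\u}_h^{n+1}\|_{L^2}^2=0.
\end{align*}
Since $\tau>0$ and $\mu_1,\mu_2,\mu_3>0$, every summand is nonnegative, whence $\eta_h^{n+1}=\sigmaa_h^{n+1}=\hat{\u}_h^{n+1}=0$. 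No smallness of $\tau$ or $h$ is used anywhere, which is exactly what yields the unconditional solvability.

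It then remains to dispatch Steps~2--5, which are sequential. With $\hat{\u}_h^{n+1}$ now fixed, \eqref{biogu-4} is a discrete Poisson problem for $\rho_h^{n+1}\in M_h$; the bilinear form $(\nabla\rho_h,\nabla\psi_h)$ is coercive on the zero-mean space $M_h$ by the discrete Poincar\'e inequality (if $\nabla\rho_h=0$ then $\rho_h$ is constant, and $\int_\Omega\rho_h\,dx=0$ forces $\rho_h=0$), so Lax--Milgram gives a unique $\rho_h^{n+1}$. Equation \eqref{biogu-5} then determines $s_h^{n+1}$ uniquely as an $L^2$-projection onto $M_h$ via the invertible mass matrix, and \eqref{biogu-6}--\eqref{biogu-7} define $\u_h^{n+1}$ and $p_h^{n+1}$ explicitly. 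I expect the only genuinely delicate point to be verifying the three coupling cancellations in Step~1 together with the square/finite-dimensional reduction; since that cancellation is the same mechanism underpinning the energy stability, the whole argument closes with no restriction on $\tau$ or $h$.
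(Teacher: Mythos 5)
Your proposal is correct and follows essentially the same route as the paper: reduce Step~1 to a square finite-dimensional linear system, test the homogeneous problem diagonally with $(\eta_h^{n+1},\sigmaa_h^{n+1},\hat{\u}_h^{n+1})$ so that the convective term vanishes by skew-symmetry and the coupling terms cancel in pairs, conclude the trivial kernel, and then resolve Steps~2--5 sequentially. Your treatment of Steps~2--5 (Lax--Milgram for the discrete Poisson problem and the invertible mass matrix) is merely more explicit than the paper's one-line remark.
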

 \begin{proof}
 	The unique solvability of the discrete scheme is established by show that its corresponding homogeneous linear system only has the trival solution, similar techniques has been applied in \cite{pan2004,libuyang2022,libuyang2024,lichenyangdensity2025}. If $(\eta^n_h,\sigmaa^n_h,\u^n_h,p^n_h)$ are given for $k=0,1,2,\dots,n$, then the algorithm (\ref{biogu-1})--(\ref{biogu-7}) has a unique solution if and only if the corresponding homogeneous linear equation
 	\begin{align}
 		\frac{1}{\tau} ( A,r_h)-(\textbf{C} \eta_h^{n}, \nabla r_h) +\mu_1 (\nabla A,\nabla r_h)+(\textbf{B} \eta_h^{n}, \nabla r_h)=0,\\
 			\frac{1}{\tau} ( \textbf{B},\w_h)-(\textbf{C}\cdot \sigmaa_h^{n}, \nabla \cdot \w_h) 
 			+ \mu_2 ( \nabla \cdot \textbf{B},\nabla \cdot  \w_h)\\
 			+\mu_2 ( \nabla \times \textbf{B},\nabla \times  \w_h)+(\textbf{B},\w_h)-(\eta_h^{n} \nabla A,\w_h)=0,\notag \\
 				\frac{1}{\tau} ( \textbf{C},\v_h)+ 2 \tau b(\u_h^n,  \textbf{C},\v_h)+ \mu_3(\nabla \textbf{C},\nabla \v_h)
 				+ (\eta^{n}_h \nabla A,\v_h)+(\sigmaa_h^{n} (\nabla \cdot \textbf{B}),\v_h)=0,
 	\end{align}
 only has  zero solutions $(A,\textbf{B},\textbf{C})=(0,\textbf{0},\textbf{0})$. Setting $(r_h,\w_h,\v_h)=(A,\textbf{B},\textbf{C})$, we can have
  	\begin{align}
 	\frac{1}{\tau} \|A\|_{L^2}^2-(\textbf{C} \eta_h^{n}, \nabla A) +\mu_1  \| \nabla A\|^2_{L^2}+(\textbf{B} \eta_h^{n}, \nabla A)=0,\\
 	(\frac{1}{\tau} +1)\|\textbf{B}\|_{L^2}^2-(\textbf{C}\cdot \sigmaa_h^{n}, \nabla \cdot \textbf{B}) + \mu_2 \| \nabla \times \textbf{B}\|^2_{L^2}+\mu_2 \| \nabla \cdot \textbf{B}\|^2_{L^2}-(\eta_h^{n} \nabla A,\textbf{B})=0,\\
 	\frac{1}{\tau} \|\textbf{C}\|_{L^2}^2 + \mu_3 \| \nabla \textbf{C}\|^2_{L^2}
 	+ (\eta^{n}_h \nabla A,\textbf{C})+(\sigmaa_h^{n} (\nabla \cdot \textbf{B}),\textbf{C})=0,
 \end{align} 
 
Summing up the above equations, we can obtain 
\begin{align}
	\frac{1}{\tau} (\|A\|_{L^2}^2+\|\textbf{C}\|_{L^2}^2 ) +(\frac{1}{\tau} +1)\|\textbf{B}\|_{L^2}^2\\
	+ \mu_1  \| \nabla A\|^2_{L^2}+	\mu_2 \| \nabla \cdot  \textbf{B}\|^2_{L^2}+\mu_2 \| \nabla \times \textbf{B}\|^2_{L^2}+\mu_3 \| \nabla \textbf{C}\|^2_{L^2}=0.\notag
\end{align}
which implies that $(A,\mathbf{B},\mathbf{C}) = (0,\mathbf{0},\mathbf{0})$. Consequently, the discrete problem admits a unique solution $(\eta^{n+1}_h, \sigma^{n+1}_h, \hat{\mathbf{u}}^{n+1}_h)$. Furthermore, the unique pair $(\mathbf{u}_h^{n+1}, p_h^{n+1})$ can be subsequently determined through \textbf{Steps 2--5}. 
\end{proof}

 \section{Convergence analysis}
  In this section, we present the main theoretical results of this paper.
 
 
At first, we formalize the definition of the Stokes projection $(P_1\u,P_2 p) : (\textbf{Y},M) \rightarrow (\textbf{Y}_h,M_h)$ by 
 \begin{align}
 	\mu_3 ( \nabla (P_1\u-\u),\nabla\v_h  )-(P_2 p-p,\nabla \cdot \v_h)=0, \quad \forall \, \v_h \in \textbf{Y}_n,\\
 	( \nabla \cdot (  P_1\u-\u) , q_h)=0,\quad \forall \, q_h\in M_h.
 \end{align}
 
The Ritz projection $P_3 : Q \rightarrow Q_h$ is defined by 
\begin{align}\label{biocnlf-projection-definition2}
	(   \nabla (\eta -P_3 \eta) , \nabla r_h  ) =0, \quad \forall r_h \in Q_h.
\end{align}

Fortin operator $P_4: \textbf{X}\rightarrow \textbf{X}_h$ is defined by \cite{fortin2000}
\begin{align}
	(   \nabla \times (\sigmaa - P_4 \sigmaa) , \nabla \times \w_h  ) +	(   \nabla \cdot (\sigmaa - P_4 \sigmaa) , \nabla \cdot \w_h  ) =0, \quad \forall \w_h \in \textbf{X}_h.
\end{align}
and there holds \cite{scott2008,heyinnian2015,ravindran2016,girault2012}
\begin{subequations}\label{biogu-projection-error}
\begin{align}
	\|\u-P_1\u \|_{L^2}+h\|p-P_2 p\|_{L^2} \leq& Ch^2 (\|\u\|_{H^2}+\|p\|_{H^1})\\
	\| \eta -P_3 \eta\|_{L^2} + h \| \nabla(\eta-P_3 \eta )\| \leq& C h^2 \| \eta\|_{H^2}, \\
	\| \sigmaa -P_4 \sigmaa \|_{L^2} + h \| \nabla(\sigmaa-P_4 \sigmaa )\| \leq& C h^2 \| \sigmaa\|_{H^2}.
\end{align}
\end{subequations}

The exact solution at time $t=t_{n+1}$ satisfy
 \begin{align}
 	D_{\tau}\eta^{n+1}+ \nabla \cdot (\u^{n+1} \eta^n)-\mu_1 \Delta \eta^{n+1}-\nabla \cdot ( \eta^n \sigmaa^{n+1})=R^{n+1}_\eta,\label{biogu-timediscrete1}\\
 	D_{\tau}\sigmaa^{n+1} + \nabla (\u^{n+1} \sigmaa^n) - \mu_2 \nabla(\nabla \cdot  \sigmaa^{n+1})+\mu_2\nabla \times \nabla \times \sigmaa^{n+1}+\sigmaa^{n+1}-\eta^n\nabla \eta^{n+1}=R^{n+1}_\sigmaa,\\
 	D_{\tau}\u^{n+1} + \u^n\cdot \nabla  \u^{n+1} -\mu_3 \Delta \u^{n+1} + \eta^n \nabla \eta^{n+1}+ \sigmaa^n(\nabla \cdot \sigmaa^{n+1})+\nabla p^{n+1}=R^{n+1}_\u\label{biogu-timediscrete2},\\
 	\nabla \cdot \u^{n+1}=0.
 \end{align}
 for $n=0,1,\dots,N-1$. 
 where
 \begin{align}
 	R^{n+1}_\eta =D_\tau \eta^{n+1}- 	\eta_t(t_{n+1})+\nabla \cdot (\u^{n+1} \eta^n)-\nabla \cdot (\u^{n+1} \eta^{n+1})\\
 	-\nabla \cdot ( \eta^n \sigmaa^{n+1})+\nabla \cdot ( \eta^{n+1} \sigmaa^{n+1})\notag,\\
 	R^{n+1}_{\sigmaa}=D_\tau 	\sigmaa^{n+1}-\sigmaa_t(t_{n+1})+ \nabla (\u^{n+1} \sigmaa^n)- \nabla (\u^{n+1} \sigmaa^{n+1})-\eta^n \nabla \eta^{n+1}+\eta^{n+1}\nabla \eta^{n+1},\\
 	R^{n+1}_{\u}=D_\tau \u^{n+1}-\u_t(t_{n+1})+\u^n\cdot \nabla \u^{n+1} -\u^{n+1}\cdot  \nabla \u^{n+1} +\eta^n \nabla \eta^{n+1}-\eta^{n+1} \nabla \eta^{n+1}\\
 	+\sigmaa^n (\nabla \cdot \sigmaa^{n+1})-\sigmaa^{n+1}(\nabla \cdot \sigmaa^{n+1}).\notag
 \end{align}
 
  After a straightforward manipulation, we can obtain the weak form of (\ref{biogu-timediscrete1})--(\ref{biogu-timediscrete2}) as 
 \begin{align}
 	(D_{\tau}\eta^{n+1}, r) -(\u^{n+1}  \eta^{n}, \nabla r) +\mu_1 (\nabla \eta^{n+1},\nabla r)+(\sigmaa^{n+1} \eta^{n}, \nabla r)=(R^{n+1}_\eta,r),\quad \forall \, r\in Q,\label{biogu-13}\\
 	( D_{\tau}\sigmaa^{n+1},\w )-(\u^{n+1} \cdot \sigmaa^{n}, \nabla \cdot \w) + \mu_2 ( \nabla  \cdot \sigmaa^{n+1},\nabla \cdot  \w)\\
 	+\mu_2 ( \nabla  \times \sigmaa^{n+1},\nabla \times  \w)+(\sigmaa^{n+1},\w)
 	-(\eta^{n} \nabla \eta^{n+1},\w)=(R^{n+1}_\sigmaa,\w),\quad \forall\, \w\in X,\notag\\
 	(D_{\tau}\u^{n+1},\v)+b(\u^n,  \u^{n+1},\v) +\mu_3(\nabla \u^{n+1},\nabla \v) 
 	+ (\eta^{n} \nabla \eta^{n+1},\v)\notag\\
 	+(\sigmaa^{n} (\nabla \cdot \sigmaa^{n+1}),\v)-(p^{n+1},\nabla \cdot \v)=(R^{n+1}_\u,\v),\quad \, \forall \, \v \in Y, \\
 	(\nabla \cdot \u^{n+1},q)=0,\quad \forall \,  q\in M.\label{biogu-14}
 \end{align}
 
In the next step, we establish an estimate for the truncation error $R^{n+1}_\eta, R^{n+1}_\sigmaa,R^{n+1}_\u$, which is stated in the following Lemma.
 \begin{lemma}
 	Under the regularity assumption (\ref{biogu-regularity}), there holds
\begin{align}\label{biogu-trunction-error}
\tau\sum_{n=1}^{N-1} \big(\| R^{n+1}_\eta\|^2_{L^2}+ \| R^{n+1}_\sigmaa\|^2_{L^2}+ \| R^{n+1}_\u\|^2_{L^2} \big)\leq C \tau^2.
\end{align}
 \end{lemma}
 
 \begin{proof}
 By using the Taylor's formula one has
\begin{align}
	D_{\tau}\u^{n+1}-\u_t(t_{n+1})=-\frac{1}{\tau}\int^{t_{n+1}}_{t_n} (t-t_n) \u_{tt} dt,
\end{align}
we have 
\begin{align}
	(R^{n+1}_\eta,r)=&-\big(   \frac{1}{\tau}\int^{t_{n+1}}_{t_n} (t-t_n) \eta_{tt} dt ,r \big)+\big(\u^{n+1} \int^{t_{n+1}}_{t_n} \eta_t dt, \nabla r \big) \\
	&-\big( \sigmaa^{n+1}   \int^{t_{n+1}}_{t_n} \eta_t dt, \nabla r\big),\notag\\
	(R^{n+1}_\sigmaa,\w)=&-\big(   \frac{1}{\tau}\int^{t_{n+1}}_{t_n} (t-t_n) \sigmaa_{tt} dt ,\w \big)
	+\big(\u^{n+1} \int^{t_{n+1}}_{t_n} \sigmaa_t dt, \nabla \w \big) \\
	&-\big( \int^{t_{n+1}}_{t_n} \eta_t dt \nabla \eta^{n+1},\w\big),\notag\\
	(R^{n+1}_\u,\v)=&-\big(   \frac{1}{\tau}\int^{t_{n+1}}_{t_n} (t-t_n) \u_{tt} dt ,\v\big)-\big( \int^{t_{n+1}}_{t_n} \u_t dt \nabla \u^{n+1},\v\big)\\
	&-\big( \int^{t_{n+1}}_{t_n} \eta_t dt \nabla \eta^{n+1},\v\big)-\big( \int^{t_{n+1}}_{t_n} \sigmaa_t dt \nabla \cdot \sigmaa^{n+1},\v\big)\notag.
\end{align}

According to the regularity assumptions (\ref{biogu-regularity}), we have
\begin{align}
	\| R^{n+1}_\u\|_{L^2} 
	\leq &\frac{C}{\tau} \int^{t_{n+1}}_{t_n} (t-t_n) \u_{tt} dt +C \|\nabla \u^{n+1}\|_{L^{\infty}}  \int^{t_{n+1}}_{t_n} \|\u_t\|_{L^2} dt\notag\\
	&+C\int^{t_{n+1}}_{t_n} \|\eta_t\|_{L^2} dt\| \nabla \eta^{n+1}\|_{L^\infty}+C\int^{t_{n+1}}_{t_n} \|\sigmaa_t\|_{L^2} dt\| \nabla \sigmaa^{n+1}\|_{L^\infty}\notag\\
	\leq& C\tau^{\frac{1}{2}} \big(\int^{t_{n+1}}_{t_n}   \| \u_{tt}\|^2_{L^{2}} +\|\u_t\|^2_{L^2}+ \|\eta_t\|^2_{L^2}+\|\sigmaa_t\|^2_{L^2}dt \big).\notag
\end{align}

Similarly
\begin{align}
	\| R^{n+1}_\eta\|_{L^2} 
	\leq &\frac{C}{\tau} \int^{t_{n+1}}_{t_n} (t-t_n) \eta_{tt} dt +C \| \u^{n+1}\|_{L^{\infty}}  \int^{t_{n+1}}_{t_n} \|\eta_t\|_{L^2} dt\notag\\
	&+C\int^{t_{n+1}}_{t_n} \|\eta_t\|_{L^2} dt\|  \sigmaa^{n+1}\|_{L^{\infty}},\notag\\
\leq& C\tau^{\frac{1}{2}} \big(\int^{t_{n+1}}_{t_n}   \| \eta_{tt}\|^2_{L^2} + \|\eta_t\|^2_{L^2}dt \big).\notag\\
	\| R^{n+1}_\sigmaa\|_{L^2} 
\leq &\frac{C}{\tau} \int^{t_{n+1}}_{t_n} (t-t_n) \sigmaa_{tt} dt +C \| \u^{n+1}\|_{L^{\infty}}  \int^{t_{n+1}}_{t_n} \|\sigmaa_t\|_{L^2} dt\notag\\
&+C\int^{t_{n+1}}_{t_n} \|\eta_t\|_{L^2} dt\|  \nabla \eta^{n+1}\|_{L^{\infty}},\notag\\
\leq& C\tau^{\frac{1}{2}} \big(\int^{t_{n+1}}_{t_n}   \| \sigmaa_{tt}\|^2_{L^2} +\|\sigmaa_t\|^2_{L^2}+ \|\eta_t\|^2_{L^2}dt \big).\notag
\end{align}
 from which (\ref{biogu-trunction-error}) follows immediately.
\end{proof}

 Denote 
 \begin{align*}
 	&\theta_\u^i= \u^i-P_1 \u^i,\quad e_\u^i=P_1\u^i-\u^i_h,\quad \hat{e}_\u^i=P_1\u^i-\hat{\u}^i_h, \\
 	&\theta_p^i= p^i-P_2 p^i,\quad e_p^i=P_2 p^i-p^i_h,\\
 	 	&\theta_\eta^i= \eta^i-P_3 \eta^i,\quad e_\eta^i=P_3\eta^i-\eta^i_h,\\
 	&\theta_\sigmaa^i= \sigmaa^i-P_4 \sigmaa^i,\quad e_\eta^i=P_4\sigmaa^i-\sigmaa^i_h.
 \end{align*}

\subsection{Reduced rate of convergence}
  \begin{theorem}\label{biogu-error-lemma}
 	Let $(\eta^i, \sigmaa^i, \mathbf{u}^i)$ denote the exact solutions of the continuous system \eqref{biosav-rremodel}, and let $(\eta_h^i, \sigmaa_h^i, \mathbf{u}_h^i)$ be the corresponding discrete solutions obtained from the numerical scheme \eqref{biogu-1}--\eqref{biogu-7}. Then, there exist positive constants $\tau_1$ and $h_1$ such that, for all $\tau \le \tau_1$ and $h \le h_1$, the following error estimates hold:
 	\begin{align}\label{biogu-19}
 		\|e^i_\eta\|^2_{L^2}+	\|e^i_\sigmaa\|^2_{L^2}+	\|e^i_\u\|^2_{L^2}\leq C_1(\tau + h^4),\quad \forall \, 1\leq i \leq N.
 	\end{align}
 	where $C_1$ is a positive constant independent of $i$, $h$, and $\tau$.
 \end{theorem}

 \subsection{The proof for $i=1$}
 Setting $n=0$ and taking the difference between  (\ref{biogu-1})--(\ref{biogu-7}) and (\ref{biogu-13})--(\ref{biogu-14}), we have the following error equation:
 \begin{align}
 	&	(D_{\tau} e_\eta^{1},r_h)+\mu_1(\nabla e^{1}_\eta,\nabla r_h)\label{biogu-0errorequation1}
 	\\
 	=&-(D_{\tau} \theta_\eta^{1},r_h) +(\u^{1}  \eta^{0}, \nabla r_h)-(\hat{\u}^{1}_h  \eta^{0}_h, \nabla r_h)\notag
 	\\
 	&-(\sigmaa^{1} \eta^{0}, \nabla r_h)
 	+(\sigmaa^{1}_h \eta^{0}_h, \nabla r_h)
 	+(R^{1}_\eta,r_h), \quad  \forall \, r_h \in Q_h,\notag\\
 	&	(D_{\tau} e_\sigmaa^{1},\w_h)+\mu_2(\nabla \cdot e^{1}_\sigmaa,\nabla \cdot \w_h)+\mu_2(\nabla \times e^{1}_\sigmaa,\nabla \times \w_h)+(e^{1}_\sigmaa,\w_h)\label{biogu-0errorequation2}
 	\\
 	=&-(D_{\tau} \theta_\sigmaa^{1},\w_h) +(\u^{1} \cdot \sigmaa^{0}, \nabla \cdot \w_h) -(\hat{\u}^{1}_h \cdot \sigmaa^{0}_h, \nabla \cdot \w_h) -(\theta^{1}_\sigmaa,\w_h)\notag
 	\\
 	&+(\eta^{0} \nabla \eta^{1},\w_h)-(\eta^{0}_h \nabla \eta^{1}_h,\w_h)+(R^{1}_\sigmaa,\w_h),\quad \forall \w_h \in \textbf{X}_h,\notag\\
 	&(\frac{\hat{e}^{1}_\u-e^0_\u}{\tau},\v_h)+\mu_3(\nabla \hat{e}^{1}_\u,\nabla \v_h)-(p^{1},\nabla \cdot \v_h)+\mu_3(s^0_h,\nabla \cdot \v_h)\label{biogu-0errorequation3}\\
 	=&-(D_{\tau} \theta_\u^{1},\v_h)-b(\u^0,  \u^{1},\v_h)+b(\u^0_h,  \hat{\u}^{1}_h,\v_h)\notag\\
 	&-(\eta^{0} \nabla \eta^{1},\v_h)+(\eta^{0}_h \nabla \eta^{1}_h,\v_h)-(\sigmaa^{0} (\nabla \cdot \sigmaa^{1}),\v_h)\notag\\
 	&+(\sigmaa^{0}_h (\nabla \cdot \sigmaa_h^{1}),\v_h)+(R^{1}_\u,\v_h),\quad \forall \, \v_h\in \textbf{Y}_h.\notag
 \end{align}

 Summing up (\ref{biogu-0errorequation1})--(\ref{biogu-0errorequation3}) and noticing that $s^0_h=0$, denote $e^0_\eta=0, e^0_\sigmaa=\textbf{0}, e^0_\u=\textbf{0}$, it follows that
 \begin{align}
 	(e&^1_\eta,r_h)+(e^1_\sigmaa,\w_h)+(\hat{e}^1_\u,\v_h)+\mu_1\tau (\nabla e^{1}_\eta,\nabla r_h)+\mu_2 \tau (\nabla \cdot e^{1}_\sigmaa,\nabla \cdot \w_h)\\
 	&
 	+\mu_2 \tau (\nabla \times e^{1}_\sigmaa,\nabla \times \w_h)+(e^{1}_\sigmaa,\w_h)+\mu_3\tau(\nabla e^{1}_\u,\nabla \w_h)\notag\\
 =&-\tau(D_{\tau} \theta_\eta^{1},r_h) -\tau(D_{\tau} \theta_\sigmaa^{1},\w_h) -\tau(D_{\tau} \theta_\u^{1},\v_h)
\notag\\
&+\tau(R^{1}_\eta,r_h)+\tau(R^{1}_\sigmaa,\w_h)+\tau(R^{1}_\u,\v_h) +\tau(p^1,\nabla\cdot \v)\notag\\
&+\tau(\u^{1}  \eta^{0}, \nabla r_h)-\tau(\hat{\u}^{1}_h  \eta^{0}_h, \nabla r_h)-\tau(\sigmaa^{1} \eta^{0}, \nabla r_h)
 +\tau(\sigmaa^{1}_h \eta^{0}_h, \nabla r_h)\notag
 \\
 	&+\tau(\u^{1} \cdot \sigmaa^{0}, \nabla \cdot \w_h) -\tau(\hat{\u}^{1}_h \cdot \sigmaa^{0}_h, \nabla \cdot \w_h) -\tau(\theta^{1}_\sigmaa,\w_h)\notag
 \\
 &+\tau(\eta^{0} \nabla \eta^{1},\w_h)-\tau(\eta^{0}_h \nabla \eta^{1}_h,\w_h)-\tau b(\u^0,  \u^{1},\v_h)+\tau b(\u^0_h,  \hat{\u}^{1}_h,\v_h)\notag\\
 &-\tau(\eta^{0} \nabla \eta^{1},\v_h)+\tau(\eta^{0}_h \nabla \eta^{1}_h,\v_h)-\tau(\sigmaa^{0} (\nabla \cdot \sigmaa^{1}),\v_h)+\tau(\sigmaa^{0}_h (\nabla \cdot \sigmaa_h^{1}),\v_h).\notag
 \end{align}

 Thanks to (\ref{biogu-6}) and (\ref{biogu-divergencefree}), we derive that 
 \begin{align}
 	(\hat{e}^{1}_\u,\hat{e}^{1}_\u)=(e^{1}_\u+\nabla \rho^{1}_h,e^{1}_\u+\nabla \rho^{1}_h)\\
 	=(e^{1}_\u,e^{1}_\u)+(\nabla \rho^{1}_h,\nabla \rho^{1}_h).\notag
 \end{align}
 
 Setting $(r_h,\w_h,\v_h)=(e^1_\eta,e^1_\sigmaa,\hat{e}^1_\u)$, it yields that
 \begin{align}\label{biogu-error-0equation}
 	&\|e^1_\eta\|^2_{L^2}+\|e^1_\sigmaa\|^2_{L^2}+\|e^1_\u\|^2_{L^2}+\mu_1\tau \| \nabla e^1_\eta\|^2_{L^2}\\
 	&+\mu_2\tau \| \nabla\cdot  e^1_\sigmaa\|^2_{L^2}+\mu_2\tau \| \nabla \times e^1_\sigmaa\|^2_{L^2}+\mu_3\tau \| \nabla e^1_\u\|^2_{L^2}+\|\nabla \rho^{1}_h\|^2_{L^2}\notag\\
 =&-\tau(D_{\tau} \theta_\eta^{1},e^1_\eta) -\tau(D_{\tau} \theta_\sigmaa^{1},e^1_\sigmaa) -\tau(D_{\tau} \theta_\u^{1},\hat{e}^1_\u)
 \notag\\
 &+\tau(R^{1}_\eta,e^1_\eta)+\tau(R^{1}_\sigmaa,e^1_\sigmaa)+\tau(R^{1}_\u,\hat{e}^1_\u) +\tau(p^1,\nabla\cdot \hat{e}_\u)\notag\\
 &+\tau(\u^{1}  \eta^{0}, \nabla e^1_\eta)-\tau(\hat{\u}^{1}_h  \eta^{0}_h, \nabla e^1_\eta)-\tau(\sigmaa^{1} \eta^{0}, \nabla e^1_\eta)
 +\tau(\sigmaa^{1}_h \eta^{0}_h, \nabla e^1_\eta)\notag
 \\
 &+\tau(\u^{1} \cdot \sigmaa^{0}, \nabla \cdot e^1_\sigmaa) -\tau(\hat{\u}^{1}_h \cdot \sigmaa^{0}_h, \nabla \cdot e^1_\sigmaa) -\tau(\theta^{1}_\sigmaa,e^1_\sigmaa)\notag
 \\
 &+\tau(\eta^{0} \nabla \eta^{1},e^1_\sigmaa)-\tau(\eta^{0}_h \nabla \eta^{1}_h,e^1_\sigmaa)-\tau b(\u^0,  \u^{1},\hat{e}^1_\u)+\tau b(\u^0_h,  \hat{\u}^{1}_h,\hat{e}^1_\u)\notag\\
 &-\tau(\eta^{0} \nabla \eta^{1},\hat{e}^1_\u)+\tau(\eta^{0}_h \nabla \eta^{1}_h,\hat{e}^1_\u)-\tau(\sigmaa^{0} (\nabla \cdot \sigmaa^{1}),\hat{e}^1_\u)+\tau(\sigmaa^{0}_h (\nabla \cdot \sigmaa_h^{1}),\hat{e}^1_\u)\notag\\
 =&\sum_{m=1}^{22} J_m.\notag
 \end{align}
 
 Application of  H\"{o}lder inequality, Young inequality, Lagrange's Mean Value Theorem and projection error (\ref{biogu-projection-error}), there exists $t1,t2,t2\in(t_0,t_1)$ such that
 \begin{align}
 J_1+J_2+J_3=&-\tau(D_{\tau} \theta_\eta^{1},e^1_\eta) -\tau(D_{\tau} \theta_\sigmaa^{1},e^1_\sigmaa) -\tau(D_{\tau} \theta_\u^{1},e^1_\u)\\
 \leq & C \tau ( \| D_\tau \theta^1_\eta\|^2_{L^2}+\| D_\tau \theta^1_\sigmaa\|^2_{L^2}+\| D_\tau \theta^1_\u\|^2_{L^2})\notag\\
& +\nu_1\tau\| \nabla e^1_\eta\|^2_{L^2}+C\tau\| e^1_\sigmaa\|^2_{L^2}+\nu_3\tau\| \nabla e^1_\u\|^2_{L^2}\notag\\
\leq &C \tau h^4\big(\| \eta_t(t_{j1})\|^2_{H^2}+\| \sigmaa_t(t_{j2})\|^2_{H^2}+\| \u_t(t_{j3})\|^2_{H^2}\big)\notag
\\&
+\nu_1\tau\| \nabla e^1_\eta\|^2_{L^2}+C\tau\| e^1_\sigmaa\|^2_{L^2}+\nu_3\tau\| \nabla e^1_\u\|^2_{L^2}.\notag
 \end{align}
 and 
 \begin{align}
 	J_4+J_5+J_6=&\tau(R^{1}_\eta,e^1_\eta)+\tau(R^{1}_\sigmaa,e^1_\sigmaa)+\tau(R^{1}_\u,e^1_\u) \\
 	\leq &C\tau \big( \| R^1_\eta\|^2_{L^2} +\| R^1_\sigmaa\|^2_{L^2} +\| R^1_\u\|^2_{L^2} \big)\notag\\
 	&+\nu_1\tau\| \nabla e^1_\eta\|^2_{L^2}+C\tau\| e^1_\sigmaa\|^2_{L^2}+\nu_3\tau\| \nabla e^1_\u\|^2_{L^2}.\notag
 \end{align}
 
 Thanks to (\ref{biogu-6}) and (\ref{biogu-divergencefree}), it infers that
 \begin{align}
 	J_{7}=\tau(\nabla p^{1},\hat{e}^{1}_\u)=&\tau(\nabla p^{1},e^{1}_\u+\nabla \rho^{1}_h)\\
 	=&\tau(\nabla p^{1},e^{1}_\u)+\tau(\nabla p^{1},\nabla \rho_h^{1})\notag\\
 	\leq& C\tau^2\|\nabla p^{1}\|_{L^2}^2+\nu_4\|\nabla\rho^{1}_h\|^2_{L^2}.\notag
 \end{align}

After a suitable reformulation, using (\ref{biogu-regularity}) and projection error (\ref{biogu-projection-error}) we can obtain
 \begin{align}
 	J_8+J_9=&\tau(\u^{1}  \eta^{0}, \nabla e^1_\eta)-\tau(\u^{1}_h  \eta^{0}_h, \nabla e^1_\eta)\\
 =&\tau\big(   (\eta^0-\eta^0_h) \u^{1},\nabla e^{1}_\eta\big)+\tau\big(\eta^0_h (\u^{1}-\hat{\u}^{1}_h),\nabla e^{1}_\eta\big)\notag\\
 =&\tau\big(   (\eta^0-\eta^0_h) \u^{1},\nabla e^{1}_\eta\big)+\tau\big(\eta^0_h \theta_\u^{1},\nabla e^{1}_\eta\big)+\tau\big(\eta^0_h \hat{e}_\u^{1},\nabla e^{1}_\eta\big)\notag\\
 \leq &C \tau \|\theta^0_\eta\|_{L^2}\|\u^{1}\|_{L^{\infty}} \| \nabla e^{1}_\eta\|_{L^2}+C\tau \|\eta^0_h\|_{L^{\infty}}\|\theta^{1}_\u\|_{L^2}\| \nabla e^{1}_\eta\|_{L^2}+\tau\big(\eta^0_h \hat{e}_\u^{1},\nabla e^{1}_\eta\big),\notag\\
 \leq &C \tau h^4 +\nu_1 \tau \| \nabla e^{1}_\eta\|^2_{L^2}+\tau\big(\eta^0_h \hat{e}_\u^{1},\nabla e^{1}_\eta\big).\notag
 \end{align}
 where we also use (\ref{inverse-inequality}) and 
 \begin{align}\label{biogu-eta-0infty}
	\| \eta^0_h \|_{L^{\infty}} \leq& C ( \| \eta^0\|_{L^{\infty}} + \| \eta^0-\eta^0_h\|_{L^{\infty}}   )\\
	\leq & C \| \eta^0\|_{H^2}+Ch^{-1}\| \theta^0_\eta\|_{L^2}\notag\\
	\leq &C.\notag
\end{align}

After a suitable reformulation, it follows that
 \begin{align}
 	J_{10}+J_{11}=&-\tau(\sigmaa^{1} \eta^{0}, \nabla e^1_\eta)
 	+\tau(\sigmaa^{1}_h \eta^{0}_h, \nabla e^1_\eta)\\
 	=&-\tau\big(   (\eta^0-\eta^0_h) \sigmaa^{1},\nabla e^{1}_\eta\big)-\tau\big(\eta^0_h (\sigmaa^{1}-\sigmaa^{1}_h),\nabla e^{1}_\eta\big)\notag\\
 	\leq &C \tau\|\theta^0_\eta\|_{L^2}\| \sigmaa^1\|_{L^{\infty}}\|\nabla e^1_\eta\|_{L^2}+C\tau \| \eta^0_h\|_{L^{\infty}}\|\theta^1_\sigmaa\|_{L^2}\| \nabla e^1_{\eta}\|_{L^2}-\tau\big( \eta^0_h e^1_\sigmaa, \nabla e^1_\eta\big)\notag\\
 	\leq &	C \tau h^4 +\nu_1 \tau \| \nabla e^{1}_\eta\|^2_{L^2}-\tau\big( \eta^0_h e^1_\sigmaa, \nabla e^1_\eta\big).\notag
 \end{align}

 After a simple rearrangement, in view of  (\ref{biogu-regularity}) and projection error (\ref{biogu-projection-error}), we can obtain
 \begin{align}
J_{12}+J_{13}=& \tau(\u^{1} \cdot \sigmaa^{0}, \nabla \cdot e^1_\sigmaa) -\tau(\hat{\u}^{1}_h \cdot \sigmaa^{0}_h, \nabla \cdot e^1_\sigmaa)\\
 =& \tau\big(   (\sigmaa^0-\sigmaa^0_h) \u^{1},\nabla \cdot e^{1}_\sigmaa\big)+\tau\big(\sigmaa^0_h (\u^{1}-\hat{\u}^{1}_h),\nabla \cdot e^{1}_\sigmaa\big)\notag\\
 \leq &C \tau \|\theta^0_\sigmaa\|_{L^2}\|\u^{1}\|_{L^{\infty}} \| \nabla \cdot e^{1}_\sigmaa\|_{L^2}+C\tau \|\sigmaa^0_h\|_{L^{\infty}}\|\theta^{1}_\u\|_{L^2}\| \nabla \cdot e^{1}_\sigmaa\|_{L^2}+\tau\big(\sigmaa^0_h \hat{e}^{1}_\u,\nabla \cdot e^{1}_\sigmaa\big),\notag\\
 \leq &C \tau h^4 +\nu_2 \tau \| \nabla \cdot e^{1}_\sigmaa\|^2_{L^2}+\tau\big(\sigmaa^0_h \hat{e}^{1}_\u,\nabla \cdot e^{1}_\sigmaa\big).\notag
 \end{align}
  where we also use (\ref{inverse-inequality}) and 
 \begin{align}\label{biogu-sigma-0infty}
 	\| \sigmaa^0_h \|_{L^{\infty}} \leq& C ( \| \sigmaa^0\|_{L^{\infty}} + \| \sigmaa^0-\sigmaa^0_h\|_{L^{\infty}}   )\\
 	\leq & C \| \sigmaa^0\|_{H^2}+Ch^{-1}\| \theta^0_\sigmaa\|_{L^2}\notag\\
 	\leq &C.\notag
 \end{align}
 
 It's easy to see that
 \begin{align}
 	J_{14}=-\tau(\theta^{1}_\sigmaa,e^1_{\sigmaa})\leq C \tau h^4+C \tau \| e^1_\sigmaa\|_{L^2}^2.
 \end{align}
 \begin{align}\label{biogu-015}
 J_{15}+J_{16}=&\tau(\eta^{0} \nabla \eta^{1},e^1_\sigmaa)-\tau(\eta^{n}_h \nabla \eta^{1}_h,e^1_\sigmaa)\\
 	=&\tau\big(   (\eta^0-\eta^0_h) \nabla \eta^{1}, e^{1}_\sigmaa\big)+\tau\big(\eta^0_h \nabla(\eta^{1}-\eta^{1}_h),e^{1}_\sigmaa\big)\notag\\
 	\leq &C \tau \|\theta^0_\eta\|_{L^2}\|\nabla \eta^{1}\|_{L^{\infty}} \| e^{1}_\sigmaa\|_{L^2}+\tau\big(\eta^0_h \nabla e_\eta^{1},e^{1}_\sigmaa\big)+\tau\big(\eta^0_h \nabla \theta_\eta^{1},e^{1}_\sigmaa\big)\notag\\
 	\leq &C \tau h^4 +C \tau \| e^{1}_\sigmaa\|^2_{L^2}+\tau\big(\eta^0_h \nabla e_\eta^{1},e^{1}_\sigmaa\big)+\tau\big(\eta^0_h \nabla \theta_\eta^{1},e^{1}_\sigmaa\big)\notag,
 \end{align}
 
 By projection error (\ref{biogu-projection-error}) and integrate by parts (IBP), we can deduce that
 \begin{align}\label{biogu-016}
 	\tau(\eta^0_h\nabla \theta^{1}_\eta,e^{1}_\sigmaa)=&-\tau(\nabla \eta^0_h \theta^{1}_\eta,e^{1}_\sigmaa)-\tau (\eta^0_h \theta^{1}_\eta, \nabla \cdot e^{1}_\sigmaa)\\
 	\leq &C\tau \| \nabla \eta^0_h\|_{L^\infty}\|\theta^{1}_\eta\|_{L^2} \|e^{1}_\sigmaa\|_{L^2}+C \tau \| \eta^0_h\|_{L^{\infty}} \| \theta^{1}_\eta\|_{L^2}\| \nabla \cdot e^{1}_\sigmaa\|_{L^2}\notag\\
 	\leq & C \tau h^4 +C \tau \| e^1_\sigmaa\|^2_{L^2}+ \nu_2 \tau \| \nabla \cdot e^{1}_\sigmaa\|^2_{L^2},\notag
 \end{align}
 where we also use (\ref{biogu-eta-0infty}), inverse inequalities (\ref{inverse-inequality}) and
 \begin{align}\label{biogu-eta-0w13}
	\| \eta^0_h \|_{{W^{1,\infty}} }\leq& C ( \| \eta^0\|_{{W^{1,\infty}}} + \| \eta^0-\eta^0_h\|_{{W^{1,\infty}} })\\
	\leq & C \| \eta^0\|_{W^{2,4}}+Ch^{-2}\| \theta^0_\eta\|_{L^2}\notag\\
	\leq &C.\notag
\end{align}
 
 Combining (\ref{biogu-015}) and (\ref{biogu-016}), it follows that
 \begin{align}
 	J_{15}+J_{16} \leq C \tau h^4 +C \tau \| e^1_\sigmaa\|^2_{L^2}+\nu_2 \tau \| \nabla \cdot e^{1}_\sigmaa\|^2_{L^2}+\tau\big(\eta^0_h \nabla e_\eta^{1},e^{1}_\sigmaa\big).
 \end{align}
 
 After a straightforward rearrangement, and using (\ref{biogu-trilinear1}), (\ref{biogu-regularity}), together with the projection error (\ref{biogu-projection-error}), we deduce that
 \begin{align}
 	J_{17}+J_{18}=&-\tau b(\u^0,  \u^{1},\hat{e}^1_\u)+\tau b(\u^0_h,  \hat{\u}^{1}_h,\hat{e}^1_\u)\\
 =& -\big( \tau b(\u^0-\u^0_h,\u^{1},\hat{e}^{1}_\u)+\tau b(\u^0_h,\u^{1}-\hat{\u}^{1}_h,\hat{e}^{1}_\u) \big)\notag\\
 =&-\big( \tau b(\u^0-\u^0_h,\u^{1},\hat{e}^{1}_\u)-\tau b(\u^0_h, \hat{e}^{1}_\u, \theta^{1}_\u)\big)\notag\\
 \leq &C \tau \| \theta^0_\u\|_{L^2} \| \nabla \u^{1}\|_{L^3} \| \hat{e}^{1}_\u\|_{L^6}+C\tau\| \u_h^0\|_{L^{\infty}} \|\nabla \hat{e}^{1}_\u\|_{L^2} \| \theta_\u^{1}\|_{L^2}\notag\\
 \leq &C \tau h^4 +\nu_3 \tau \| \nabla \hat{e}^{1}_\u\|^2_{L^2},\notag
 \end{align}
  where we also use (\ref{inverse-inequality}) and 
\begin{align}
	\| \u^0_h \|_{L^{\infty}} \leq& C ( \| \u^0\|_{L^{\infty}} + \| \u^0-\u^0_h\|_{L^{\infty}}   )\\
	\leq & C \| \u^0\|_{H^2}+Ch^{-1}\| \theta^0_\u\|_{L^2}\notag\\
	\leq &C.\notag
\end{align} 

After a straightforward rearrangement and by applying (\ref{biogu-regularity}), (\ref{biogu-eta-0w13}), and the projection error (\ref{biogu-projection-error}), we obtain
\begin{align}\label{biogu-017}
 J_{19}+J_{20}=&-\big(\tau(\eta^{0} \nabla \eta^{1},\hat{e}^{1}_\u)-\tau(\eta^{0}_h \nabla \eta^{1}_h,\hat{e}^{1}_\u)\big)\\
	 =&-\tau\big(   (\eta^0-\eta^0_h) \nabla \eta^{1}, \hat{e}^{1}_\u\big)-\tau\big(\eta^0_h \nabla(\eta^{1}-\eta^{1}_h),\hat{e}^{1}_\u)\big)\notag\\
	\leq &C \tau \|\theta^0_\eta\|_{L^2}\|\nabla \eta^{1}\|_{L^{3}} \|\hat{e}^{1}_\u\|_{L^6}
	-\tau\big(\eta^0_h \nabla \theta_\eta^{1},\hat{e}^{1}_\u\big) -\tau\big(\eta^0_h \nabla e_\eta^{1},\hat{e}^{1}_\u\big)\notag\\
	\leq &C \tau h^4 +\nu_3 \tau \| \nabla \hat{e}^{1}_\u \|^2_{L^2}-\tau\big(\eta^0_h \nabla \theta_\eta^{1},\hat{e}^{1}_\u\big)-\tau\big(\eta^0_h \nabla e_\eta^{1},\hat{e}^{1}_\u\big),\notag
\end{align}

By (\ref{biogu-eta-0w13}), (\ref{biogu-eta-0infty}), projection error (\ref{biogu-projection-error}) and integrate by parts (IBP), we can deduce that
\begin{align}\label{biogu-018}
	-\tau(\eta^0_h\nabla \theta^{1}_\eta,\hat{e}^{1}_\u)=&\tau(\nabla \eta^0_h \theta^{1}_\eta,\hat{e}^{1}_\u)+\tau (\eta^0_h \theta^{1}_\eta, \nabla \hat{e}^{1}_\u)\\
	\leq &C\tau \| \nabla \eta^0_h\|_{L^3}\|\theta^{1}_\eta\|_{L^2} \|\hat{e}^{1}_\u\|_{L^6}+C \tau \| \eta^0_h\|_{L^{\infty}} \| \theta^{1}_\eta\|_{L^2}\| \nabla  \hat{e}^{1}_\u\|_{L^2}\notag\\
	\leq & C \tau h^4 + \nu_3 \tau \| \nabla \hat{e}^{1}_\u\|^2_{L^2},\notag
\end{align}

Combining (\ref{biogu-017}) and (\ref{biogu-018}), it follows that
\begin{align}
	J_{19}+J_{20} \leq C \tau h^4 +\nu_3 \tau \| \nabla \hat{e}^{1}_\u\|^2_{L^2}-\tau\big(\eta^0_h \nabla e_\eta^{1},\hat{e}^{1}_\u\big).
\end{align}

 Furthermore
 \begin{align}
 	J_{21}+J_{22}=&-\tau(\sigmaa^{0} (\nabla \cdot \sigmaa^{1}),\hat{e}^{1}_\u)
 	+\tau(\sigmaa^{0}_h (\nabla \cdot \sigmaa_h^{1}),\hat{e}^{1}_\u)\\
 	=&-\tau \big( (\sigmaa^0-\sigmaa^0_h) \hat{e}^{1}_\u,\nabla \cdot \sigmaa^{1}  \big)  -\tau \big(\sigmaa^0_h \hat{e}^{1}_\u,\nabla \cdot (\sigmaa^{1}-\sigmaa^{1}_h) \big) \notag \\
 	\leq &C \tau\|\theta^0_\sigmaa\|_{L^2}\|\hat{e}^{1}_\u\|_{L^6} \|\nabla \cdot \sigmaa^{1}\|_{L^3}-\tau \big( \sigmaa^0_h \hat{e}^{1}_\u,\nabla \cdot e^{1}_\sigmaa \big)-\tau \big( \sigmaa^0_h \hat{e}^{1}_\u,\nabla \cdot \theta^{1}_\sigmaa \big)\notag\\
 	\leq & C \tau h^4+\nu_3 \tau \| \nabla \hat{e}^1_\u\|^2_{L^2}-\tau \big( \sigmaa^0_h \hat{e}^{1}_\u,\nabla \cdot e^{1}_\sigmaa \big)-\tau \big( \sigmaa^0_h \hat{e}^{1}_\u,\nabla \cdot \theta^{1}_\sigmaa \big).\notag
 \end{align}
 as for $-\tau \big( \sigmaa^0_h \hat{e}^{1}_\u,\nabla \cdot \theta^{1}_\sigmaa \big)$, we can deduce that 
 \begin{align}
 	-\tau \big( \sigmaa^0_h \hat{e}^{1}_\u,\nabla \cdot \theta^{1}_\sigmaa \big)
 	=&\tau \big(\nabla \sigmaa^0_h \hat{e}^{1}_\u, \theta^{1}_\sigmaa \big)
 	+\tau \big( \sigmaa^0_h \nabla \hat{e}^{1}_\u, \theta^{1}_\sigmaa \big)\\
 	\leq& C \tau \| \nabla \sigmaa^0_h \|_{L^3}\|\hat{e}^{1}_\u\|_{L^6}\|\theta^{1}_\sigmaa\|_{L^2}+C\tau\|  \sigmaa^0_h \|_{L^{\infty}}\| \nabla \hat{e}^{1}_\u\|_{L^2}\|\theta^{1}_\sigmaa\|_{L^2}\notag\\
 	\leq &C \tau h^4+\nu_3\tau\| \nabla \hat{e}^{1}_\u\|_{L^2}^2.\nonumber
 \end{align}
  where we also use IBP, (\ref{biogu-sigma-0infty}), inverse inequalities (\ref{inverse-inequality}) and for sufficiently small $h$, such that $Ch^{\frac{2}{3}}\leq C $, we have
 \begin{align}\label{biogu-signa-0w13}
 	\| \sigmaa^0_h \|_{{W^{1,3}} }\leq& C ( \| \sigmaa^0\|_{{W^{1,3}}} + \| \sigmaa^0-\sigmaa^0_h\|_{{W^{1,3}} })\\
 	\leq & C \| \sigmaa^0\|_{H^2}+Ch^{-\frac{4}{3}}\| \theta^0_\sigmaa\|_{L^2}\notag\\
 	\leq &C+Ch^{\frac{2}{3}}\notag\\
 	\leq & C.\notag
 \end{align}
 thus
 \begin{align}
 	J_{21}+J_{22}\leq C \tau h^4+\nu_3 \tau \| \nabla \hat{e}^{1}_\u\|^2_{L^2}-\tau \big( \sigmaa^0_h \hat{e}^{1}_\u,\nabla \cdot e^{1}_\sigmaa \big).
 \end{align}
 
 Furthermore, for sufficiently small $\nu_1,\nu_2,\nu_3,\tau$,  combining the above inequalities with (\ref{biogu-error-0equation}), by using (\ref{biogu-regularity}), (\ref{biogu-trunction-error}), there exists $C_1>0$ such that
 \begin{align}\label{biogu-i=1}
 	&\|e^1_\eta\|^2_{L^2}+\|e^1_\sigmaa\|^2_{L^2}+\|e^1_\u\|^2_{L^2}+\|\nabla \rho^{1}_h\|^2_{L^2}\\
 	&+\mu_1\tau \| \nabla e^1_\eta\|^2_{L^2}+\mu_2\tau \| \nabla \cdot e^1_\sigmaa\|^2_{L^2}+\mu_2\tau \| \nabla \times e^1_\sigmaa\|^2_{L^2}+\mu_3\tau \| \nabla e^1_\u\|^2_{L^2}\notag\\
 	\leq &C \tau h^4(\| \eta_t(t_{j1})\|^2_{H^2}+\| \sigmaa_t(t_{j2})\|^2_{H^2}+\| \u_t(t_{j3}))\|^2_{H^2}\notag\\
 	&+C\tau \big( \| R^1_\eta\|^2_{L^2} +\| R^1_\sigmaa\|^2_{L^2} +\| R^1_\u\|^2_{L^2} \big)+C\tau^2\|\nabla p^{1}\|_{L^2}^2+C\tau h^4 \notag\\
 	\leq &C\tau h^4+C\tau^2\notag\\
 	\leq &C_1(\tau^2+h^4).\notag
 \end{align}
 
Now, we have proved the optimal error estimate for $i=1$. The proof for $i=N$ will be  presented in the following.
 
 \subsection{The proof for $i=N$}
 \begin{proof}
 	Mathematical induction is employed  to (\ref{biogu-19}). Firstly, we assume that 
 	\begin{align}\label{biogu-induction}
	\|e^n_\eta\|_{L^2}+	\|e^n_\sigmaa\|_{L^2}+	\|e^n_\u\|_{L^2}\leq C_1h^2, \quad \forall \, 1\leq n \leq N-1.
\end{align}

For sufficiently small $h$ such that $Ch^{\frac{2}{3}} \leq C$, by (\ref{biogu-regularity}), inverse inequalities (\ref{inverse-inequality}) and triangle inequalities, it yields thats
\begin{align}
	\| \eta^n_h\|_{L^{\infty}} \leq& \| P_3 \eta^n\|_{L^{\infty}}+ \| e^n_\eta\|_{L^{\infty}}\label{biogu-eta-infty}\\
	\leq& \|\eta^n\|_{H^2}+C h^{-1} \|e^n_\eta\|_{L^2} \notag\\
	\leq& C,\notag\\
	\| \nabla \eta^n_h\|_{L^3} \leq& \| \eta^n\|_{W^{1,3}}+C \| e^n_\eta\|_{W^{1,3}}\label{biogu-eta-w13}\\
	\leq & \| \eta^n\|_{H^2}+C h^{-\frac{4}{3}}\| e^n_\eta\|_{L^2}\notag\\
	\leq& \| \eta^n\|_{H^2}+Ch^{\frac{2}{3}}\notag\\
	\leq & C,\notag\\
		\| \eta^n_h \|_{{W^{1,\infty}} }\leq& C ( \| \eta^n\|_{{W^{1,\infty}}} + \| \eta^n-\eta^n_h\|_{{W^{1,\infty}} })\label{biogu-eta-w1infty}\\
		\leq & C \| \eta^n\|_{W^{2,4}}+Ch^{-2}\| \theta^n_\eta\|_{L^2}\notag\\
		\leq &C.\notag
\end{align}
similarly
\begin{align}
	\| \u^n_h\|_{L^{\infty}} \leq& \| P_1 \u^n\|_{L^{\infty}}+ \| e^n_\u\|_{L^{\infty}}\label{biogu-u-infty}\\
	\leq& \|\u^n\|_{H^2}+C h^{-1} \|e^n_\u\|_{L^2} \notag\\
	\leq& C,\notag\\
	\| \sigmaa^n_h\|_{L^{\infty}} \leq& \| P_4 \sigmaa^n\|_{L^{\infty}}+ \| e^n_\sigmaa\|_{L^{\infty}}\label{biogu-sigma-infty}\\
	\leq& \|\sigmaa^n\|_{H^2}+C h^{-1} \|e^n_\sigmaa\|_{L^2} \notag\\
	\leq& C,\notag\\
		\| \nabla \sigmaa^n_h\|_{L^3} \leq& \| \sigmaa^n\|_{W^{1,3}}+C \| e^n_\sigmaa\|_{W^{1,3}}\label{biogu-sigma-w13}\\
	\leq & \| \sigmaa^n\|_{H^2}+C h^{-\frac{4}{3}}\| e^n_\sigmaa\|_{L^2}\notag\\
	\leq& \| \sigmaa^n\|_{H^2}+Ch^{\frac{2}{3}}\notag\\
	\leq & C.\notag
\end{align}

 Subtracting  (\ref{biogu-1})--(\ref{biogu-3}) from (\ref{biogu-13})--(\ref{biogu-14}), we can get the following error equation:
\begin{align}
&	(D_{\tau} e_\eta^{n+1},r_h)+\mu_1(\nabla e^{n+1}_\eta,\nabla r_h)\label{biogu-errorequation1}
	\\
	=&-(D_{\tau} \theta_\eta^{n+1},r_h) +(\u^{n+1}  \eta^{n}, \nabla r_h)-(\hat{\u}^{n+1}_h  \eta^{n}_h, \nabla r_h)\notag
	\\
	&-(\sigmaa^{n+1} \eta^{n}, \nabla r_h)
	+(\sigmaa^{n+1}_h \eta^{n}_h, \nabla r_h)
	+(R^{n+1}_\eta,r_h), \quad  \forall \, r_h \in Q_h,\notag\\
	&	(D_{\tau} e_\sigmaa^{n+1},\w_h)+\mu_2(\nabla \cdot e^{n+1}_\sigmaa,\nabla \cdot \w_h)+\mu_2(\nabla \times e^{n+1}_\sigmaa,\nabla \times \w_h)+(e^{n+1}_\sigmaa,\w_h)\label{biogu-errorequation2}
	\\
	=&-(D_{\tau} \theta_\sigmaa^{n+1},\w_h) +(\u^{n+1} \cdot \sigmaa^{n}, \nabla \cdot \w_h) -(\hat{\u}^{n+1}_h \cdot \sigmaa^{n}_h, \nabla \cdot \w_h) -(\theta^{n+1}_\sigmaa,\w_h)\notag
	\\
	&+(\eta^{n} \nabla \eta^{n+1},\w_h)-(\eta^{n}_h \nabla \eta^{n+1}_h,\w_h)+(R^{n+1}_\sigmaa,\w_h),\quad \forall \w_h \in X_h,\notag\\
	&(\frac{\hat{e}^{n+1}_\u-e^n_\u}{\tau},\v_h)+\mu_3(\nabla \hat{e}^{n+1}_\u,\nabla \w_h)-(p^{n+1},\nabla \cdot \v_h)+\mu_3(s^n_h,\nabla \cdot \v_h)\label{biogu-errorequation3}\\
	=&-(D_{\tau} \theta_\u^{n+1},\v_h)-b(\u^n,  \u^{n+1},\v_h)+b(\u^n_h,  \hat{\u}^{n+1}_h,\v_h)\notag\\
	&-(\eta^{n} \nabla \eta^{n+1},\v_h)+(\eta^{n}_h \nabla \eta^{n+1}_h,\v_h)-(\sigmaa^{n} (\nabla \cdot \sigmaa^{n+1}),\v_h)\notag\\
	&+(\sigmaa^{n}_h (\nabla \cdot \sigmaa_h^{n+1}),\v_h)+(R^{n+1}_\u,\v_h),\quad \forall \, \v_h\in \textbf{Y}_h.\notag
\end{align}
 
Using (\ref{biogu-5}), we deduce that 
\begin{align}
	2(s^n_h,\nabla \cdot \hat{e}^{n+1}_\u) &=-2 (s^n_h,\nabla \cdot \hat{\u}_h^{n+1})\\
	&=-2(s^n_h,s^n_h-s^{n+1}_h)\notag\\
	&=\|s^{n+1}_h\|^2_{L^2}-\|s^{n}_h\|^2_{L^2}-\|s^n_h-s^{n+1}_h\|^2_{L^2}.\notag
\end{align}

Combing (\ref{biogu-5}) with Div--grad inequality, it infers that 
\begin{align}
	\|s^n_h-s^{n+1}_h\|_{L^2}\leq \| \nabla \cdot \hat{e}^{n+1}_\u\|_{L^2}\leq \| \nabla \hat{e}^{n+1}_\u\|_{L^2}.
\end{align}

Thanks to (\ref{biogu-6}) and (\ref{biogu-divergencefree}), we derive that 
\begin{align}
	(\hat{e}^{n+1}_\u-e^n_\u,\hat{e}^{n+1}_\u)=(e^{n+1}_\u+\nabla \rho^{n+1}_h-e^n_\u,e^{n+1}_\u+\nabla \rho^{n+1}_h)\\
	=(e^{n+1}_\u-e^n_\u,e^{n+1}_\u)+(\nabla \rho^{n+1}_h,\nabla \rho^{n+1}_h).\notag
\end{align}

Thus, taking $(r_h,\w_h,\v_h)=2\tau(e^{n+1}_\eta,e^{n+1}_\sigmaa,\hat{e}^{n+1}_\u)$ in (\ref{biogu-errorequation1})--(\ref{biogu-errorequation3}), we have 
\begin{align}
	&\| e^{n+1}_\eta\|^2_{L^2}-\| e^n_\eta\|^2_{L^2}+\|  e^{n+1}_\eta- e^{n}_\eta\|^2_{L^2}+2\mu_1 \tau \| \nabla e^{n+1}_\eta\|^2_{L^2}\label{biogu-errorequation11}
	\\
	=&-2\tau(D_{\tau} \theta_\eta^{n+1},e^{n+1}_\eta) +2\tau(\u^{n+1}  \eta^{n}, \nabla e^{n+1}_\eta)-2\tau(\hat{\u}^{n+1}_h  \eta^{n}_h, \nabla e^{n+1}_\eta)\notag
	\\
	&-2\tau(\sigmaa^{n+1} \eta^{n}, \nabla e^{n+1}_\eta)
	+2\tau(\sigmaa^{n+1}_h \eta^{n}_h, \nabla e^{n+1}_\eta)
	+2\tau(R^{n+1}_\eta,e^{n+1}_\eta), \notag\\
	=&\sum_{m=1}^{6}I_m.\notag
	\\
	&\| e^{n+1}_\sigmaa\|^2_{L^2}-\| e^n_\sigmaa\|^2_{L^2}+\|  e^{n+1}_\sigmaa- e^{n}_\sigmaa\|^2_{L^2}\label{biogu-errorequation22}\\
	&+2\mu_2 \tau \| \nabla \cdot e^{n+1}_\sigmaa\|^2_{L^2}+\mu_2 \tau \| \nabla \times e^{n+1}_\sigmaa\|^2_{L^2}+2\tau \|  e^{n+1}_\sigmaa\|^2_{L^2}\notag\\
	=&-2\tau(D_{\tau} \theta_\sigmaa^{n+1},e^{n+1}_\sigmaa) +2\tau(\u^{n+1} \cdot \sigmaa^{n}, \nabla \cdot e^{n+1}_\sigmaa) -2\tau(\u^{n+1}_h \cdot \sigmaa^{n}_h, \nabla \cdot e^{n+1}_\sigmaa) \notag
	\\
	&-2\tau(\theta^{n+1}_\sigmaa,e^{n+1}_\sigmaa)+2\tau(\eta^{n} \nabla \eta^{n+1},e^{n+1}_\sigmaa)-2\tau(\eta^{n}_h \nabla \eta^{n+1}_h,e^{n+1}_\sigmaa)+2\tau(R^{n+1}_\sigmaa,e^{n+1}_\sigmaa),\notag\\
	=&\sum_{m=7}^{13}I_m.\notag\\
		&\| e^{n+1}_\u\|^2_{L^2}-\| e^n_\u\|^2_{L^2}+\|  e^{n+1}_\u- e^{n}_\u\|^2_{L^2}+\mu_3 \tau (\| s^{n+1}_h\|^2_{L^2}-\| s^n_h\|^2_{L^2})+\mu_3 \tau \| \nabla \hat{e}^{n+1}_\u\|^2_{L^2}+2\|  \nabla \rho^{n+1}_h\|^2_{L^2}\label{biogu-errorequation33}\\
			=&-2\tau(D_{\tau} \theta_\u^{n+1},\hat{e}^{n+1}_\u)-2\tau b(\u^n,  \u^{n+1},\hat{e}^{n+1}_\u)+2\tau b(\u^n_h,  \hat{\u}^{n+1}_h,\hat{e}^{n+1}_\u)\notag\\
		&-2\tau(\eta^{n} \nabla \eta^{n+1},\hat{e}^{n+1}_\u)+2\tau(\eta^{n}_h \nabla \eta^{n+1}_h,\hat{e}^{n+1}_\u)-2\tau(\sigmaa^{n} (\nabla \cdot \sigmaa^{n+1}),\hat{e}^{n+1}_\u)\notag\\
		&+2\tau(\sigmaa^{n}_h (\nabla \cdot \sigmaa_h^{n+1}),\hat{e}^{n+1}_\u)+2\tau(R^{n+1}_\u,\hat{e}^{n+1}_\u)-2\tau(p^{n+1},\nabla \cdot \hat{e}^{n+1}_\u)\notag,
		\\
		=&\sum_{m=14}^{22}I_m.\notag
\end{align}

Application of  H\"{o}lder inequality, Young inequality and projection error (\ref{biogu-projection-error}), we have
\begin{align}
	I_1=&-2\tau(D_{\tau} \theta_\eta^{n+1},e^{n+1}_\eta)\notag\\
	 \leq& C \tau \| D_{\tau} \theta_\eta^{n+1}\|^2_{L^2}+\epsilon_1 \tau \| \nabla e^{n+1}_\eta\|^2_{L^2}\notag\\
	\leq &C \tau h^4 \| D_{\tau} \eta^{n+1}\|^2_{L^2}+\epsilon_1 \tau \| \nabla e^{n+1}_\eta\|^2_{L^2}
\end{align}

After a simple rearrangement, using (\ref{biogu-regularity}), (\ref{biogu-eta-infty}) and projection error (\ref{biogu-projection-error}), we can derive the following inequalities:
\begin{align}
	I_2+I_3=&2\tau(\u^{n+1}  \eta^{n}, \nabla e^{n+1}_\eta)-2\tau(\hat{\u}^{n+1}_h  \eta^{n}_h, \nabla e^{n+1}_\eta)\\
	=&2 \tau\big(   (\eta^n-\eta^n_h) \u^{n+1},\nabla e^{n+1}_\eta\big)+2\tau\big(\eta^n_h (\u^{n+1}-\hat{\u}^{n+1}_h),\nabla e^{n+1}_\eta\big)\notag\\
	=&2 \tau\big(   (\eta^n-\eta^n_h) \u^{n+1},\nabla e^{n+1}_\eta\big)+2\tau\big(\eta^n_h \theta_\u^{n+1},\nabla e^{n+1}_\eta\big)+2\tau\big(\eta^n_h \hat{e}_\u^{n+1},\nabla e^{n+1}_\eta\big)\notag\\
	\leq &C \tau \|e^n_\eta+\theta^n_\eta\|_{L^2}\|\u^{n+1}\|_{L^{\infty}} \| \nabla e^{n+1}_\eta\|_{L^2}\notag
	\\
	&+C\tau \|\eta^n_h\|_{L^{\infty}}\|\theta^{n+1}_\u\|_{L^2}\| \nabla e^{n+1}_\eta\|_{L^2}+2\tau\big(\eta^n_h \hat{e}_\u^{n+1},\nabla e^{n+1}_\eta\big),\notag\\
	\leq &C \tau h^4+C \tau \| e^n_\eta\|^2_{L^2}+\epsilon_1 \tau \| \nabla e^{n+1}_\eta\|^2_{L^2}+2\tau\big(\eta^n_h \hat{e}_\u^{n+1},\nabla e^{n+1}_\eta\big).\notag\\
		I_4+I_5=&-2\tau(\sigmaa^{n+1} \eta^{n}, \nabla e^{n+1}_\eta)
	+2\tau(\sigmaa^{n+1}_h \eta^{n}_h, \nabla e^{n+1}_\eta)\\
	=&-2 \tau\big(   (\eta^n-\eta^n_h) \sigmaa^{n+1},\nabla e^{n+1}_\eta\big)-\big(\eta^n_h (\sigmaa^{n+1}-\sigmaa^{n+1}_h),\nabla e^{n+1}_\eta\big)\notag\\
\leq &	C \tau h^4 +C \tau( \| e^n_\eta\|^2_{L^2}+\| e^{n+1}_\sigmaa\|^2_{L^2})+\epsilon_1 \tau \| \nabla e^{n+1}_\eta\|^2_{L^2}.\notag
\end{align}

And
\begin{align}
	I_6=2\tau(R^{n+1}_\eta,e^{n+1}_\eta)
	\leq C \tau  \|R^{n+1}_\eta\|^2_{L^2}+\epsilon_1 \tau \| \nabla e^{n+1}_\eta\|^2_{L^2}.
\end{align}

For sufficiently small $\epsilon_1$, combing the above inequalities with (\ref{biogu-errorequation11}) yields 
\begin{align}\label{biogu-eta-errorr-esult}
	&\| e^{n+1}_\eta\|^2_{L^2}-\| e^n_\eta\|^2_{L^2}+\|  e^{n+1}_\eta- e^{n}_\eta\|^2_{L^2}+\mu_1 \tau \| \nabla e^{n+1}_\eta\|^2_{L^2}\\
	\leq &C \tau h^4 + C \tau( \| e^n_\eta\|^2_{L^2}+\| e^{n+1}_\sigmaa\|^2_{L^2})+C \tau h^4 \| D_{\tau} \eta^{n+1}\|^2_{L^2}+C\tau  \|R^{n+1}_\eta\|^2_{L^2}+2\tau\big(\eta^n_h \hat{e}_\u^{n+1},\nabla e^{n+1}_\eta\big).\notag
\end{align}

With the application of the H\"{o}lder inequality and Young inequalities, we can deal with $\sum_{m=7}^{13}I_m$ by 
\begin{align}
	I_7=&-2\tau(D_{\tau} \theta_\sigmaa^{n+1},e^{n+1}_\sigmaa)\\
	 \leq& C \tau \| D_{\tau} \theta_\sigmaa^{n+1}\|^2_{L^2}+C \tau \| e^{n+1}_\sigmaa\|^2_{L^2}\notag\\
	\leq &C \tau h^4 \| D_{\tau} \sigmaa^{n+1}\|^2_{L^2}+C \tau \| e^{n+1}_\sigmaa\|^2_{L^2}\notag.
\end{align}

After a simple rearrangement, in view of  (\ref{biogu-regularity}), (\ref{biogu-sigma-infty}) and projection error (\ref{biogu-projection-error}), we can obtain
\begin{align}
	I_8+I_9=&2\tau(\u^{n+1} \cdot \sigmaa^{n}, \nabla \cdot e^{n+1}_\sigmaa) -2\tau(\hat{\u}^{n+1}_h \cdot \sigmaa^{n}_h, \nabla \cdot e^{n+1}_\sigmaa)\\
	=&2 \tau\big(   (\sigmaa^n-\sigmaa^n_h) \u^{n+1},\nabla \cdot e^{n+1}_\sigmaa\big)+2\tau\big(\sigmaa^n_h (\u^{n+1}-\hat{\u}^{n+1}_h),\nabla \cdot e^{n+1}_\sigmaa\big)\notag\\
	\leq &C \tau \|e^n_\sigmaa+\theta^n_\sigmaa\|_{L^2}\|\u^{n+1}\|_{L^{\infty}} \| \nabla \cdot e^{n+1}_\sigmaa\|_{L^2}\notag\\
	&+C\tau \|\sigmaa^n_h\|_{L^{\infty}}\|\theta^{n+1}_\u\|_{L^2}\| \nabla \cdot e^{n+1}_\sigmaa\|_{L^2}+2\tau\big(\sigmaa^n_h \hat{e}^{n+1}_\u,\nabla \cdot e^{n+1}_\sigmaa\big),\notag\\
	\leq &C \tau h^4 +C \tau\| e^n_\sigmaa\|^2_{L^2}+\epsilon_2 \tau \| \nabla\cdot  e^{n+1}_\sigmaa\|^2_{L^2}+2\tau\big(\sigmaa^n_h \hat{e}^{n+1}_\u,\nabla \cdot e^{n+1}_\sigmaa\big).\notag
\end{align}
\begin{align}
		I_{10}=-2\tau(\theta^{n+1}_\sigmaa,e^{n+1}_\sigmaa)\leq C\tau h^4+ C\tau\| e^{n+1}_\sigmaa\|^2_{L^2}.
\end{align}
\begin{align}\label{biogu-15}
I_{11}+I_{12}=&2 \tau(\eta^{n} \nabla \eta^{n+1},e^{n+1}_\sigmaa)-2\tau(\eta^{n}_h \nabla \eta^{n+1}_h,e^{n+1}_\sigmaa)\\
	=&2 \tau\big(   (\eta^n-\eta^n_h) \nabla \eta^{n+1}, e^{n+1}_\sigmaa\big)+2\tau\big(\eta^n_h \nabla(\eta^{n+1}-\eta^{n+1}_h),e^{n+1}_\sigmaa\big)\notag\\
\leq &C \tau \|e^n_\eta+\theta^n_\eta\|_{L^2}\|\nabla \eta^{n+1}\|_{L^{\infty}} \| e^{n+1}_\sigmaa\|_{L^2}\notag\\
&+C\tau \|\eta^n_h\|_{L^{\infty}}\| \nabla e^{n+1}_\eta\|_{L^2}\|  e^{n+1}_\sigmaa\|_{L^2}+2\tau\big(\eta^n_h \nabla \theta_\eta^{n+1},e^{n+1}_\sigmaa\big)\notag\\
\leq &C \tau h^4 +C \tau( \| e^n_\eta\|^2_{L^2}+\| e^{n+1}_\sigmaa\|^2_{L^2})\\
&+\epsilon_1\tau \| \nabla e^{n+1}_\eta\|^2_{L^2}+2\tau\big(\eta^n_h \nabla \theta_\eta^{n+1},e^{n+1}_\sigmaa\big),\notag
\end{align}

By (\ref{biogu-eta-w1infty}), projection error (\ref{biogu-projection-error}) and integrate by parts (IBP), we can deduce that
\begin{align}\label{biogu-16}
	2\tau(\eta^n_h\nabla \theta^{n+1}_\eta,e^{n+1}_\sigmaa)=&-2\tau(\nabla \eta^n_h \theta^{n+1}_\eta,e^{n+1}_\sigmaa)-2\tau (\eta^n_h \theta^{n+1}_\eta, \nabla \cdot e^{n+1}_\sigmaa)\\
	\leq &C\tau \| \nabla \eta^n_h\|_{L^\infty}\|\theta^{n+1}_\eta\|_{L^2} \|e^{n+1}_\sigmaa\|_{L^2}+C \tau \| \eta^n_h\|_{L^{\infty}} \| \theta^{n+1}_\eta\|_{L^2}\| \nabla \cdot e^{n+1}_\sigmaa\|_{L^2}\notag\\
	\leq & C \tau h^4 + \epsilon_2 \tau \| \nabla \cdot e^{n+1}_\sigmaa\|^2_{L^2}+C\tau \|e^{n+1}_\sigmaa\|_{L^2}^2 ,\notag
\end{align}

Combining (\ref{biogu-15}) and (\ref{biogu-16}), it follows that
\begin{align}
I_{11}+I_{12} \leq C \tau h^4 +C \tau( \| e^n_\eta\|^2_{L^2}+\| e^{n+1}_\sigmaa\|^2_{L^2})+\epsilon_2 \tau \| \nabla \cdot e^{n+1}_\sigmaa\|^2_{L^2}.
\end{align}
\begin{align}
		I_{13}=2\tau(R^{n+1}_\sigmaa,e^{n+1}_\sigmaa)
	\leq C \tau  \|R^{n+1}_\sigmaa\|^2_{L^2}+C \tau \| e^{n+1}_\sigmaa\|^2_{L^2}.
\end{align}

For sufficiently small $\epsilon_2$, combing the above inequalities with (\ref{biogu-errorequation22}) yields
\begin{align}\label{biogu-sigma-errorr-esult}
	&\| e^{n+1}_\sigmaa\|^2_{L^2}-\| e^n_\sigmaa\|^2_{L^2}+\|  e^{n+1}_\sigmaa- e^{n}_\sigmaa\|^2_{L^2}\\
	&+\mu_2 \tau \| \nabla \cdot e^{n+1}_\sigmaa\|^2_{L^2}+\mu_2 \tau \| \nabla \times e^{n+1}_\sigmaa\|^2_{L^2}+2\tau \|  e^{n+1}_\sigmaa\|^2_{L^2}\notag\\
	\leq &C \tau h^4\notag+ C \tau(  \| e^n_\sigmaa\|^2_{L^2}+\| e^n_\eta\|^2_{L^2}+\| e^{n+1}_\eta\|^2_{L^2}+\| e^{n+1}_\sigmaa\|^2_{L^2})\notag\\
	&+C \tau h^4 \| D_{\tau} \sigmaa^{n+1}\|^2_{L^2}+C\tau \|R^{n+1}_\sigmaa\|^2_{L^2}+2\tau\big(\sigmaa^n_h \hat{e}^{n+1}_\u,\nabla \cdot e^{n+1}_\sigmaa\big).\notag
\end{align}

As for $\sum_{m=14}^{22}I_m$, through the H\"{o}lder inequality and Young inequalities, we conclude that
\begin{align}
		I_{14}=&-2\tau(D_{\tau} \theta_\u^{n+1},\hat{e}^{n+1}_\u)\\
		 \leq &C \tau \| D_{\tau} \theta_\u^{n+1}\|^2_{L^2}+\epsilon_3 \tau \| \nabla \hat{e}^{n+1}_\u\|^2_{L^2}\notag\\
		 \leq &C \tau  h^4\| D_{\tau} \u^{n+1}\|^2_{L^2}+\epsilon_3 \tau \| \nabla \hat{e}^{n+1}_\u\|^2_{L^2}.\notag
\end{align}

After a simple rearrangement, by (\ref{biogu-trilinear1}),  (\ref{biogu-regularity}), (\ref{biogu-u-infty}), projection error (\ref{biogu-projection-error}), we deduce that
\begin{align}
	I_{15}+I_{16}=&-2\tau b(\u^n,  \u^{n+1},\hat{e}^{n+1}_\u)+2\tau b(\u^n_h,  \hat{\u}^{n+1}_h,\hat{e}^{n+1}_\u)\\
	=& - 2\tau b(\u^n-\u^n_h,\u^{n+1},\hat{e}^{n+1}_\u)-2\tau b(\u^n_h,\u^{n+1}-\hat{\u}^{n+1}_h,\hat{e}^{n+1}_\u)\notag\\
	=&- 2\tau b(\u^n-\u^n_h,\u^{n+1},\hat{e}^{n+1}_\u)+2\tau b(\u^n_h, \hat{e}^{n+1}_\u, \theta^{n+1}_\u)\notag\\
	\leq &C \tau \| e^n_\u+\theta^n_\u\|_{L^2} \| \nabla \u^{n+1}\|_{L^3} \| \hat{e}^{n+1}_\u\|_{L^6}+C\tau\| \u^n_h\|_{L^{\infty}} \|\nabla \hat{e}^{n+1}_\u\|_{L^2} \| \theta^{n+1}_\u\|_{L^2}\notag\\
	\leq &C \tau h^4 +\epsilon_3 \tau \| \nabla \hat{e}^{n+1}_\u\|^2_{L^2}+C \tau \| e^n_\u\|^2_{L^2}.\notag
\end{align}

After a simple rearrangement, in terms of  (\ref{biogu-regularity}), (\ref{biogu-sigma-infty}), and projection error (\ref{biogu-projection-error}), we have
\begin{align}\label{biogu-17}
I_{17}+I_{18}=&-\big(2 \tau(\eta^{n} \nabla \eta^{n+1},\hat{e}^{n+1}_\u)-2\tau(\eta^{n}_h \nabla \eta^{n+1}_h,\hat{e}^{n+1}_\u)\big)\\
	\leq &2 \tau|\big(   (\eta^n-\eta^n_h) \nabla \eta^{n+1}, \hat{e}^{n+1}_\u\big)|-2\tau\big(\eta^n_h \nabla(\eta^{n+1}-\eta^{n+1}_h),\hat{e}^{n+1}_\u\big)\notag\\
	\leq &C \tau \|e^n_\eta+\theta^n_\eta\|_{L^2}\|\nabla \eta^{n+1}\|_{L^{3}} \|\hat{e}^{n+1}_\u\|_{L^6}\notag\\
	&+2\tau|\big(\eta^n_h \nabla \theta_\eta^{n+1},\hat{e}^{n+1}_\u\big)| -2\tau\big(\eta^n_h \nabla e_\eta^{n+1},\hat{e}^{n+1}_\u\big)\notag\\
	\leq &C \tau h^4 +C \tau \| e^n_\eta\|^2_{L^2}+\epsilon_3 \tau \| \nabla \hat{e}^{n+1}_\u \|^2_{L^2}\notag\\
	&+2\tau|\big(\eta^n_h \nabla \theta_\eta^{n+1},\hat{e}^{n+1}_\u\big)|-2\tau\big(\eta^n_h \nabla e_\eta^{n+1},\hat{e}^{n+1}_\u\big),\notag
\end{align}

By (\ref{biogu-eta-w13}), projection error (\ref{biogu-projection-error}) and integrate by parts (IBP), we can deduce that
\begin{align}\label{biogu-18}
	2\tau|(\eta^n_h\nabla \theta^{n+1}_\eta,\hat{e}^{n+1}_\u)|=&|-2\tau(\nabla \eta^n_h \theta^{n+1}_\eta,\hat{e}^{n+1}_\u)-2\tau (\eta^n_h \theta^{n+1}_\eta, \nabla \hat{e}^{n+1}_\u)|\\
	\leq &C\tau \| \nabla \eta^n_h\|_{L^3}\|\theta^{n+1}_\eta\|_{L^2} \|\hat{e}^{n+1}_\u\|_{L^6}+C \tau \| \eta^n_h\|_{L^{\infty}} \| \theta^{n+1}_\eta\|_{L^2}\| \nabla  \hat{e}^{n+1}_\u\|_{L^2}\notag\\
	\leq & C \tau h^4 + \epsilon_3\tau \| \nabla \hat{e}^{n+1}_\u\|^2_{L^2},\notag
\end{align}

Combining (\ref{biogu-17}) and (\ref{biogu-18}), it follows that
\begin{align}
	I_{17}+I_{18} \leq C \tau h^4 +C \tau \| e^n_\eta\|^2_{L^2}+\epsilon_3 \tau \| \nabla \hat{e}^{n+1}_\u\|^2_{L^2}-2\tau\big(\eta^n_h \nabla e_\eta^{n+1},\hat{e}^{n+1}_\u\big).
\end{align}

Furthermore
\begin{align}\label{biogu-21}
	I_{19}+I_{20}=&-2\tau(\sigmaa^{n} (\nabla \cdot \sigmaa^{n+1}),\hat{e}^{n+1}_\u)
	+2\tau(\sigmaa^{n}_h (\nabla \cdot \sigmaa_h^{n+1}),\hat{e}^{n+1}_\u)\\
	=&-2\tau \big( (\sigmaa^n-\sigmaa^n_h) \hat{e}^{n+1}_\u,\nabla \cdot \sigmaa^{n+1}  \big)  -2\tau \big(\sigmaa^n_h \hat{e}^{n+1}_\u,\nabla \cdot (\sigmaa^{n+1}-\sigmaa^{n+1}_h) \big) \notag \\
	\leq &C \tau\|e^n_\sigmaa+\theta^n_\sigmaa\|_{L^2}\|\hat{e}^{n+1}_\u\|_{L^6} \|\nabla \cdot \sigmaa^{n+1}\|_{L^3}\notag\\
	&-2\tau \big( \sigmaa^n_h \hat{e}^{n+1}_\u,\nabla \cdot e^{n+1}_\sigmaa \big)-2\tau \big( \sigmaa^n_h \hat{e}^{n+1}_\u,\nabla \cdot \theta^{n+1}_\sigmaa \big)\notag\\
	\leq & C \tau h^4+C \tau \|e^n_\sigmaa\|^2_{L^2}+ \epsilon_3\tau \| \nabla \hat{e}^{n+1}_\u\|^2_{L^2}\notag\\
	&-2\tau \big( \sigmaa^n_h \hat{e}^{n+1}_\u,\nabla \cdot e^{n+1}_\sigmaa \big)-2\tau \big( \sigmaa^n_h \hat{e}^{n+1}_\u,\nabla \cdot \theta^{n+1}_\sigmaa \big).\notag
\end{align}
where by using IBP, (\ref{biogu-sigma-infty}) and (\ref{biogu-sigma-w13}), it follows that
\begin{align}\label{biogu-22}
	-2\tau \big( \sigmaa^n_h \hat{e}^{n+1}_\u,\nabla \cdot \theta^{n+1}_\sigmaa \big)
	=&2\tau \big(\nabla \sigmaa^n_h \hat{e}^{n+1}_\u, \theta^{n+1}_\sigmaa \big)
	+2\tau \big( \sigmaa^n_h \nabla \hat{e}^{n+1}_\u, \theta^{n+1}_\sigmaa \big)\\
	\leq& C \tau \| \nabla \sigmaa^n_h \|_{L^3}\|\hat{e}^{n+1}_\u\|_{L^6}\|\theta^{n+1}_\sigmaa\|_{L^2}+C\tau\|  \sigmaa^n_h \|_{L^{\infty}}\| \nabla \hat{e}^{n+1}_\u\|_{L^2}\|\theta^{n+1}_\sigmaa\|_{L^2}\notag\\
	\leq &C \tau h^4+\epsilon_3\tau\| \nabla \hat{e}^{n+1}_\u\|_{L^2}^2.\nonumber
\end{align}
thus, combining (\ref{biogu-21}) and (\ref{biogu-22}), is follows that
\begin{align}
	I_{19}+I_{20}\leq C \tau h^4+C \tau \|e^n_\sigmaa\|^2_{L^2}+\epsilon_3 \tau \| \nabla \hat{e}^{n+1}_\u\|^2_{L^2}-2\tau \big( \sigmaa^n_h \hat{e}^{n+1}_\u,\nabla \cdot e^{n+1}_\sigmaa \big).
\end{align}
\begin{align}
	I_{21}=&2\tau(R^{n+1}_\u,\hat{e}^{n+1}_\u)
	\leq C \tau  \|R^{n+1}_\u\|^2_{L^2}+\epsilon_3 \tau \| \nabla \hat{e}^{n+1}_\u\|^2_{L^2}.
\end{align}

Thanks to (\ref{biogu-regularity}), (\ref{biogu-6}) and (\ref{biogu-divergencefree}), it infers that
\begin{align}
	I_{22}=2\tau(\nabla p^{n+1},\hat{e}^{n+1}_\u)=&2\tau(\nabla p^{n+1},e^{n+1}_\u+\nabla \rho^{n+1}_h)\\
	=&2\tau((\nabla p^{n+1},e^{n+1}_\u)+2\tau(\nabla p^{n+1},\nabla \rho^{n+1})\notag\\
	\leq& C\tau^2 \|\nabla p^{n+1}\|_{L^2}^2+\epsilon_4\|\nabla\rho^{n+1}\|^2_{L^2}.\notag
\end{align}

For sufficiently small $\epsilon_3,\epsilon_4$, combining the above inequalities with (\ref{biogu-errorequation33}), it follows that
\begin{align}\label{biogu-u-errorr-esult}
	&\| e^{n+1}_\u\|^2_{L^2}-\| e^n_\u\|^2_{L^2}+\|  e^{n+1}_\u- e^{n}_\u\|^2_{L^2}+\mu_3 \tau \| \nabla e^{n+1}_\u\|^2_{L^2}+ \|  \nabla \rho^{n+1}_h\|^2_{L^2}\\
	\leq & C \tau h^4 \| D_{\tau} \u^{n+1}\|^2_{L^2}+C \tau h^4 +C \tau \big(\| e^n_\u\|^2_{L^2}+ \| e^n_\eta\|^2_{L^2}+\|e^n_\sigmaa\|^2_{L^2}\big)
	+\epsilon_1\tau \| \nabla e^{n+1}_\eta\|^2_{L^2}\notag\\
	-&2\tau\big(\eta^n_h \nabla e_\eta^{n+1},\hat{e}^{n+1}_\u\big)-2\tau \big( \sigmaa^n_h \hat{e}^{n+1}_\u,\nabla \cdot e^{n+1}_\sigmaa \big)+C \tau  \|R^{n+1}_\u\|^2_{L^2}+C\tau^2 \|\nabla p^{n+1}\|_{L^2}^2.\notag
\end{align}

Furthermore, for sufficiently small $\epsilon_1,\epsilon_2,\epsilon_3$, summing up (\ref{biogu-eta-errorr-esult}), (\ref{biogu-sigma-errorr-esult}) and (\ref{biogu-u-errorr-esult}), it yields that
\begin{align}\label{biogu-error-result}
	&\| e^{n+1}_\eta\|^2_{L^2}-\| e^n_\eta\|^2_{L^2}+\mu_1 \tau \| \nabla e^{n+1}_\eta\|^2_{L^2}\\
	&+\| e^{n+1}_\sigmaa\|^2_{L^2}-\| e^n_\sigmaa\|^2_{L^2}+\mu_2 \tau \| \nabla \cdot e^{n+1}_\sigmaa\|^2_{L^2}+\mu_2 \tau \| \nabla \times e^{n+1}_\sigmaa\|^2_{L^2}+2\tau \|  e^{n+1}_\sigmaa\|^2_{L^2}\notag\\
	&+\| e^{n+1}_\u\|^2_{L^2}-\| e^n_\u\|^2_{L^2}+\mu_3 \tau \| \nabla e^{n+1}_\u\|^2_{L^2}+ \|  \nabla \rho^{n+1}_h\|^2_{L^2}\notag\\
	&+\big(\|e^{n+1}_\u-e^{n+1}_\u\|^2_{L^2}+\|e^{n+1}_\eta-e^{n+1}_\eta\|^2_{L^2}+\|e^{n+1}_\sigmaa-e^{n+1}_\sigmaa\|^2_{L^2}\big)\notag\\
	\leq& C \tau (\tau^2+h^4) +C\tau^2 \|\nabla p^{n+1}\|_{L^2}^2 +C\tau ( \|R^{n+1}_\eta\|^2_{L^2}+\|R^{n+1}_\sigmaa\|^2_{L^2}+\|R^{n+1}_\sigmaa|^2_{L^2})\notag\\
	&+C \tau h^4 \big(\| D_{\tau} \eta^{n+1}\|^2_{L^2}+\| D_{\tau} \sigmaa^{n+1}\|^2_{L^2}+\| D_{\tau} \u^{n+1}\|^2_{L^2}\big)\notag\\
	&+C \tau \big(\| e^n_\u\|^2_{L^2}+ \| e^n_\eta\|^2_{L^2}+\|e^n_\sigmaa\|^2_{L^2}\big)+C \tau \big(\| e^{n+1}_\eta\|^2_{L^2}+\| e^{n+1}_\sigmaa\|^2_{L^2}\big).\notag
\end{align}

From (\ref{biogu-regularity}),  (\ref{biogu-trunction-error}),
totaling (\ref{biogu-error-result}) across $n=0$ to $n=N-1$, we
obtain
\begin{align}
	\| e&^{N}_\eta\|^2_{L^2}+\| e^{N}_\sigmaa\|^2_{L^2}+\| e^{N}_\u\|^2_{L^2}+\mu_1\tau\sum_{n=0}^{N-1} \| \nabla e^{n+1}_\eta\|^2_{L^2}+\mu_2\tau\sum_{n=0}^{N-1} \| \nabla \cdot e^{n+1}_\sigmaa\|^2_{L^2}\\
	&+\mu_2\tau\sum_{n=0}^{N-1} \| \nabla \times e^{n+1}_\sigmaa\|^2_{L^2}+\mu_3\tau\sum_{n=0}^{N-1} \| \nabla e^{n+1}_\u\|^2_{L^2}+2\tau \sum_{n=0}^{N-1} \| e^{n+1}_\sigmaa\|^2_{L^2}+ \sum_{n=0}^{N-1}\|  \nabla \rho^{n+1}_h\|^2_{L^2}\notag\\
	&+ \sum_{n=0}^{N-1} \big(\|e^{n+1}_\u-e^{n+1}_\u\|^2_{L^2}+\|e^{n+1}_\eta-e^{n+1}_\eta\|^2_{L^2}+\|e^{n+1}_\sigmaa-e^{n+1}_\sigmaa\|^2_{L^2}\big)\notag\\
	\leq & C \tau \sum_{n=0}^{N-1} \big(\| e^n_\u\|^2_{L^2}+ \| e^n_\eta\|^2_{L^2}+\|e^n_\sigmaa\|^2_{L^2}\big)+C \tau \sum_{n=0}^{N-1} \big(\| e^{n+1}_\eta\|^2_{L^2}+\| e^{n+1}_\sigmaa\|^2_{L^2}\big)+C(\tau+h^4).\notag
\end{align}

Using the Gronwall lemma, there exists $C_1>0$ such that
\begin{align}\label{biogu-i=N}
	\| e&^{n+1}_\eta\|^2_{L^2}+\| e^{n+1}_\sigmaa\|^2_{L^2}+\| e^{n+1}_\u\|^2_{L^2}+\sum_{n=0}^{N-1}\|  \nabla \rho^{n+1}_h\|^2_{L^2}\\
	&+\tau\sum_{n=0}^{N-1}\big(\mu_1 \| \nabla e^{n+1}_\eta\|^2_{L^2}+\mu_2 \| \nabla \cdot e^{n+1}_\sigmaa\|^2_{L^2} +\mu_2 \| \nabla \times e^{n+1}_\sigmaa\|^2_{L^2} +\mu_3 \| \nabla e^{n+1}_\u\|^2_{L^2}  \big)\notag\\
	&+ \sum_{n=0}^{N-1} \big(\|e^{n+1}_\u-e^{n+1}_\u\|^2_{L^2}+\|e^{n+1}_\eta-e^{n+1}_\eta\|^2_{L^2}+\|e^{n+1}_\sigmaa-e^{n+1}_\sigmaa\|^2_{L^2}\big)\notag\\
	\leq &C_1(\tau+h^4).\notag
\end{align}

Thus, the proof of Theorem \ref{biogu-error-lemma} is done.
\end{proof}

\begin{remark}
	It is observed that the optimal convergence rate is attained at the initial time step, whereas the rate at the final time step becomes suboptimal. 
	The reduced convergence rate stated in Theorem~\ref{biogu-error-lemma} is solely attributed to the presence of the term 
	$C\tau^{2}\|\nabla p^{n+1}\|_{L^{2}}^{2}$. 
	To recover optimal convergence, this term would need to be removed. 
	To address this issue rigorously, we will employ a duality argument in the subsequent analysis, following the methodology developed in 
	\cite{pyo2005, pyo2006, wangzhaowei2025}.
	Nevertheless, this suboptimal estimate is indispensable for controlling the convection term in the subsequent analysis, 
	which in turn enables us to derive the optimal overall convergence order. 
\end{remark}

\subsection{Optimal error estimates}
In this section, we will use the preliminary estimates from the previous subsection to
provide an optimal error estimates.

We consider the stationary Stokes equations which will be used in a 
duality argument \cite{galdi2011}:
\begin{subequations}\label{biogu-stokes}
\begin{align}
	-\Delta \mathbf{a} + \nabla d &= \mathbf{g} \quad \text{in } \Omega,\\
	\nabla \cdot  \mathbf{a} &= 0 \quad \text{in } \Omega,\\
	\mathbf{a} &= 0 \quad \text{on } \partial\Omega.
\end{align}
\end{subequations}

The unique solution $(\mathbf{a},d) \in H_0^1(\Omega)\times L_0^2(\Omega)$ 
of the stationary Stokes equations satisfies
\begin{align}\label{biogu-stokes-regularity}
	\|\mathbf{a}\|_{H^2} + \|d\|_{H^1} \le C \|\mathbf{g}\|_{L^2}.
\end{align}

This above assumption is valid under the condition that $\partial \Omega$ is of class $C^2$ or $\Omega$ is a convex polygonal domain \cite{constantin1988,dauge1989},
and the following equation holds \cite{wangzhaowei2025}:
\begin{align}\label{biogu-dualinequality}
	\|\mathbf{g}\|_* \leq \sup_{\mathbf{v} \in \V} \frac{(\mathbf{g}, \mathbf{v})}{\|\mathbf{v}\|_{H^1}} = \sup_{\mathbf{v} \in \V} \frac{(-\Delta \mathbf{a}, \mathbf{v})}{\|\mathbf{v}\|_{H^1}} \leq \|\nabla \mathbf{a}\|_{L^2}.
\end{align}

Now let $(\mathbf{a}_h, d_h) \in \Y_h \times M_h$ indicate the finite element solution 
of (\ref{biogu-stokes}), namely,
\begin{subequations}\label{biogu-stokes-projection}
	\begin{align}
		(\nabla \mathbf{a}_h , \nabla \mathbf{v}_h ) 
		- ( d_h , \operatorname{div} \mathbf{v}_h )
		&= ( \mathbf{g}, \mathbf{v}_h )
		\quad \forall\, \mathbf{v}_h \in \Y_h,\\
		( q_h , \operatorname{div} \mathbf{a}_h ) &= 0
		\quad \forall\, q_h \in M_h .
	\end{align}
\end{subequations}

\begin{lemma}\label{biogu-stokes-lemma} 
	Let $(\mathbf{a}, d) \in H_0^1(\Omega) \times L_0^2(\Omega)$ be the solutions of (\ref{biogu-stokes}) 
	and $(\mathbf{a}_h, d_h) = \mathcal{S}_h(\mathbf{a}, d) \in Y_h \times M_h$ 
	be the Stokes projections defined by (\ref{biogu-stokes-projection}), respectively. Then \cite{scott2008,galdi2011}
	\begin{equation}
		\|\mathbf{a} - \mathbf{a}_h\|_{L^2} 
		+ h \|\mathbf{a} - \mathbf{a}_h\|_{H^1} 
		+ h \|d - d_h\|_{L^2}
		\le C h^{2}\bigl( \|\mathbf{a}\|_{H^2} + \|d\|_{H^1} \bigr).
	\end{equation}
\end{lemma}

If (\ref{biogu-stokes-regularity}) also holds, then the right-hand side is bounded by $Ch^{2}\|\mathbf{g}\|_{L^2}$ and
\begin{align}
	\|\mathbf{g}\|_{*} \le C \|\nabla \mathbf{a}\|_{L^2} \le Ch \|\mathbf{g}\|_{L^2} + C \|\nabla \mathbf{a}_{h}\|_{L^2},\\
	|||\mathbf{a} - \mathbf{a}_{h}|||:= 
	\|\mathbf{a} - \mathbf{a}_{h}\|_{L^{\infty}}
	+ \|\nabla(\mathbf{a} - \mathbf{a}_{h})\|_{L^{3}}
	\le C \|\mathbf{g}\|_{L^2}.
\end{align}

Next, we will give the optimal error estimates by the following theorem.
\begin{theorem}\label{biogu-theorem-optimal}
	 	Let $(\eta^i, \sigmaa^i, \mathbf{u}^i)$ denote the exact solutions of the continuous system \eqref{biosav-rremodel}, and let $(\eta_h^i, \sigmaa_h^i, \mathbf{u}_h^i)$ be the corresponding discrete solutions obtained from the numerical scheme \eqref{biogu-1}--\eqref{biogu-7}.  The following error estimates hold:
	\begin{align}\label{bogu-optimal}
		\|e^{n+1}_\u\|_*+\|e^{n+1}_\eta\|^2_{L^2}+\|e^{n+1}_\sigmaa\|^2_{L^2}+\| \nabla \a_h^{n+1}\|^2_{L	^2}\leq C(\tau^2+h^4).
	\end{align}
\end{theorem}

 Let $(\a^{n+1},d^{n+1})$ and $(\a_h^{n+1},d_h^{n+1})$ be the solutions of Stokes equations (\ref{biogu-stokes}) and (\ref{biogu-stokes-projection}) with $\g=e^{n+1}_\u$. Then Lemma \ref{biogu-stokes-lemma} yields a crucial inequality
 \begin{align}\label{biogu-stokes-reguliarity2}
	\|\a^{n+1}-\a^{n+1}_h\|_{L^2} + h\|\nabla (\a^{n+1}-\a^{n+1}_h )\|_{L^2} +h\|\nabla (d^{n+1}-d^{n+1}_h )\|_{L^2} \le C h^2 \|e^{n+1}_\u\|_{L^2}.
 \end{align}
 
Notice the $\a^{n+1}_h$ is discrete divergence free, then $(\nabla \rho_h^{n+1},\a^{n+1}_h)=0$, so
\begin{align}
	(\hat{e}^{n+1}_\u-e^{n}_\u,\a_h^{n+1})&=(e^{n+1}_\u-e^n_\u+\nabla \rho_h^{n+1},\a_h^{n+1})\\
	&=(e^{n+1}_\u-e^n_\u,\a_h^{n+1})\notag\\
	&=(-\Delta \a_h^{n+1}-\Delta \a_h^{n}+\nabla d_h^{n+1}+\nabla d_h^n,\a_h^{n+1})\notag\\
	&=\frac{1}{2}\big(\|\nabla \a_h^{n+1}\|^2_{L^2} - \|\nabla \a_h^{n}\|^2_{L^2}+ \|\nabla \a_h^{n+1}-\nabla \a_h^n\|^2_{L^2}\big).\notag
\end{align}

From (\ref{biogu-stokes-projection}), one has
\begin{align}
	(\nabla \hat{e}_{\u}^{n+1}, \nabla \a_h^{n+1}) 
= (\hat{e}_{\u}^{n+1}, e_{\u}^{n+1} - \nabla d_h^{n+1}) 
= \|e_{\u}^{n+1}\|_{L^2}^2 - (\hat{e}_{\u}^{n+1}, \nabla d_h^{n+1}).
\end{align}

Taking $\v_h=2\tau \a_h^{n+1} \in \Y_h$ in (\ref{biogu-errorequation3}), we have
\begin{align}\label{biogu-aquation}
		&\|\nabla \a_h^{n+1}\|^2_{L^2} - \|\nabla \a_h^{n}\|^2_{L^2}+ \|\nabla \a_h^{n+1}-\nabla \a_h^n\|^2_{L^2}+2\mu_3 \tau\|e^{n+1}_\u\|^2_{L^2}\\
	=&2\tau(\hat{e}_{\u}^{n+1}, \nabla d_h^{n+1})+2\tau(R^{n+1}_\u,\a_h^{n+1}) -2\tau(D_{\tau} \theta_\u^{n+1},\a_h^{n+1})+2\tau ( p^{n+1},\nabla \cdot \a^{n+1}_h)\notag\\
	&-2\tau b(\u^n,  \u^{n+1},\a_h^{n+1})+2\tau b(\u^n_h,  \hat{\u}^{n+1}_h,\a_h^{n+1})-2\tau(\eta^{n} \nabla \eta^{n+1},\a_h^{n+1})\notag\\
	&+2\tau(\eta^{n}_h \nabla \eta^{n+1}_h,\a_h^{n+1})-2\tau(\sigmaa^{n} (\nabla \cdot \sigmaa^{n+1}),\a_h^{n+1})+2\tau(\sigmaa^{n}_h (\nabla \cdot \sigmaa_h^{n+1}),\a_h^{n+1}).\notag
\end{align}

By using (\ref{biogu-6}), (\ref{biogu-stokes-regularity}) and (\ref{biogu-stokes-reguliarity2}), we can deduce
\begin{align}
	2\tau(\hat{e}_{\u}^{n+1}, \nabla d_h^{n+1}) =&2\tau (\nabla \rho_h^{n+1},\nabla d_h^{n+1})\\
	=&2\tau(\nabla \rho^{n+1}_h,\nabla(d_h^{n+1}-d^{n+1})+\nabla d^{n+1})\notag\\
	\leq &C \tau \| \nabla \rho^{n+1}_h\|^2_{L^2}+\epsilon_5 \tau \| e^{n+1}_\u\|^2_{L^2}+C\tau h^2 \| e^{n+1}_\u\|^2_{L^2}.\notag
\end{align}

With the application of the H\"{o}lder inequality and Young inequalities, we can get
\begin{align}
	2\tau(R^{n+1}_\u,\a_h^{n+1})
	\leq &C \tau  \|R^{n+1}_\u\|^2_{L^2}+C \tau \| \nabla \a_h^{n+1}\|^2_{L^2}.\\
	-2\tau(D_{\tau} \theta_\u^{n+1},\a_h^{n+1})
	\leq &C \tau \| D_{\tau} \theta_\u^{n+1}\|^2_{L^2}+C \tau \| \nabla \a_h^{n+1}\|^2_{L^2}\\
	\leq &C \tau  h^4\| D_{\tau} \u^{n+1}\|^2_{L^2}+C \tau \| \nabla \a_h^{n+1}\|^2_{L^2}.\notag
\end{align}

Next we notice that $\a^{n+1}_h$
is discrete divergence free and $\a^{n+1}$ is divergence free, and by using  Lemma \ref{biogu-stokes-lemma}, the projection error (\ref{biogu-projection-error}) and (\ref{biogu-stokes-regularity}), we can derive
\begin{align}
	2\tau(p^{n+1},\a^{n+1}_h)=&2\tau(p^{n+1}-P_{2} p^{n+1},\nabla \cdot (\a^{n+1}_h-\a^{n+1}))\\
	\leq &C\tau h \| p^{n+1}-P_{2} p^{n+1}\|_{L^2} \|\a^{n+1}\|_{H^2}\notag\\
	\leq & C \tau h^4 +\epsilon_5\tau \|e^{n+1}_\u\|^2_{L^2}.
\end{align}

After a simple rearrangement, by (\ref{biogu-regularity}), (\ref{biogu-u-infty}), projection error (\ref{biogu-projection-error}), we deduce that
\begin{align}
&-2\tau b(\u^n,  \u^{n+1},\a_h^{n+1})+2\tau b(\u^n_h,  \hat{\u}^{n+1}_h,\a_h^{n+1})\\
	=& - 2\tau b(\u^n-\u^n_h,\u^{n+1},\a_h^{n+1})-2\tau b(\u^n_h,\u^{n+1}-\hat{\u}^{n+1}_h,\a_h^{n+1})\notag\\
	=&- 2\tau b(\u^n-\u^n_h,\u^{n+1},\a_h^{n+1})+2\tau b(\u^n_h, \hat{e}^{n+1}_\u, \a_h^{n+1})\notag\\
	=&- 2\tau b(\theta^n_\u,\u^{n+1},\a_h^{n+1})- 2\tau b(e^n_\u-e^{n+1}_\u,\u^{n+1},\a_h^{n+1})\notag\\
	&- 2\tau b(e^{n+1}_\u,\u^{n+1},\a_h^{n+1})+2\tau b(\u^n_h, \hat{e}^{n+1}_\u, \a_h^{n+1})\notag\\
	\leq &C \tau h^4 +C \tau \| \nabla \a_h^{n+1}\|^2_{L^2}+C\tau \| \nabla \rho^{n+1}_h\|^2_{L^2}\notag\\
	&+\epsilon_5\tau \| e^{n+1}_\u\|^2_{L^2}+C\tau\|e^{n+1}_\u-e^n_\u\|^2_{L^2}.\notag
\end{align}

After a simple rearrangement, in terms of  (\ref{biogu-regularity}), (\ref{biogu-sigma-infty}), and projection error (\ref{biogu-projection-error}), we have
\begin{align}\label{biogu-0017}
	&-\big(2 \tau(\eta^{n} \nabla \eta^{n+1},\a_h^{n+1})-2\tau(\eta^{n}_h \nabla \eta^{n+1}_h,\a_h^{n+1})\big)\\
	\leq &2 \tau|\big(   (\eta^n-\eta^n_h) \nabla \eta^{n+1}, \a_h^{n+1}\big)|-2\tau\big(\eta^n_h \nabla(\eta^{n+1}-\eta^{n+1}_h),\a_h^{n+1}\big)\notag\\
	\leq &C \tau \|e^n_\eta+\theta^n_\eta\|_{L^2}\|\nabla \eta^{n+1}\|_{L^{3}} \|\a_h^{n+1}\|_{L^6}\notag\\
	&+2\tau|\big(\eta^n_h \nabla \theta_\eta^{n+1},\a_h^{n+1}\big)| -C\tau \|\eta^n_h\|_{L^\infty} \| \nabla e_\eta^{n+1}\|_{L^2}\|\a_h^{n+1}\|_{L^2}\notag\\
	\leq &C \tau h^4 +C \tau \| e^n_\eta\|^2_{L^2}+C \tau \|  \nabla \a_h^{n+1} \|^2_{L^2}+\epsilon_6 \tau \|\nabla e^{n+1}_\eta\|_{L^2}^2\notag\\
	&+2\tau|\big(\eta^n_h \nabla \theta_\eta^{n+1},\a_h^{n+1}\big)|,\notag
\end{align}

By (\ref{biogu-eta-w13}), projection error (\ref{biogu-projection-error}) and integrate by parts (IBP), we can deduce that
\begin{align}\label{biogu-0018}
	2\tau|(\eta^n_h\nabla \theta^{n+1}_\eta,\a_h^{n+1})|=&|-2\tau(\nabla \eta^n_h \theta^{n+1}_\eta,\a_h^{n+1})-2\tau (\eta^n_h \theta^{n+1}_\eta, \nabla \a_h^{n+1})|\\
	\leq &C\tau \| \nabla \eta^n_h\|_{L^3}\|\theta^{n+1}_\eta\|_{L^2} \|\a_h^{n+1}\|_{L^6}+C \tau \| \eta^n_h\|_{L^{\infty}} \| \theta^{n+1}_\eta\|_{L^2}\| \nabla  \a_h^{n+1}\|_{L^2}\notag\\
	\leq & C \tau h^4 + C\tau \| \nabla \a_h^{n+1}\|^2_{L^2},\notag
\end{align}

Combining (\ref{biogu-0017}) and (\ref{biogu-0018}), it follows that
\begin{align}
	&-\big(2 \tau(\eta^{n} \nabla \eta^{n+1},\a_h^{n+1})-2\tau(\eta^{n}_h \nabla \eta^{n+1}_h,\a_h^{n+1})\big)\\
\leq &C \tau h^4 +C \tau \| e^n_\eta\|^2_{L^2}+C \tau \|  \nabla \a_h^{n+1} \|^2_{L^2}+\epsilon_6 \tau \|\nabla e^{n+1}_\eta\|_{L^2}^2.\notag
\end{align}

Furthermore
\begin{align}\label{biogu-0021}
&-2\tau(\sigmaa^{n} (\nabla \cdot \sigmaa^{n+1}),\a_h^{n+1})
	+2\tau(\sigmaa^{n}_h (\nabla \cdot \sigmaa_h^{n+1}),\a_h^{n+1})\\
	=&-2\tau \big( (\sigmaa^n-\sigmaa^n_h) \a_h^{n+1},\nabla \cdot \sigmaa^{n+1}  \big)  -2\tau \big(\sigmaa^n_h \a_h^{n+1},\nabla \cdot (\sigmaa^{n+1}-\sigmaa^{n+1}_h) \big) \notag \\
	\leq &C \tau\|e^n_\sigmaa+\theta^n_\sigmaa\|_{L^2}\|\a_h^{n+1}\|_{L^6} \|\nabla \cdot \sigmaa^{n+1}\|_{L^3}\notag\\
	&-C\tau \|\sigmaa^n_h\|_{L^3} \|\a_h^{n+1}\|_{L^6}\|\nabla \cdot e^{n+1}_\sigmaa\|_{L^2} -2\tau \big( \sigmaa^n_h \a_h^{n+1},\nabla \cdot \theta^{n+1}_\sigmaa \big)\notag\\
	\leq & C \tau h^4+C \tau \|e^n_\sigmaa\|^2_{L^2}+ C\tau \| \nabla \a_h^{n+1}\|^2_{L^2}+\epsilon_7 \tau \|\nabla \cdot e^{n+1}_\sigmaa\|_{L^2}^2 \notag\\
	&-2\tau \big( \sigmaa^n_h \a_h^{n+1},\nabla \cdot \theta^{n+1}_\sigmaa \big).\notag
\end{align}
where by using IBP, (\ref{biogu-sigma-infty}) and (\ref{biogu-sigma-w13}), it follows that
\begin{align}\label{biogu-0022}
	-2\tau \big( \sigmaa^n_h \a_h^{n+1},\nabla \cdot \theta^{n+1}_\sigmaa \big)
	=&2\tau \big(\nabla \sigmaa^n_h \a_h^{n+1}, \theta^{n+1}_\sigmaa \big)
	+2\tau \big( \sigmaa^n_h \nabla \a_h^{n+1}, \theta^{n+1}_\sigmaa \big)\\
	\leq& C \tau \| \nabla \sigmaa^n_h \|_{L^3}\|\a_h^{n+1}\|_{L^6}\|\theta^{n+1}_\sigmaa\|_{L^2}+C\tau\|  \sigmaa^n_h \|_{L^{\infty}}\| \nabla \a_h^{n+1}\|_{L^2}\|\theta^{n+1}_\sigmaa\|_{L^2}\notag\\
	\leq &C \tau h^4+C\tau\| \nabla \a_h^{n+1}\|_{L^2}^2.\nonumber
\end{align}
thus, combining (\ref{biogu-0021}) and (\ref{biogu-0022}), is follows that
\begin{align}
&-2\tau(\sigmaa^{n} (\nabla \cdot \sigmaa^{n+1}),\a_h^{n+1})
+2\tau(\sigmaa^{n}_h (\nabla \cdot \sigmaa_h^{n+1}),\a_h^{n+1})\\
=&C \tau h^4+C \tau \|e^n_\sigmaa\|^2_{L^2}+ C\tau \| \nabla \a_h^{n+1}\|^2_{L^2}+\epsilon_7 \tau \|\nabla \cdot e^{n+1}_\sigmaa\|_{L^2}^2.\notag
\end{align}
Substituting the above estimates into (\ref{biogu-aquation}), we can derive that
\begin{align}\label{biogu-31}
	&\|\nabla \a_h^{n+1}\|^2_{L^2} - \|\nabla \a_h^{n}\|^2_{L^2}+ \|\nabla \a_h^{n+1}-\a_h^n\|^2_{L^2}+\mu_3 \tau\|e^{n+1}_\u\|^2_{L^2}\\
	\leq & C \tau \| \nabla \a_h^{n+1}\|_{L^2}^2+ C\tau h^4+C\tau \| \nabla \rho^{n+1}_h\|^2_{L^2}+C\tau\| R^{n+1}_\u\|^2_{L^2}\notag\\
	&+C\tau h^4 \| D_{\tau}\u^{n+1}\|^2_{L^2}+C\tau\|e^{n+1}_\u-e^n_\u\|^2_{L^2}+C \tau ( \|e^n_\eta\|^2_{L^2}+\|e^n_\sigmaa\|^2_{L^2})\notag\\
	&+\epsilon_5\tau \|e^{n+1}_\u\|^2_{L^2}+C\tau h^2\|e^{n+1}_\u\|^2_{L^2}+ \epsilon_6\tau \|\nabla e^{n+1}_\eta\|^2_{L^2}+\epsilon_7 \tau \|\nabla \cdot e^{n+1}_\sigmaa\|_{L^2}^2.\notag
\end{align}

By using (\ref{biogu-6}), H\"{o}lder inequality and Young inequality, we have
\begin{align}\label{biogu-32}
	&2\tau\big(\eta^n_h \hat{e}_\u^{n+1},\nabla e^{n+1}_\eta\big)+2\tau\big(\sigmaa^n_h \hat{e}^{n+1}_\u,\nabla \cdot e^{n+1}_\sigmaa\big)\\
	\leq & C\tau \| \nabla \rho^{n+1}_h\|^2_{L^2}+   \epsilon_5\tau \|e^{n+1}_\u\|^2_{L^2}+\epsilon_6\tau \|\nabla e^{n+1}_\eta\|^2_{L^2}+\epsilon_7 \tau \|\nabla \cdot e^{n+1}_\sigmaa\|_{L^2}^2.   \notag
\end{align}

Setting $(r_h,\w_h)=2\tau(e^{n+1}_\eta,e^{n+1}_\sigmaa)$ in (\ref{biogu-errorequation11}), (\ref{biogu-errorequation22}) and combining (\ref{biogu-eta-errorr-esult}), (\ref{biogu-sigma-errorr-esult}), (\ref{biogu-31}) and (\ref{biogu-32}), for sufficiently small $h, \epsilon_5,\epsilon_6,\epsilon_7$, we can have
\begin{align}
	&\|\nabla \a_h^{n+1}\|^2_{L^2} - \|\nabla \a_h^{n}\|^2_{L^2}+ \|\nabla \a_h^{n+1}-\nabla\a_h^n\|^2_{L^2}+\mu_3 \tau\|e^{n+1}_\u\|^2_{L^2}\\
	&+\| e^{n+1}_\eta\|^2_{L^2}-\| e^n_\eta\|^2_{L^2}+\|  e^{n+1}_\eta- e^{n}_\eta\|^2_{L^2}+\mu_1 \tau \| \nabla e^{n+1}_\eta\|^2_{L^2}\notag\\
	&+\| e^{n+1}_\sigmaa\|^2_{L^2}-\| e^n_\sigmaa\|^2_{L^2}+\|  e^{n+1}_\sigmaa- e^{n}_\sigmaa\|^2_{L^2}\notag\\
	&+\mu_2 \tau \| \nabla \cdot e^{n+1}_\sigmaa\|^2_{L^2}+\mu_2 \tau \| \nabla \times e^{n+1}_\sigmaa\|^2_{L^2}+2\tau \|  e^{n+1}_\sigmaa\|^2_{L^2}\notag\\
	\leq &	
  C\tau h^4+C\tau \| \nabla \rho^{n+1}_h\|^2_{L^2}+C\tau\|e^{n+1}_\u-e^n_\u\|^2_{L^2}\notag\\
 &+C \tau( \| \nabla \a_h^{n+1}\|_{L^2}^2+ \| e^n_\sigmaa\|^2_{L^2}+\| e^n_\eta\|^2_{L^2}+\| e^{n+1}_\eta\|^2_{L^2}+\| e^{n+1}_\sigmaa\|^2_{L^2})\notag\\
 &+C \tau h^4( \| D_{\tau} \sigmaa^{n+1}\|^2_{L^2}+D_{\tau}\u^{n+1}\|^2_{L^2}+D_{\tau}\eta^{n+1}\|^2_{L^2})\notag\\
 &+C\tau( \|R^{n+1}_\sigmaa\|^2_{L^2}+\|R^{n+1}_\eta\|^2_{L^2}+\|R^{n+1}_\u\|^2_{L^2})\notag
\end{align}

Summing up from $n=1$ to $N-1$ and by using (\ref{biogu-trunction-error}), (\ref{biogu-i=N}), we can deduce
\begin{align}
	&\| \nabla \a_h^{n+1}\|^2_{L	^2}+\|e^{n+1}_\eta\|^2_{L^2}+\|e^{n+1}_\sigmaa\|^2_{L^2}+\mu_{1}\tau\sum_{n=0}^{N-1} \| \nabla e^{n+1}_\eta\|^2_{L^2}+\mu_{3}\tau\sum_{n=0}^{N-1} \|  e^{n+1}_\u\|^2_{L^2}\\
	&+\mu_{2}\tau\sum_{n=0}^{N-1} \| \nabla \cdot e^{n+1}_\sigmaa\|^2_{L^2}+\mu_{2}\tau\sum_{n=0}^{N-1} \| \nabla \times e^{n+1}_\sigmaa\|^2_{L^2}+2\tau \sum_{n=0}^{N-1} \|  e^{n+1}_\sigmaa\|^2_{L^2}\notag\\
	\leq & C\tau \sum_{n=0}^{N-1}( \| \nabla \a_h^{n+1}\|_{L^2}^2+\| e^{n+1}_\eta\|^2_{L^2}+\| e^{n+1}_\sigmaa\|^2_{L^2})+C(\tau^2+h^4)+C\tau (\tau+h^4).\notag
\end{align}

By using Gronwall's inequality, we have 
\begin{align}
\| \nabla \a_h^{n+1}\|^2_{L	^2}+\|e^{n+1}_\eta\|^2_{L^2}+\|e^{n+1}_\sigmaa\|^2_{L^2}\leq C(\tau^2+h^4).
\end{align}

According to (\ref{biogu-dualinequality}), one has
\begin{align}
	\| e^{n+1}_\u\|_* \leq \| \nabla \a_h^{n+1}\|_{L^2}.
\end{align}
so we end with (\ref{bogu-optimal}). The proof of Theorem \ref{biogu-theorem-optimal} is completed.
\section{Numerical experiences}

\subsection{Convergence test}\label{biogu-convergence-test}
In this subsection, we first verify the accuracy of the proposed numerical scheme. The computational domain is taken as
\[
\Omega = \{(x,y) \in \mathbb{R}^2 : x \in [0,1], \, y \in [0,1]\},
\]
and the model parameters are set as $\mu_1 = \mu_2 = \mu_3 = 1$. The final simulation time is chosen as $T = 1$, and the exact solutions are prescribed as
\begin{align}
	\begin{cases} 
		\eta(x,y,t) = \cos(2 \pi x) \cos(\pi y) \sin(t), \\ 
			c(x,y,t) = \cos(\pi x) \cos(2 \pi y) \sin(t), \\ 
		\sigmaa(x,y,t) = \nabla c = \begin{pmatrix}  -\pi \sin(\pi x) \cos(2 \pi y) \sin(t) \\ -2 \pi \cos(\pi x) \sin(2 \pi y) \sin(t) \end{pmatrix}, \\ 
		\mathbf{u}(x,y,t) = \begin{pmatrix} \sin(\pi x) \cos(\pi y) \sin(t)\\ -\cos(\pi x) \sin(\pi y) \sin(t) \end{pmatrix}, \\ 
		p(x,y,t) = \cos(\pi x) \cos(\pi y) \sin(t).
	\end{cases}
\end{align}

We denote the $L^2$-norm errors at the final time $t_N$ as
\begin{align*}
	\| r - r_h \|_{L^2} &= \| r(t_N) - r_h^N \|_{L^2}, \\
	\| \mathbf{v} - \mathbf{v}_h \|_{L^2} &= \| \mathbf{v}(t_N) - \mathbf{v}_h^N \|_{L^2}.
\end{align*}

To verify the temporal convergence, the spatial step size is fixed at $h = 1/200$, and the temporal step sizes are varied as $\tau = 1/4, 1/8, \dots, 1/64$. The corresponding temporal $L^2$-norm errors of velocity $\u$, pressure $p$, cell density $\eta$, and chemical concentration $c$ are presented in Fig.~\ref{biogu-table-h200} and Table~\ref{biogu-fig-h200}, indicating that the temporal convergence rates are $\mathcal{O}(\tau)$ for all variables. 

Furthermore, to examine the spatial convergence, the temporal step size is fixed at $\tau = 0.001$, and the spatial step sizes are chosen as $h = 1/4, 1/8, \dots, 1/64$. The $L^2$-norm errors of the velocity $\u$, pressure $p$, cell density $\eta$, and chemical concentration $c$ are displayed in Fig.~\ref{biogu-table-tau1000} and Table~\ref{biogu-fig-tau1000}. These results demonstrate that the spatial convergence rates are $\mathcal{O}(h^2)$ for $\u$, $\eta$, and $c$, and $\mathcal{O}(h)$ for $p$, which are in excellent agreement with the theoretical predictions.

\begin{table}[htbp] 
	\centering  
	\caption{$L^2$ errors and convergence rates of $(\mathbf{u},p,\eta,c)$ wtih $h=200$}   
	\rowcolors{1}{gray!20}{white} 
	\begin{tabular}{ccccccccc}          
		$\tau$ & $\| \u-\u_h\|_{L^2}$ & rate  & $\| p-p_h\|_{L^2}$   & rate  & $\| \eta-\eta_h\|_{L^2}$  & rate  & $\| c-c_h\|_{L^2}$ & rate 
		\\    1/4   & 0.009004 &       & 0.858101 &       & 0.019537 &       & 0.016358 & 
		\\    1/8   & 0.004565 & 0.98  & 0.444491 & 0.95  & 0.009271 & 1.08  & 0.00836 & 0.97  \\    1/16  & 0.00233 & 0.97  & 0.223035 & 0.99  & 0.004508 & 1.04  & 0.004236 & 0.98  \\    1/32  & 0.001106 & 1.08  & 0.109997 & 1.02  & 0.002225 & 1.02  & 0.002138 & 0.99  \\    1/64  & 5.26E-04 & 1.07  & 5.46E-02 & 1.01  & 1.11E-03 & 1.00  & 0.00108 & 0.98  \\    \end{tabular}
	\label{biogu-table-h200}
\end{table}%

\begin{table}[htbp]  
	\centering  
	\caption{$L^2$ errors and convergence rates of $(\mathbf{u},p,\eta,c)$ wtih $\tau=1000$}   
	\rowcolors{1}{gray!20}{white} 
	\begin{tabular}{ccccccccc}          
		h& $\| \u-\u_h\|_{L^2}$ & rate  & $\| p-p_h\|_{L^2}$   & rate  & $\| \eta-\eta_h\|_{L^2}$  & rate  & $\| c-c_h\|_{L^2}$ & rate 
		\\    1/4   & 0.081541 &       & 0.176033 &       & 0.130127 &       & 0.129293 &  
		\\    1/8   & 0.020719 & 1.98  & 0.104231 & 0.76  & 0.039501 & 1.72  & 0.038345 & 1.75  
		\\    1/16  & 0.005172 & 2.00  & 0.054895 & 0.93  & 0.01048 & 1.91  & 0.010105 & 1.92  
		\\    1/32  & 0.001289 & 2.00  & 0.029247 & 0.91  & 0.002678 & 1.97  & 0.002575 & 1.97  \\    1/64  & 3.20E-04 & 2.01  & 1.64E-02 & 0.83  & 6.89E-04 & 1.96  & 0.00066 & 1.96  \\    \end{tabular}  
	\label{biogu-table-tau1000}
\end{table}%

\begin{figure}
	\centering
	\includegraphics[width=0.7\linewidth]{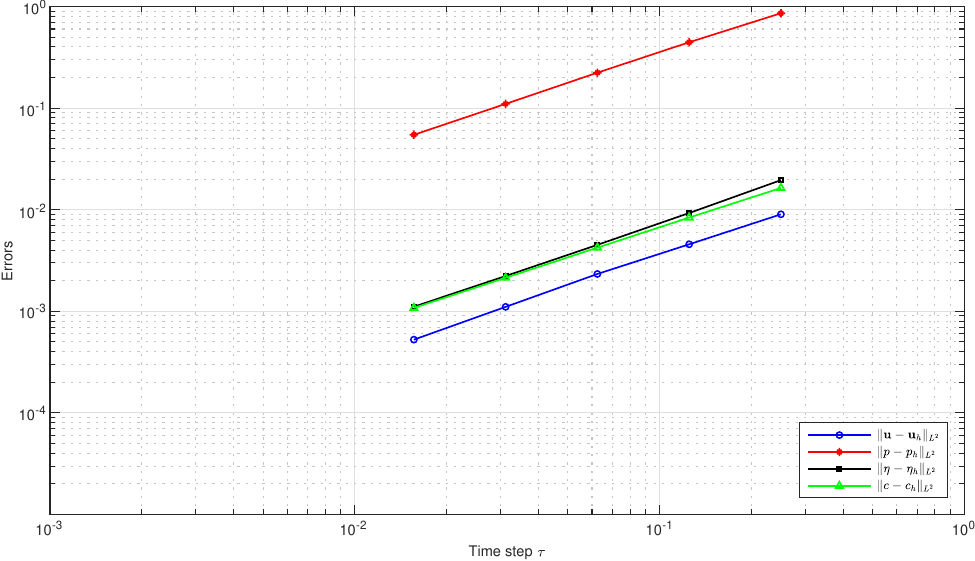}
	\caption{Convergence history of $(\u,p,\eta,c)$ for different $\tau$.}
	\label{biogu-fig-h200}
\end{figure}

\begin{figure}
	\centering
	\includegraphics[width=0.7\linewidth]{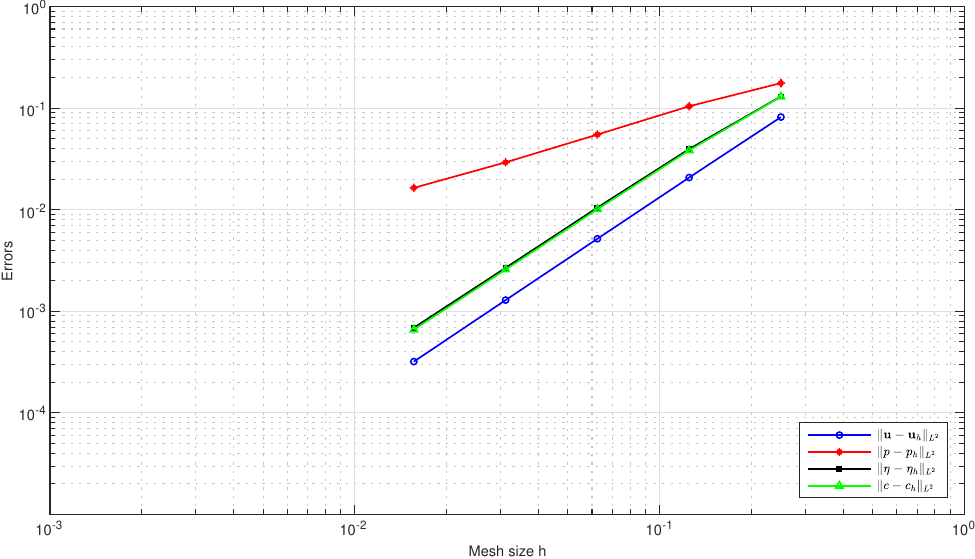}
	\caption{Convergence history of $(\u,p,\eta,c)$ for different $h$.}
	\label{biogu-fig-tau1000}
\end{figure}

\subsection{Stability test}
To numerically validate the unconditional energy stability of our proposed scheme, we systematically monitor the temporal evolution of the discrete free energy functional defined in (\ref{biogu-energy}). The computational setup used in the convergence test of Subsection \ref{biogu-convergence-test} is adopted here, except that the total simulation time is set to $T = 2.5$. Figure \ref{biogu-energy-figure} presents the computed energy trajectories under various initial conditions, including representative cases such as:
\begin{align}
\begin{cases} 
	\eta_0(x,y) = \cos(2\pi x) + \sin(2\pi y) + 3, \\ 
	c_0(x,y) = \cos(2\pi x) + \sin(2\pi y) - 2\pi y + 9, \\ 
	\sigma_0 (x,y)= \nabla c_0 = 2\pi\begin{pmatrix} -\sin(2\pi x) \\ \cos(2\pi y) - 1 \end{pmatrix}, \\ 
	\mathbf{u}_0(x,y) = \begin{pmatrix} \sin(2\pi y)(-\cos(2\pi x + \pi) - 1) \\ \sin(2\pi x)(\cos(2\pi y + \pi) + 1) \end{pmatrix}, \\ 
	p_0(x,y) = \cos(2\pi x) + \sin(2\pi y).
\end{cases}
\end{align}

Remarkably, for all tested parameter configurations, the discrete energy demonstrates a strictly monotonic decay throughout the entire temporal evolution, No spurious oscillations or artificial energy growth are observed at any stage of the simulation. This numerically observed behavior provides compelling evidence that our scheme successfully preserves the essential energy dissipation structure at the discrete level. The unconditional energy stability, which we rigorously established through theoretical analysis, is thus conclusively verified by the numerical experiment.

\begin{figure}
	\centering
	\includegraphics[width=1\linewidth]{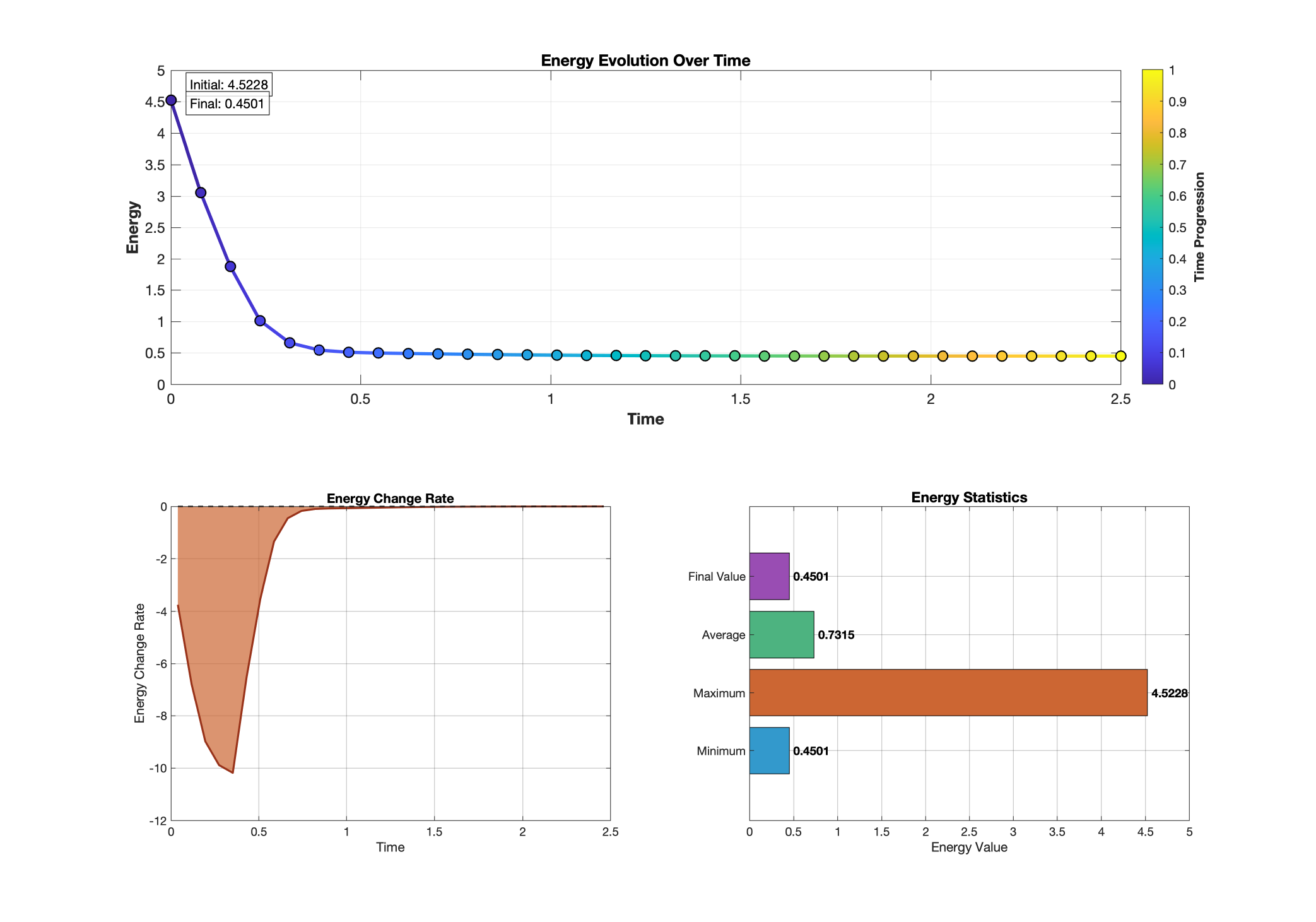}
	\caption{Time evolution of the free energy functional $\mathcal{E}^3 $}
	\label{biogu-energy-figure}
\end{figure}

\subsection{The chemo-repulsion behavior}
We employ previously developed schemes to conduct a series of simulations for chemo–repulsion dynamics.
In this example, we consider the computational domain $\Omega = [0,1] \times [0,1]$. The temporal step size is set to $\tau= 0.0001$, and the spatial mesh size is chosen as $h = 1/100$. The model parameters are taken as $\mu_1 =\mu_2=\mu_3=1$. The initial data are prescribed as follows  \cite{guillen2}:
\begin{align}
	\begin{cases}
		\eta_0(x,y) = -10\, x y (2 - x)(2 - y)\exp\!\left(-10(y-1)^2 - 10(x-1)^2\right) + 10.0001,\\
		c_0(x,y) = 200\, x y (2 - x)(2 - y)\exp\!\left(-30(y-1)^2 - 30(x-1)^2\right) + 0.0001,\\
		\u_0(x,y) = \textbf{0}, \qquad p_0(x,y)=0.
	\end{cases}
\end{align}

The numerical simulations corresponding to this setup are illustrated in Figure \ref{biogu-benckmark2-c} and \ref{biogu-benckmark2-eta}, which displays the temporal evolution of the cell density and the associated chemical concentration at different time step $k$. The results clearly demonstrate that the microorganisms tend to migrate toward regions with lower chemical concentration, highlighting the characteristic chemo-repulsion mechanism. This phenomenon provides valuable insight into the influence of repulsive chemotactic forces on the overall spatial distribution of the cell population.

\begin{figure}[htbp]
	\centering
	\begin{minipage}{0.30\textwidth}
		\centering
		\includegraphics[width=\textwidth]{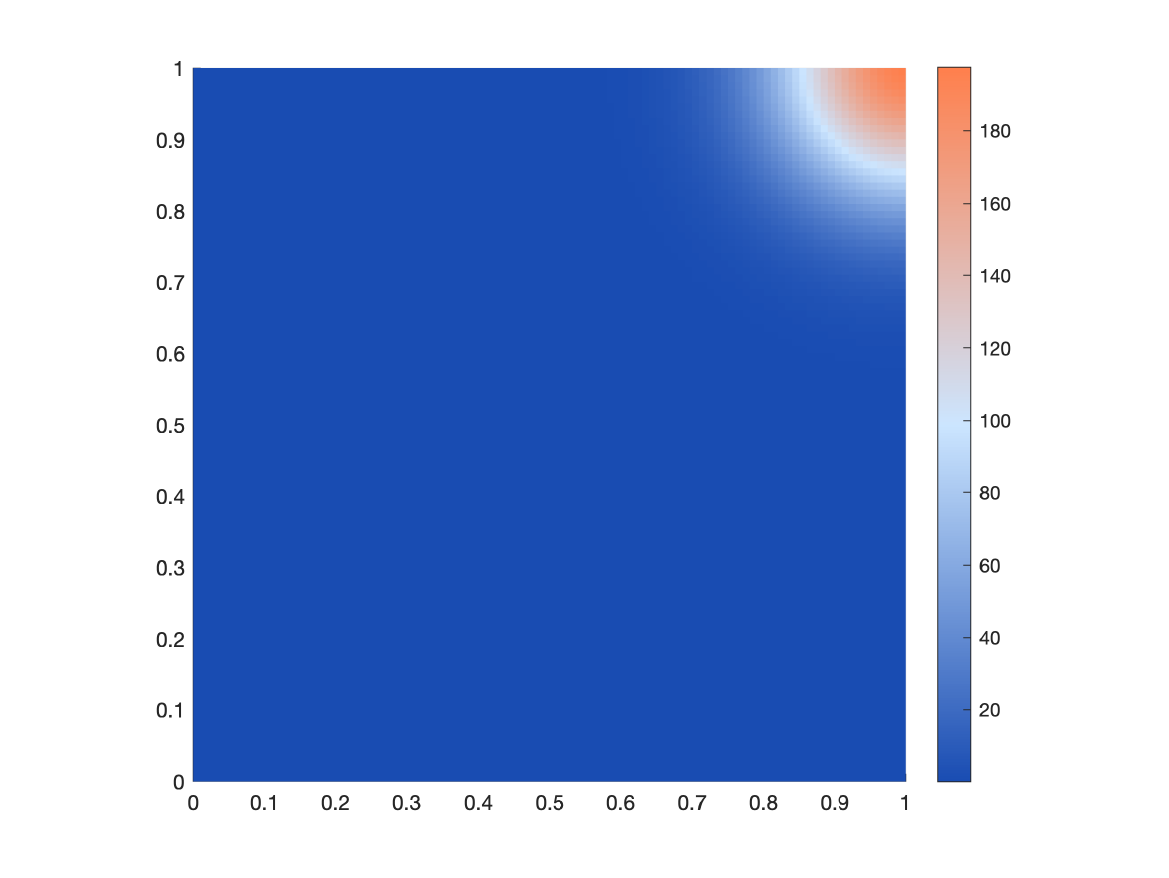}
	\end{minipage}
	\begin{minipage}{0.30\textwidth}
		\centering
		\includegraphics[width=\textwidth]{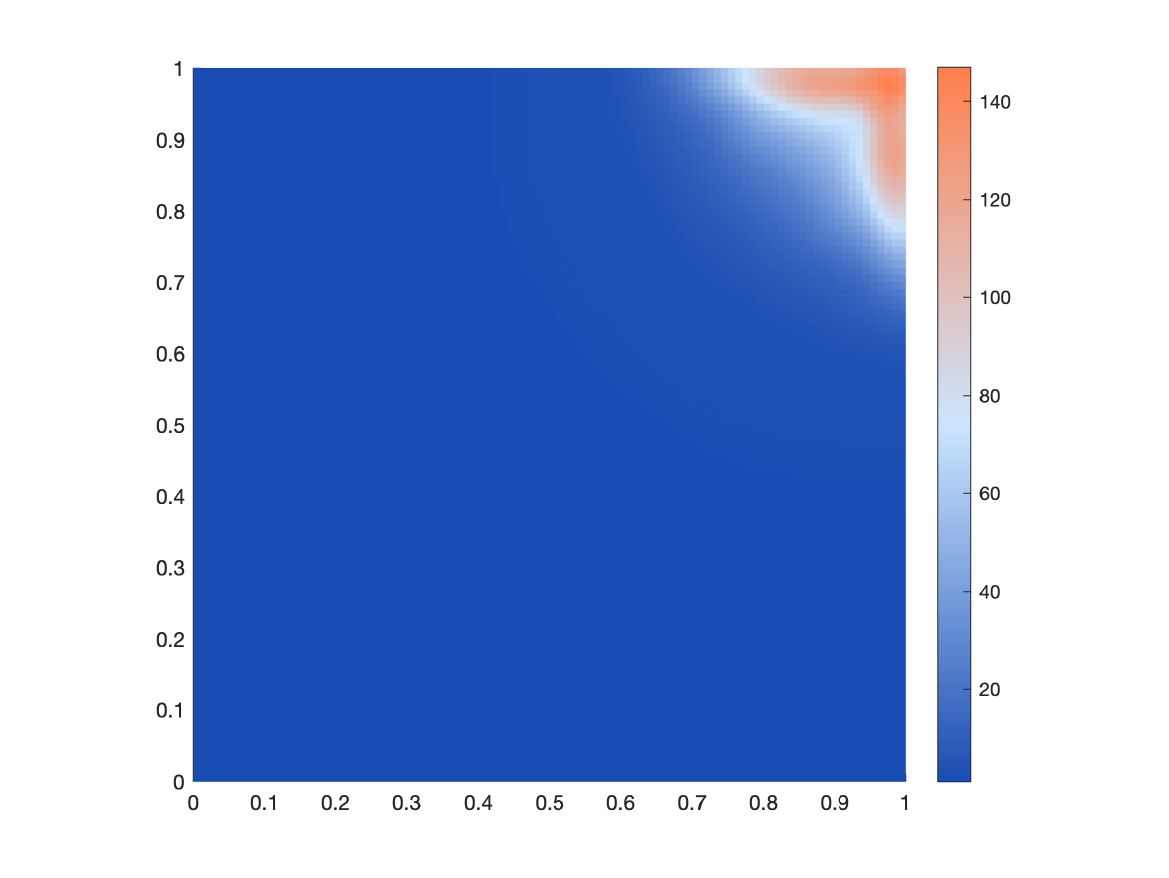}
	\end{minipage}
	\begin{minipage}{0.30\textwidth}
		\centering
		\includegraphics[width=\textwidth]{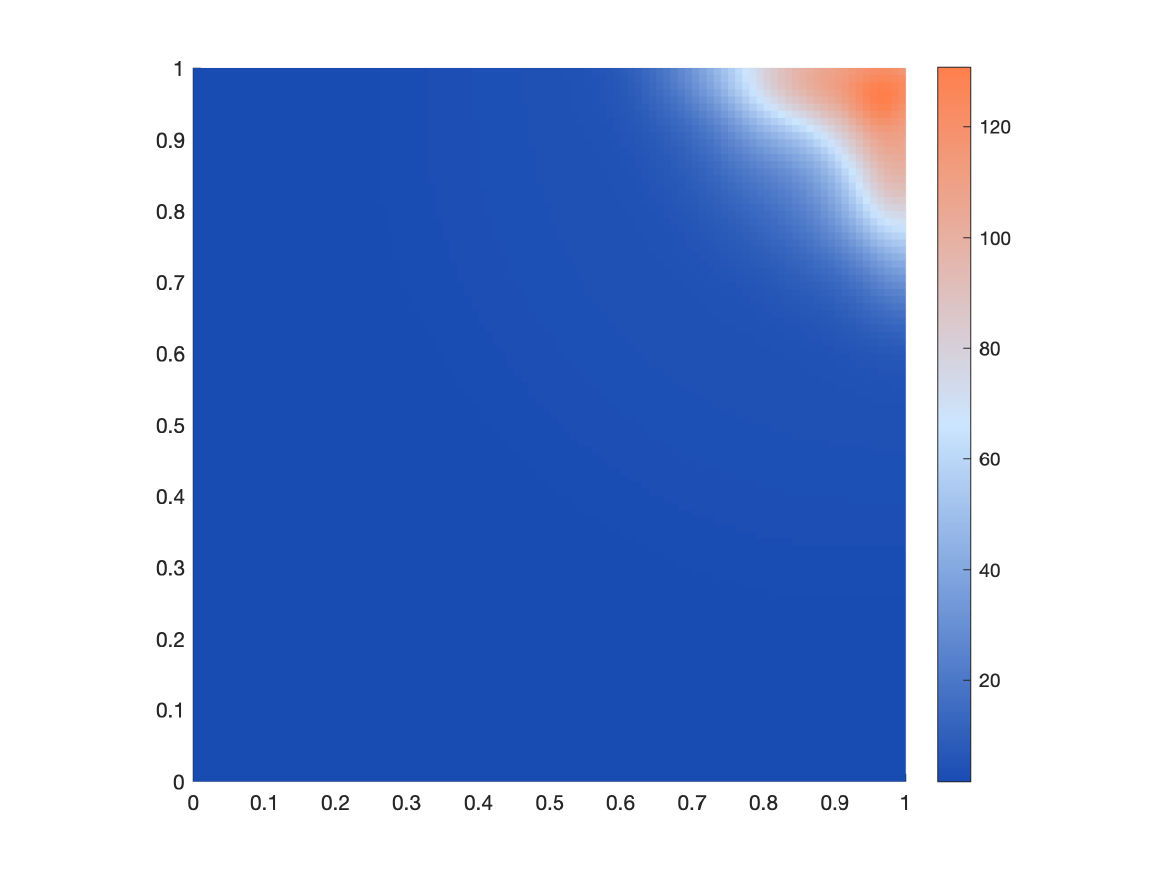}
	\end{minipage}
	
	\vspace{0.8em}
	\begin{minipage}{0.30\textwidth}
		\centering
		\includegraphics[width=\textwidth]{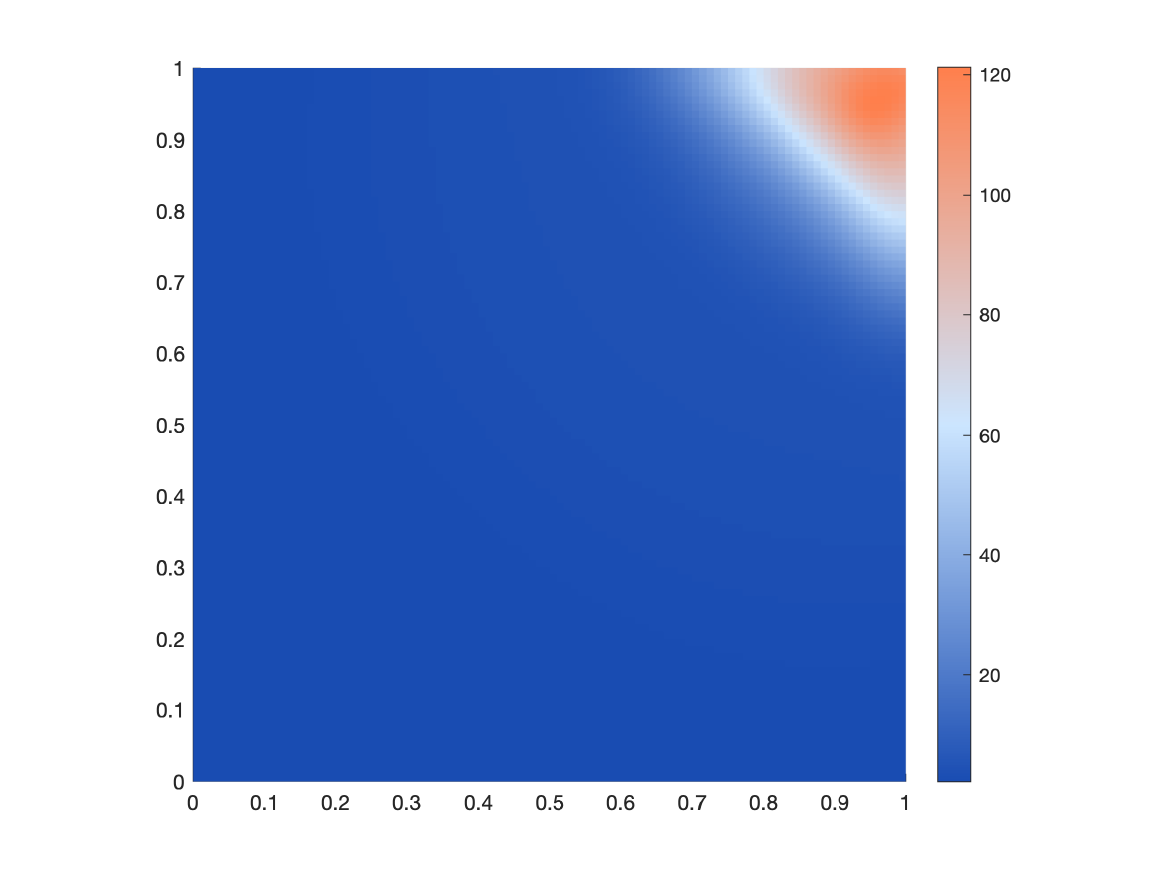}
	\end{minipage}
	\begin{minipage}{0.30\textwidth}
		\centering
		\includegraphics[width=\textwidth]{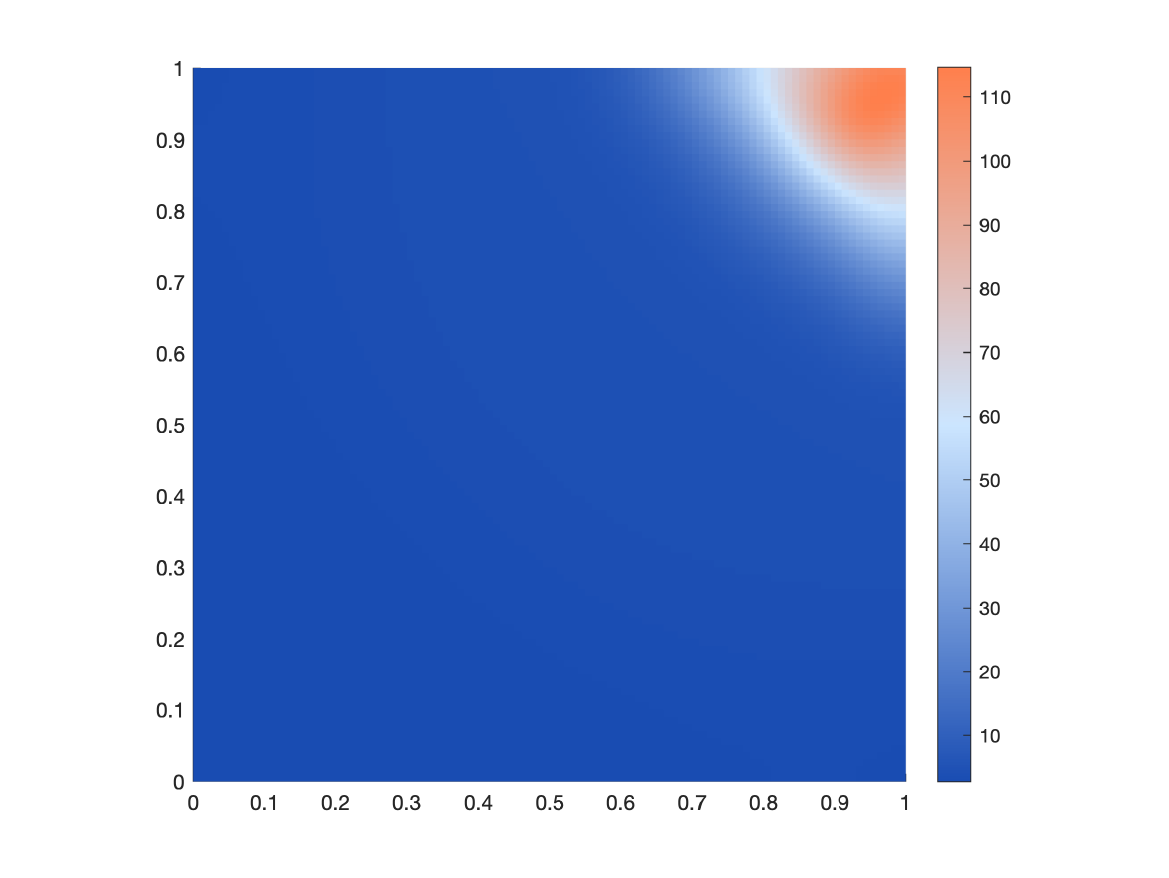}
	\end{minipage}
	\begin{minipage}{0.30\textwidth}
		\centering
		\includegraphics[width=\textwidth]{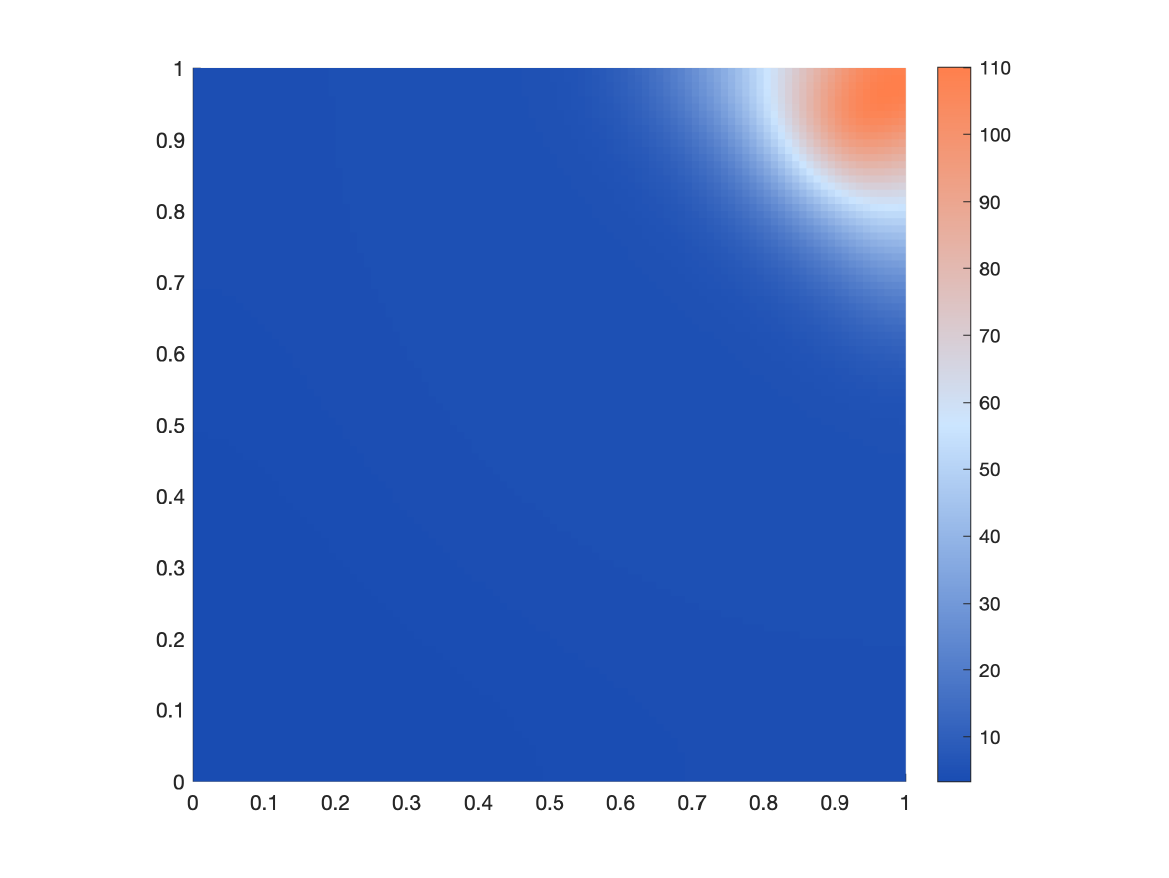}
	\end{minipage}
	
	\caption{Chemical concentration at $k=1,10,15,20,25,30$.}
	\label{biogu-benckmark2-c}
\end{figure}

\begin{figure}[htbp]
	\centering
	\begin{minipage}{0.30\textwidth}
		\centering
		\includegraphics[width=\textwidth]{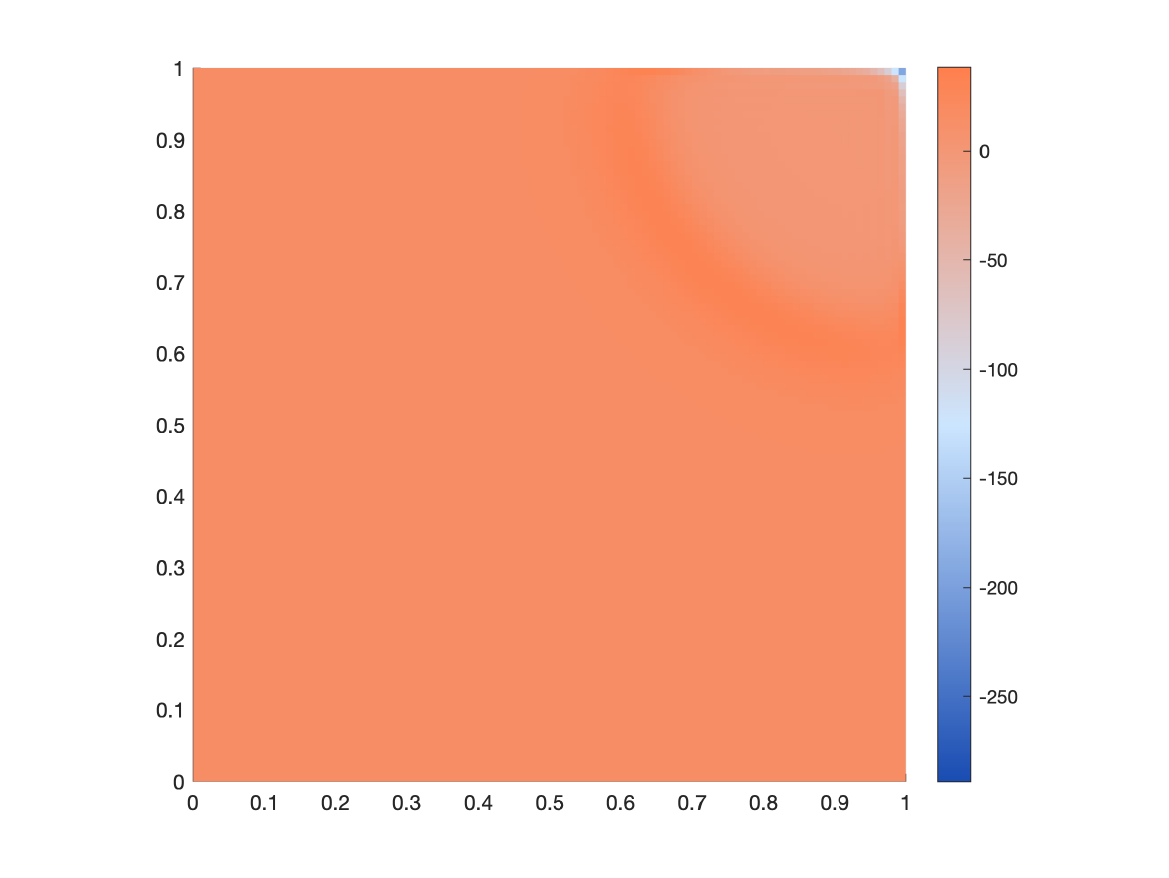}
	\end{minipage}
	\begin{minipage}{0.30\textwidth}
		\centering
		\includegraphics[width=\textwidth]{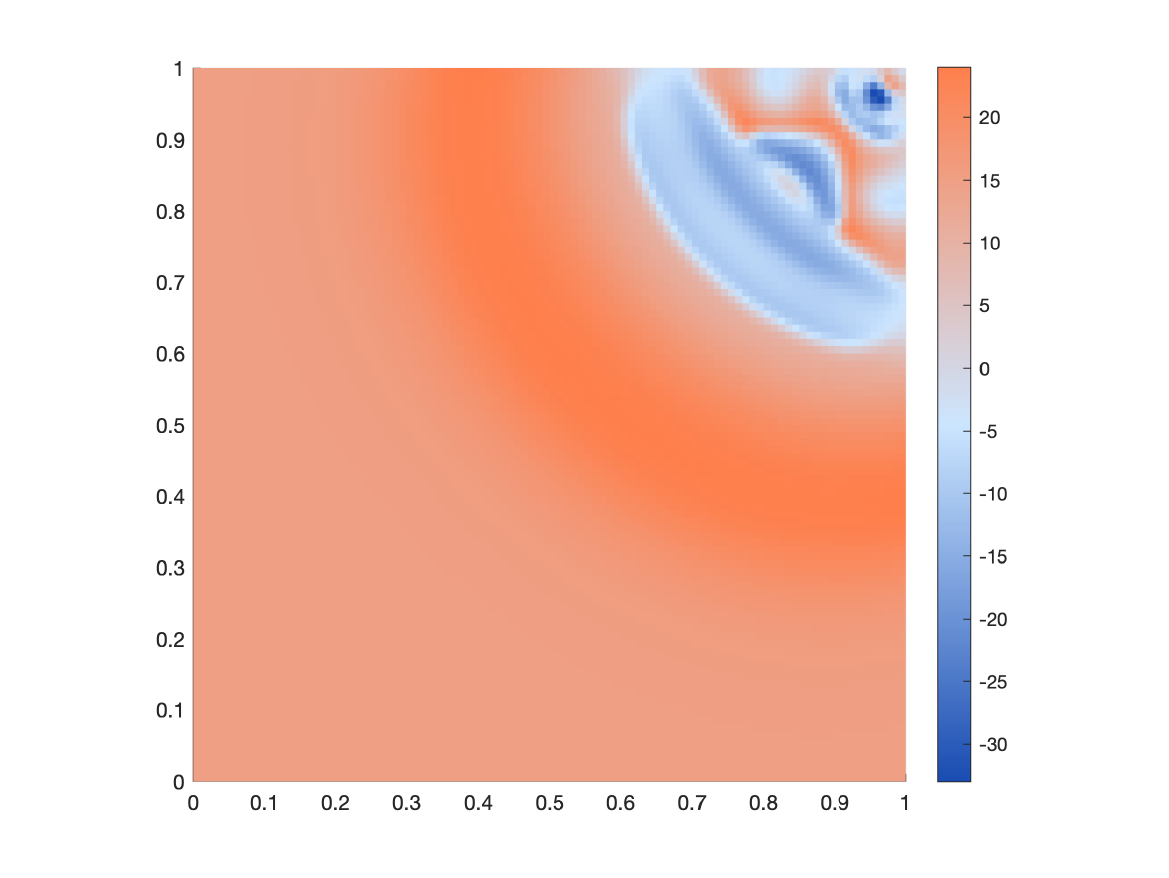}
	\end{minipage}
	\begin{minipage}{0.30\textwidth}
		\centering
		\includegraphics[width=\textwidth]{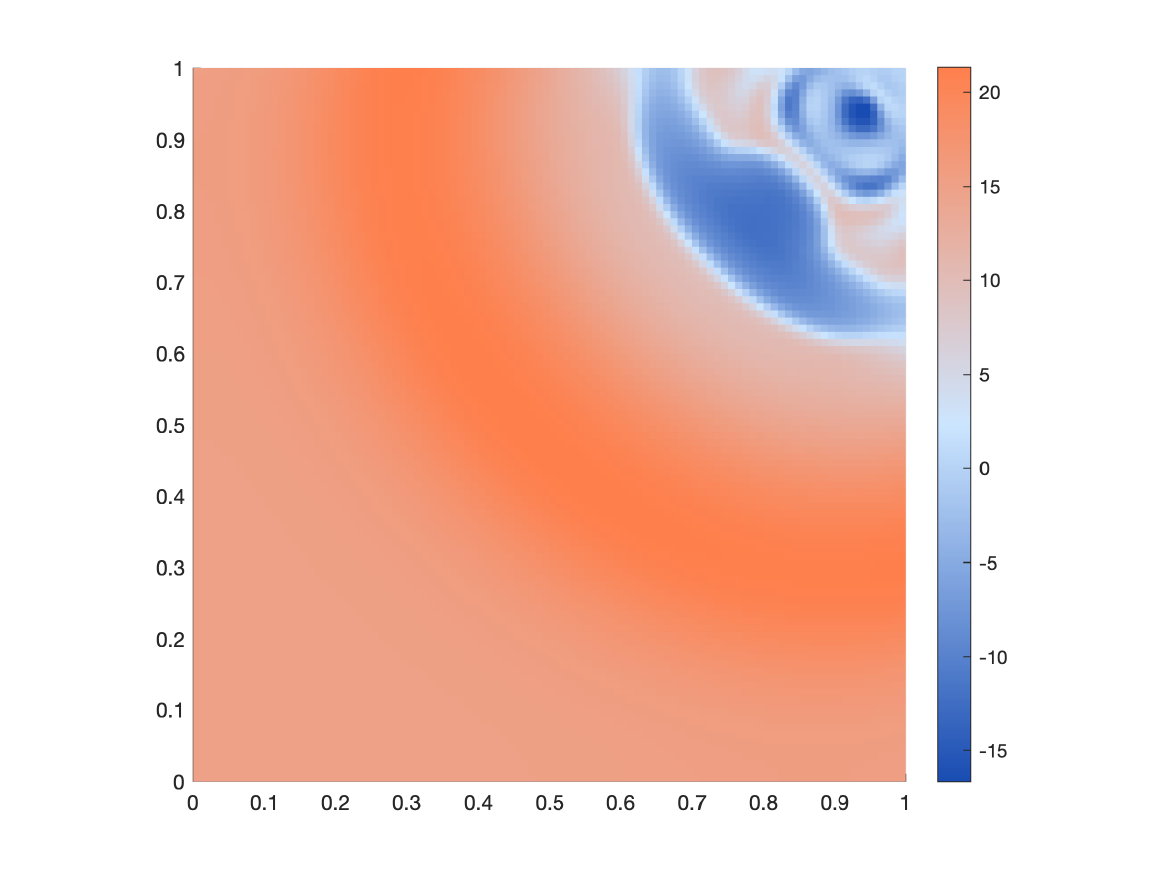}
	\end{minipage}
	
	\vspace{0.8em}
	\begin{minipage}{0.30\textwidth}
		\centering
		\includegraphics[width=\textwidth]{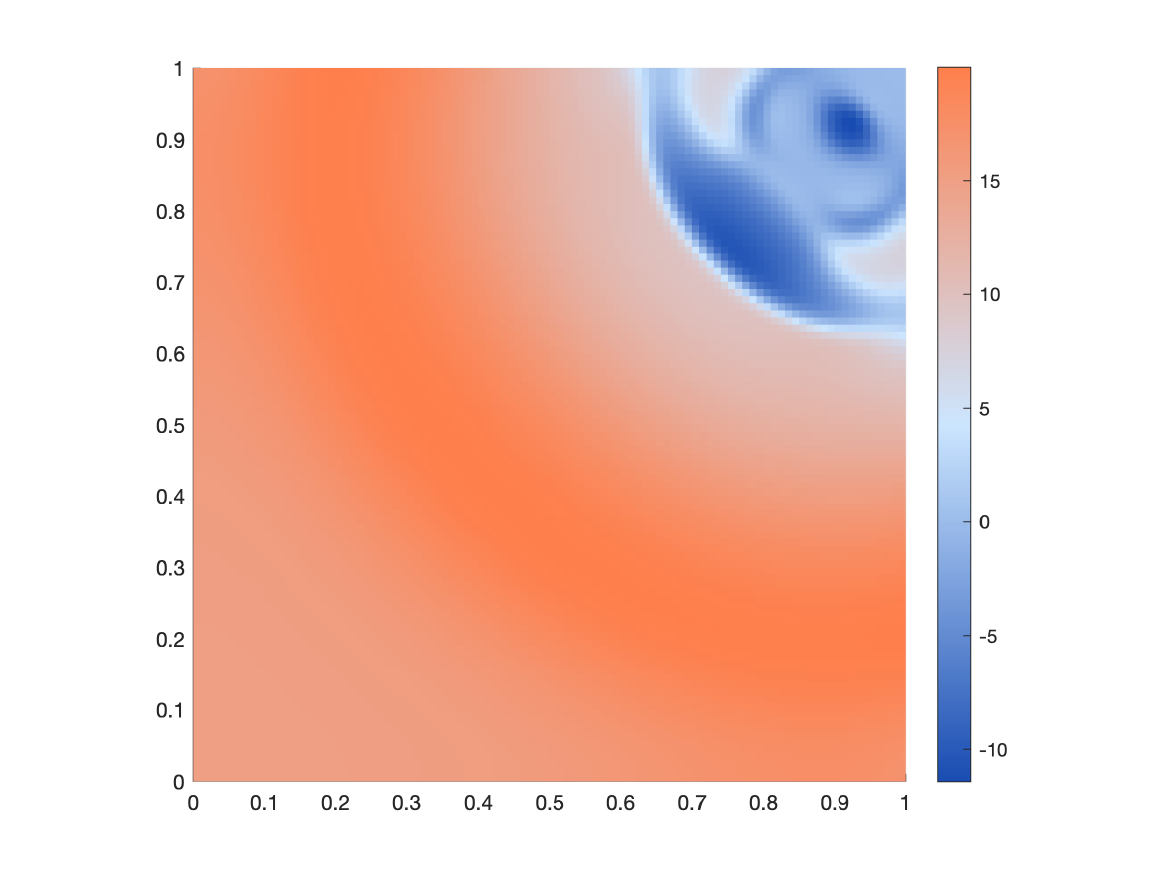}
	\end{minipage}
	\begin{minipage}{0.30\textwidth}
		\centering
		\includegraphics[width=\textwidth]{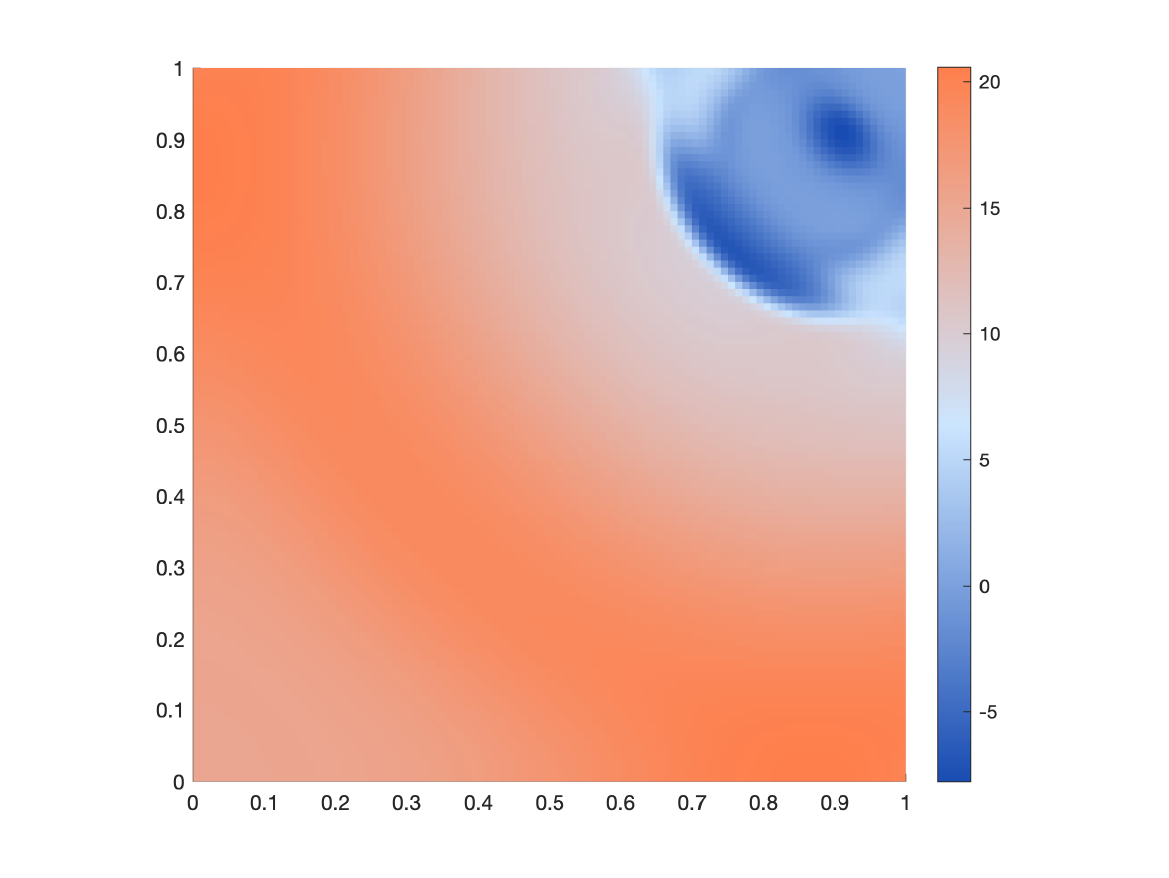}
	\end{minipage}
	\begin{minipage}{0.30\textwidth}
		\centering
		\includegraphics[width=\textwidth]{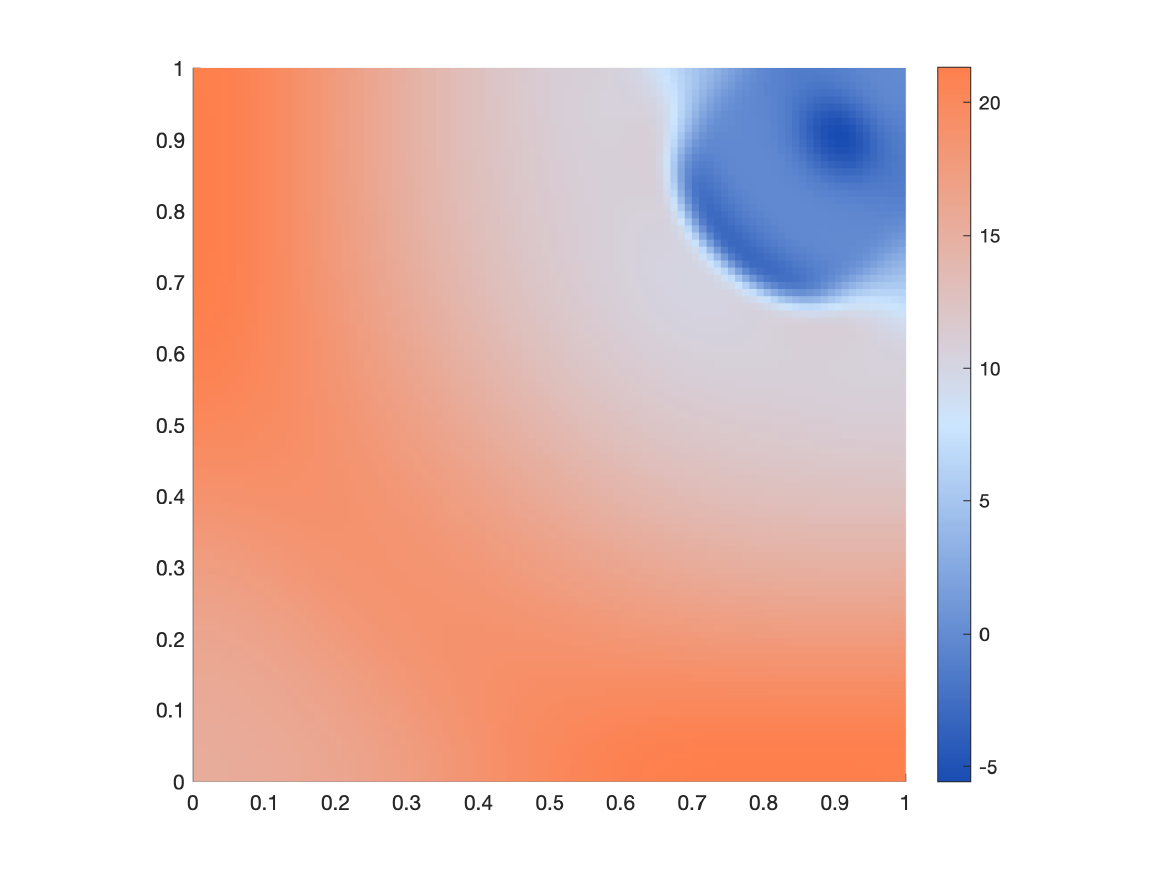}
	\end{minipage}
	
	\caption{Cell density at $k=1,10,15,20,25,30$.}
	\label{biogu-benckmark2-eta}
\end{figure}

\begin{figure}[htbp] 
	\centering 
	\begin{minipage}{0.45\textwidth} 
		\centering  
		\includegraphics[width=\textwidth]{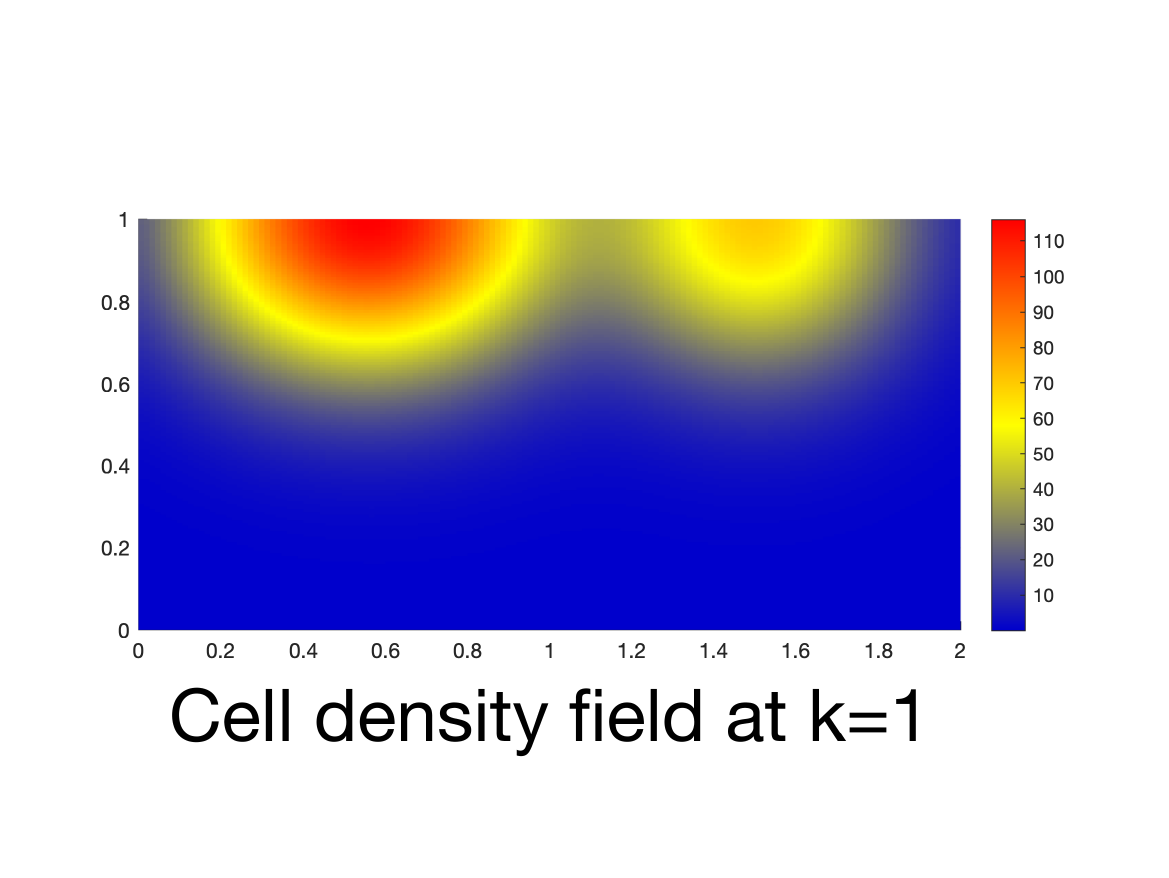}  
	\end{minipage}  
	\begin{minipage}{0.45\textwidth} 
		\centering  
		\includegraphics[width=\textwidth]{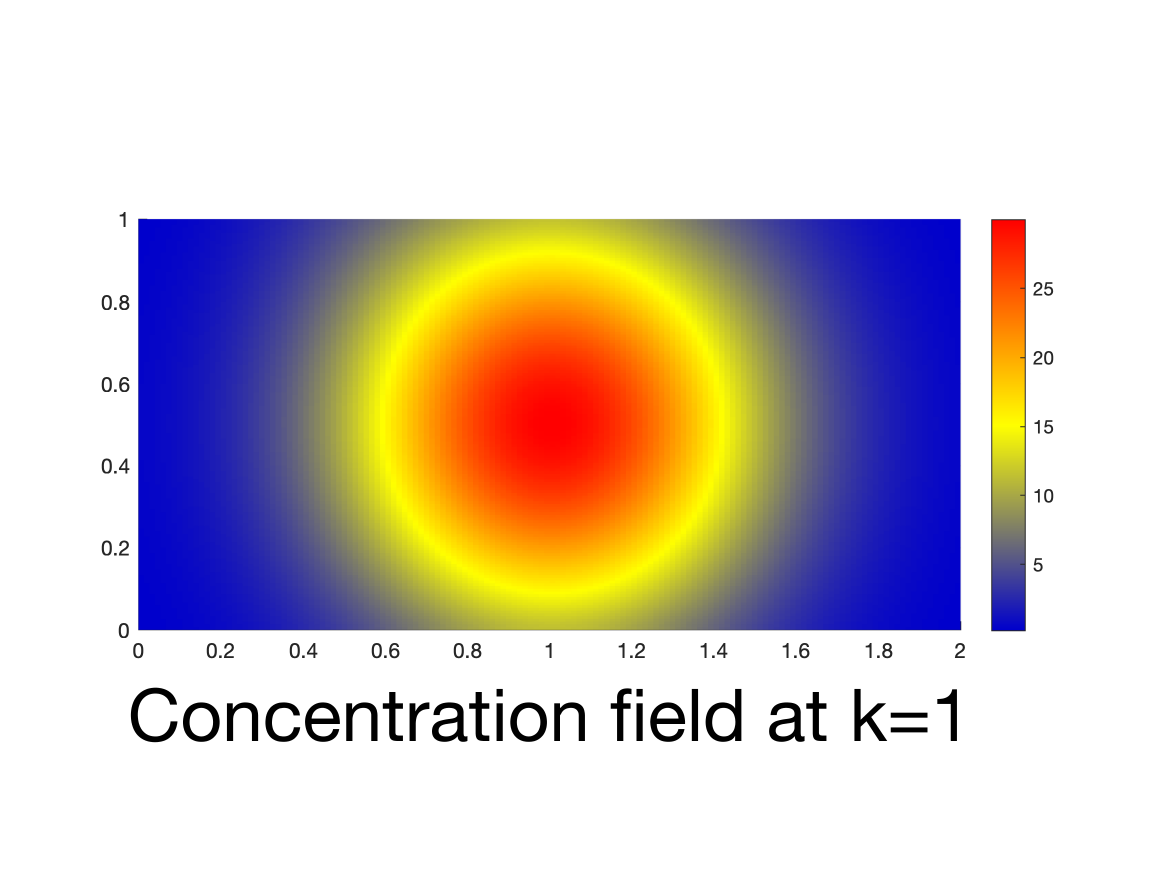} 
	\end{minipage}  
	\qquad
	\begin{minipage}{0.45\textwidth}  
		\centering  
		\includegraphics[width=\textwidth]{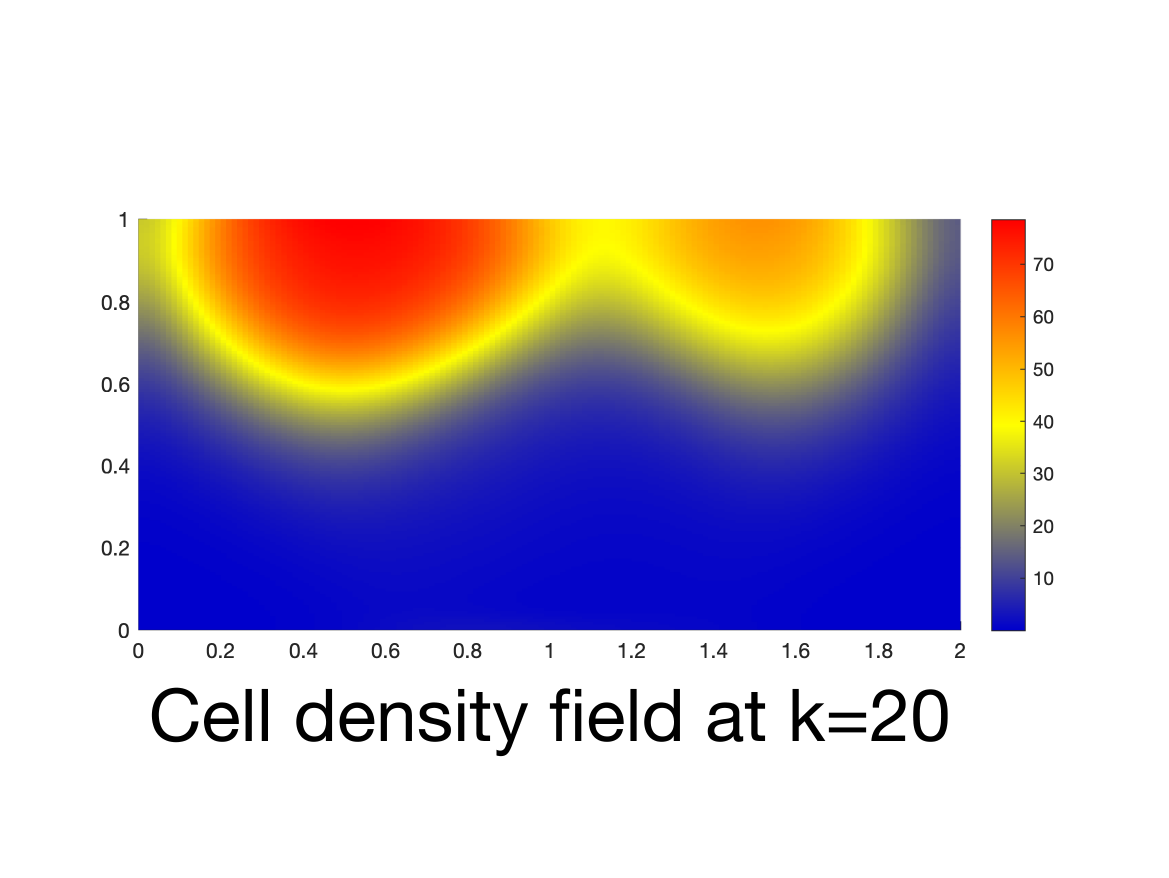}  
	\end{minipage}  	\begin{minipage}{0.45\textwidth} 
		\centering  
		\includegraphics[width=\textwidth]{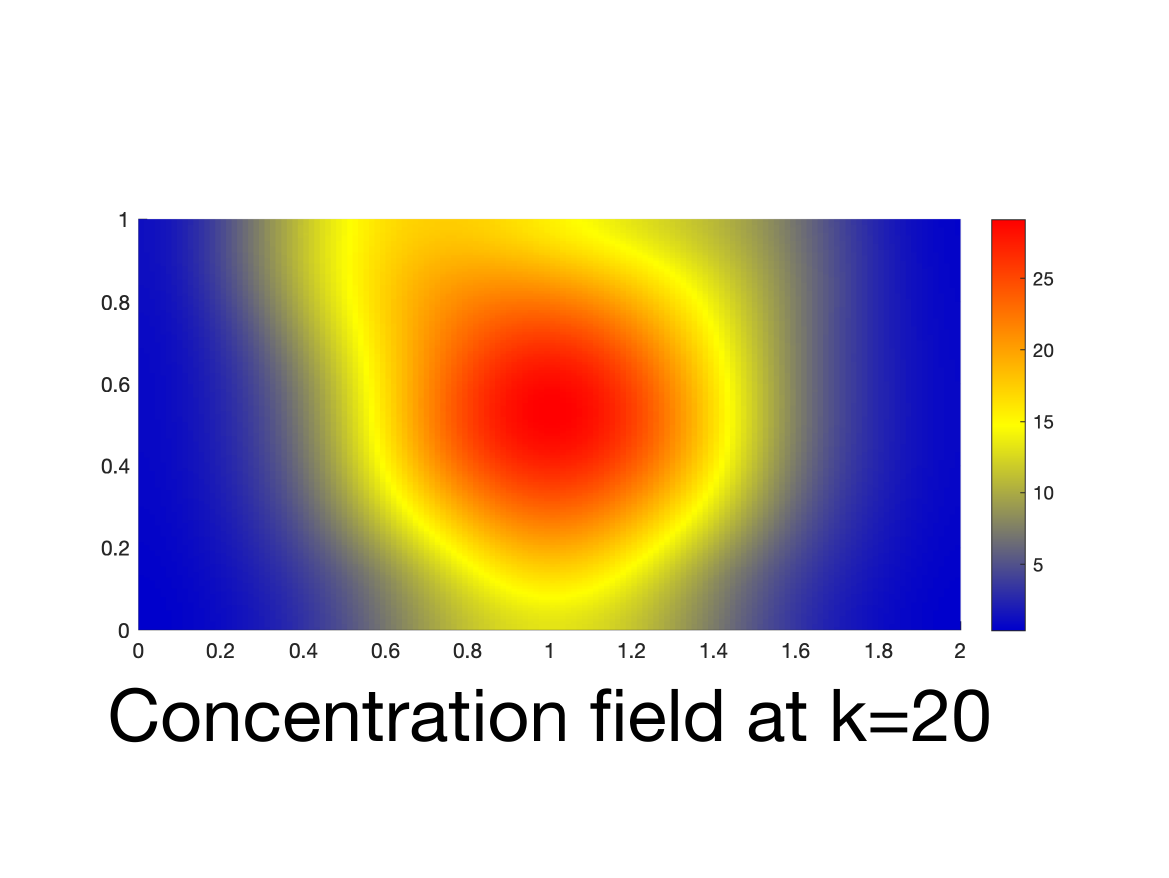}  
	\end{minipage}  
	\qquad
	\begin{minipage}{0.45\textwidth} 
		\centering  
		\includegraphics[width=\textwidth]{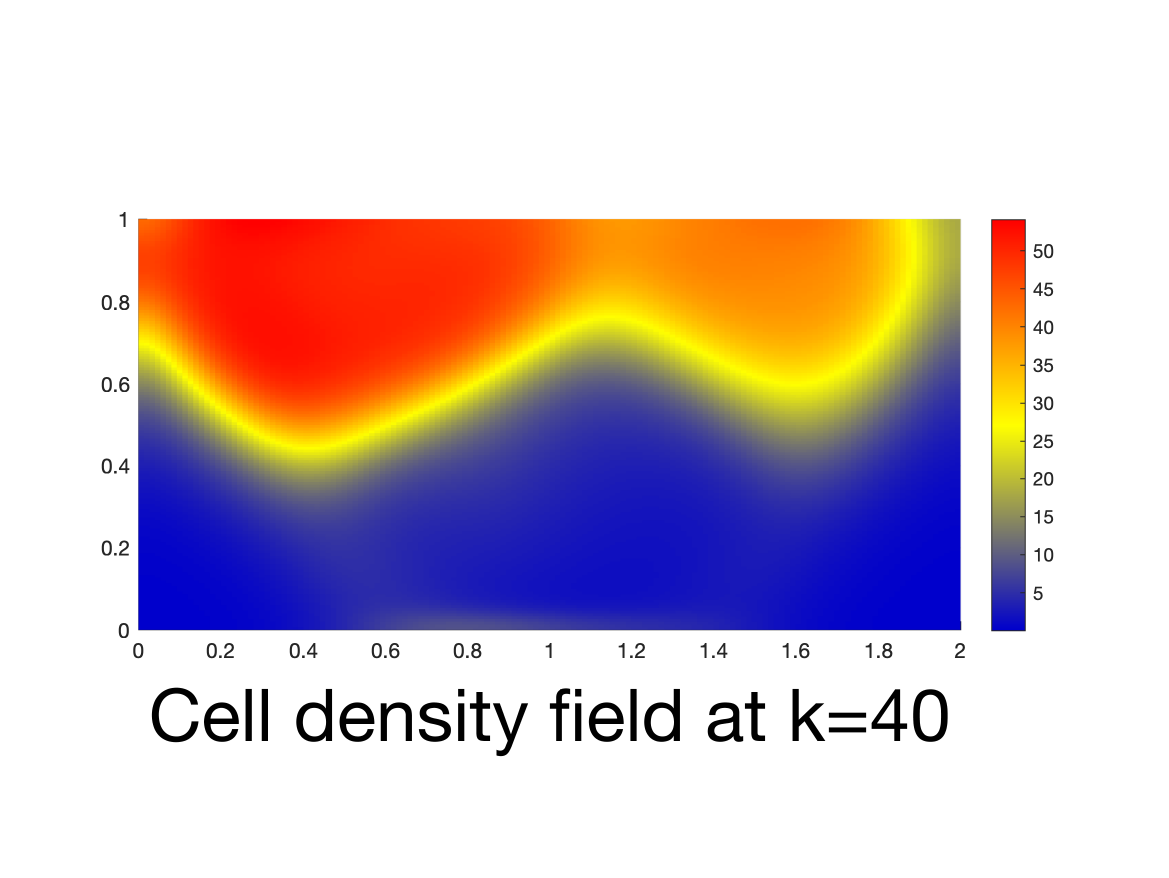} 
	\end{minipage}  
	\begin{minipage}{0.45\textwidth}  
		\centering  
		\includegraphics[width=\textwidth]{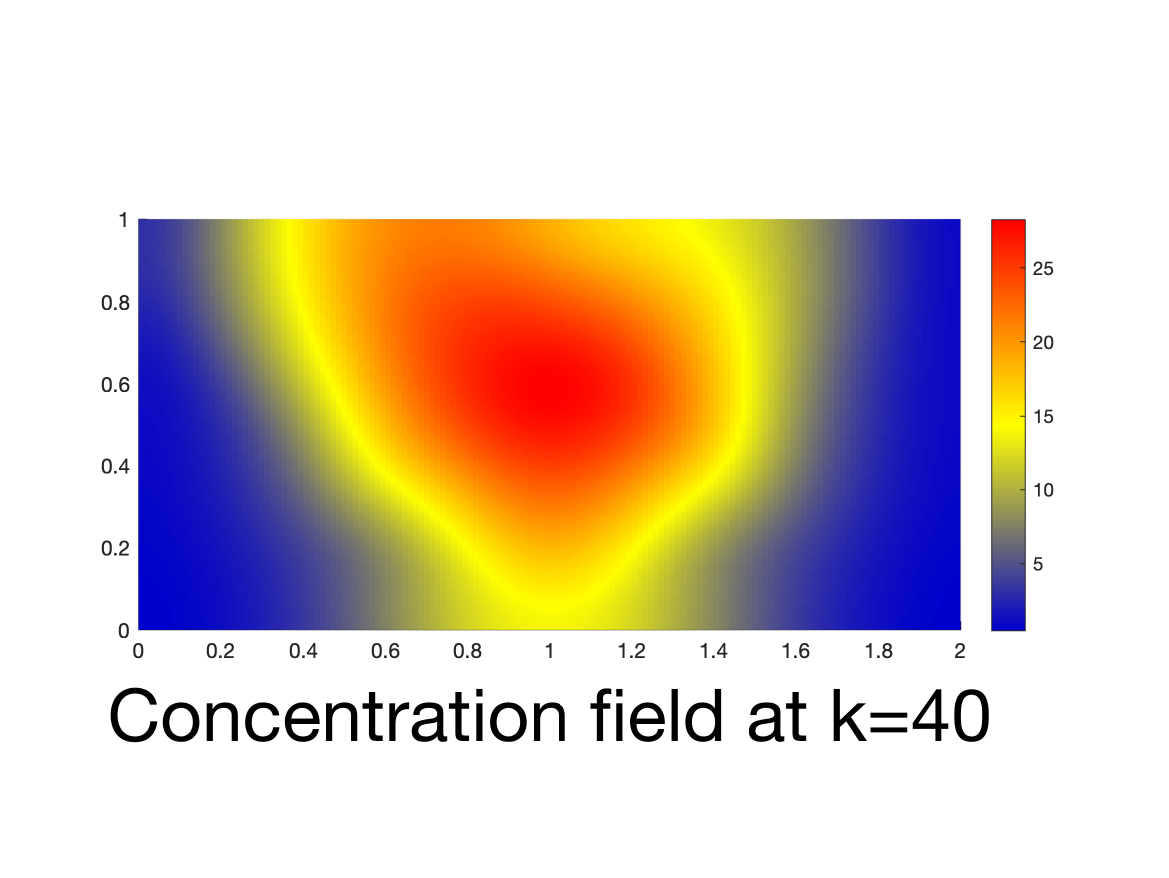}  
	\end{minipage}  
	\qquad
	\begin{minipage}{0.45\textwidth} 
		\centering  
		\includegraphics[width=\textwidth]{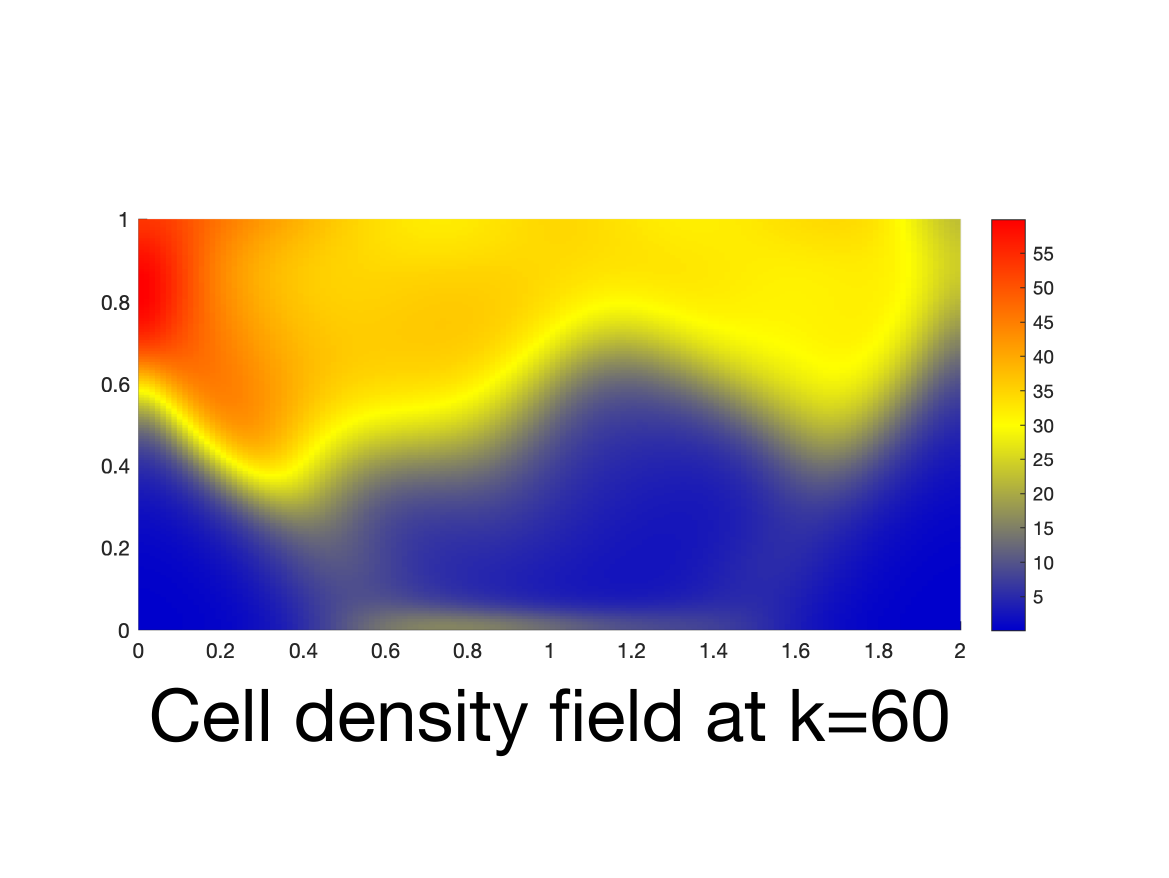} 
	\end{minipage}  
	\begin{minipage}{0.45\textwidth}  
		\centering  
		\includegraphics[width=\textwidth]{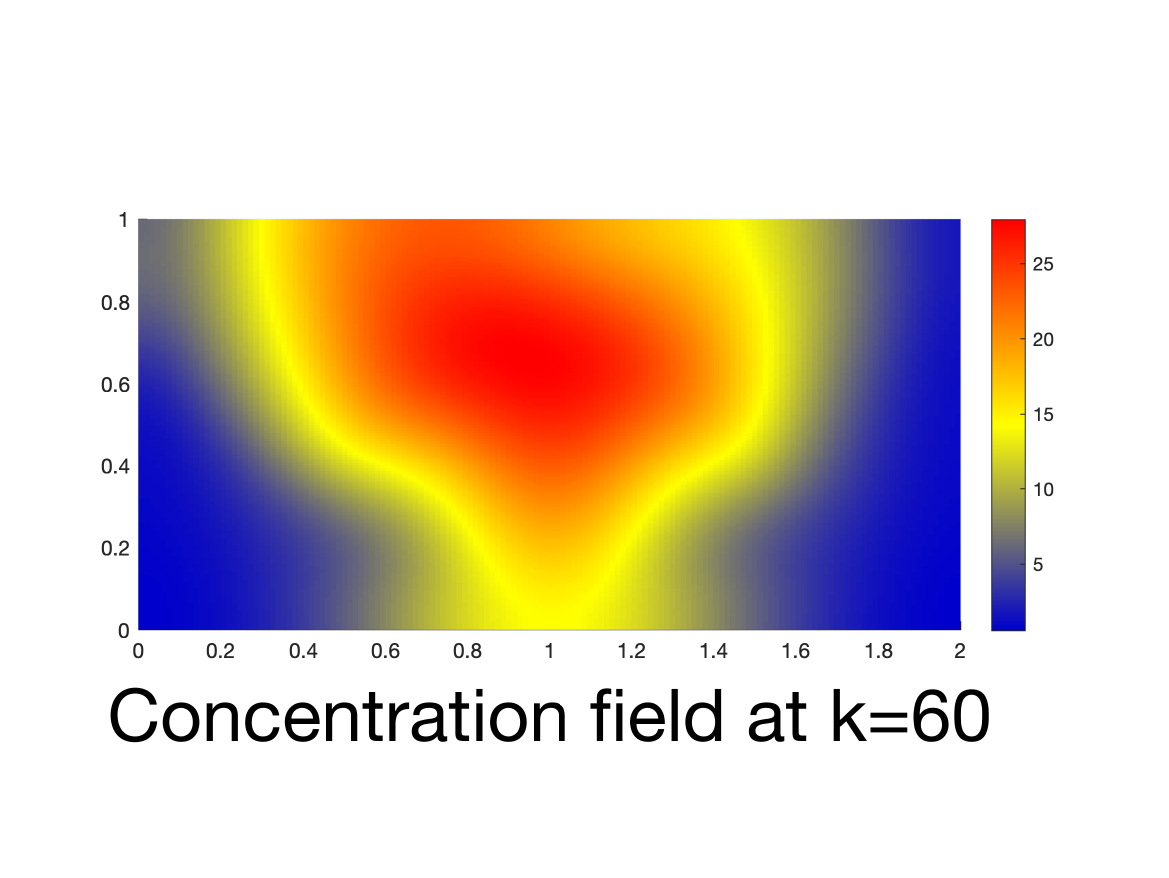}  
	\end{minipage}  
	\vspace{1.0em}
	\caption{Cell density vs. chemical concentration at $k=1,20,40,60$.}  
	\label{biogu-chemo-celldensity}  
\end{figure}

\begin{figure}[htbp] 
	\centering 
	\begin{minipage}{0.45\textwidth} 
		\centering  
		\includegraphics[width=\textwidth]{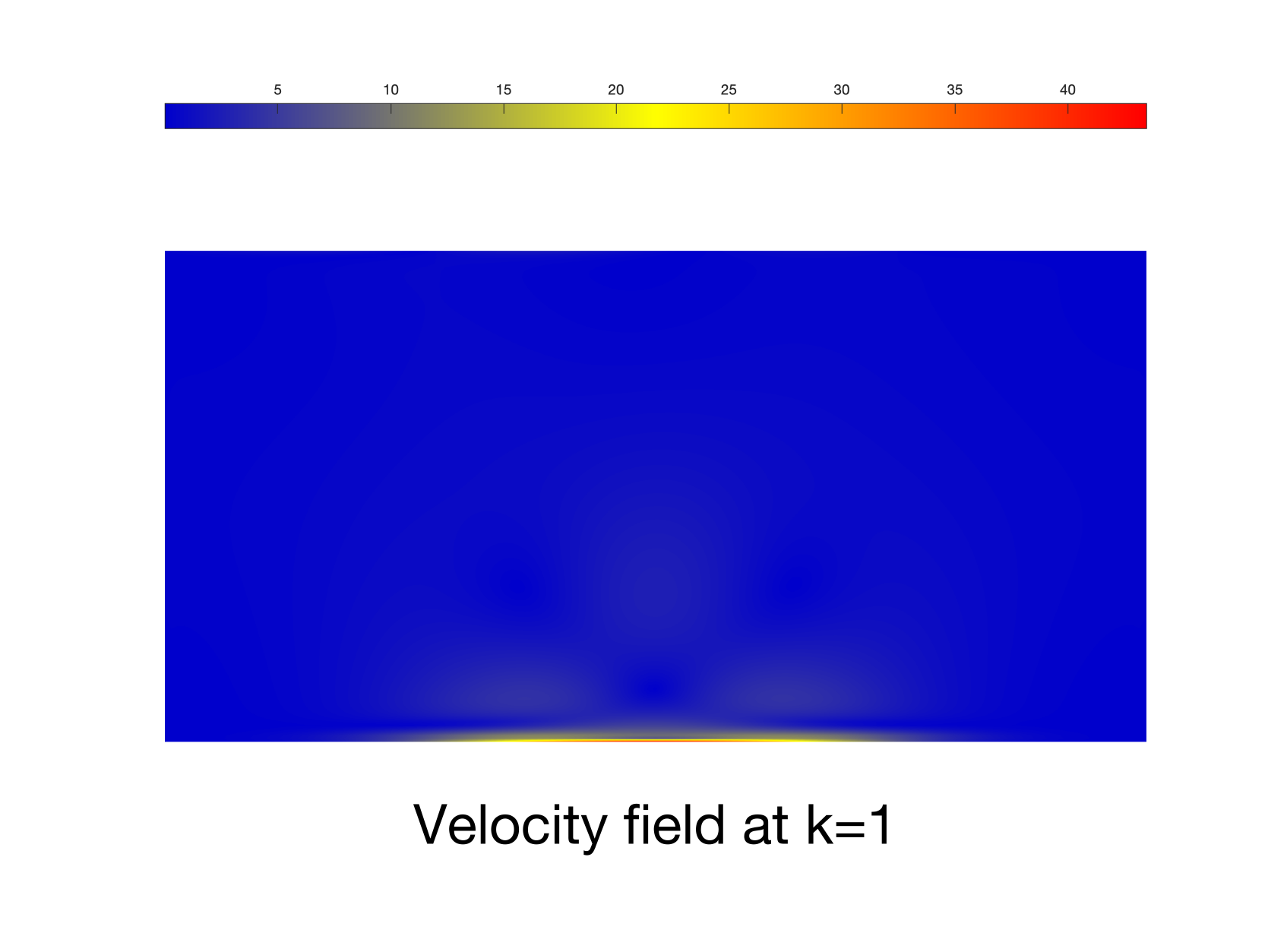}  
	\end{minipage}  
	\begin{minipage}{0.45\textwidth} 
		\centering  
		\includegraphics[width=\textwidth]{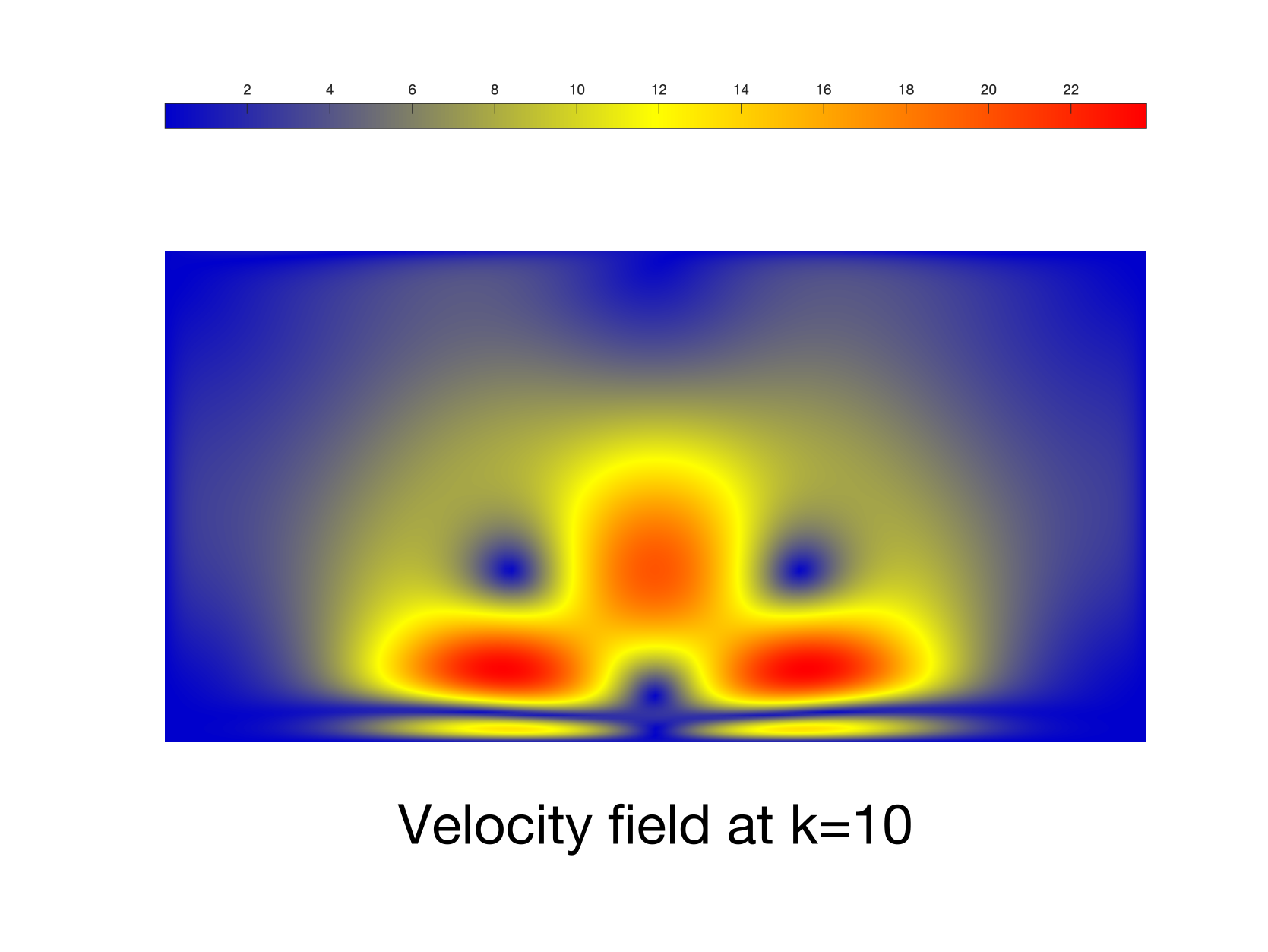} 
	\end{minipage}  
	\qquad
	\begin{minipage}{0.45\textwidth}  
		\centering  
		\includegraphics[width=\textwidth]{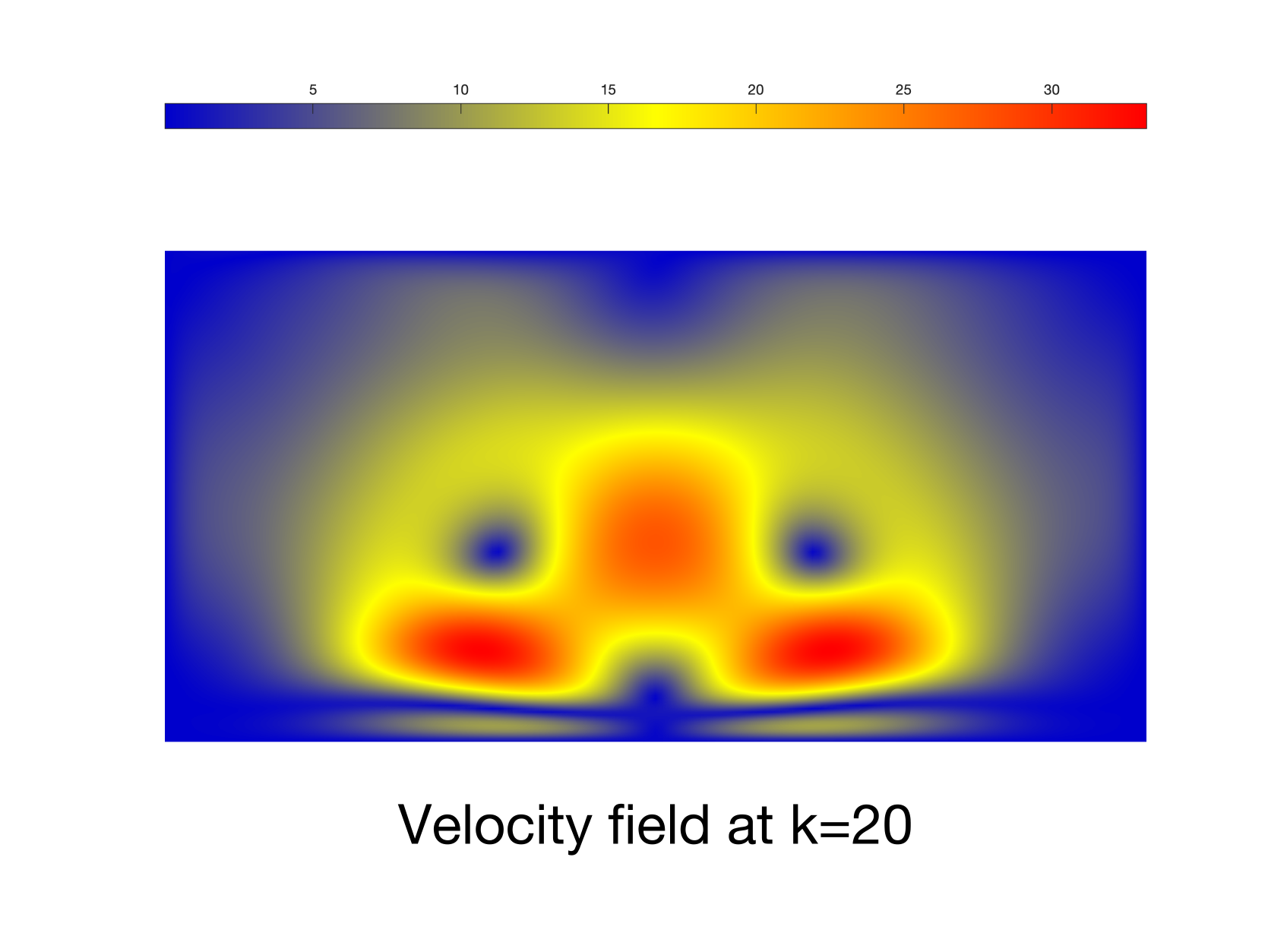}  
	\end{minipage}  	\begin{minipage}{0.45\textwidth} 
		\centering  
		\includegraphics[width=\textwidth]{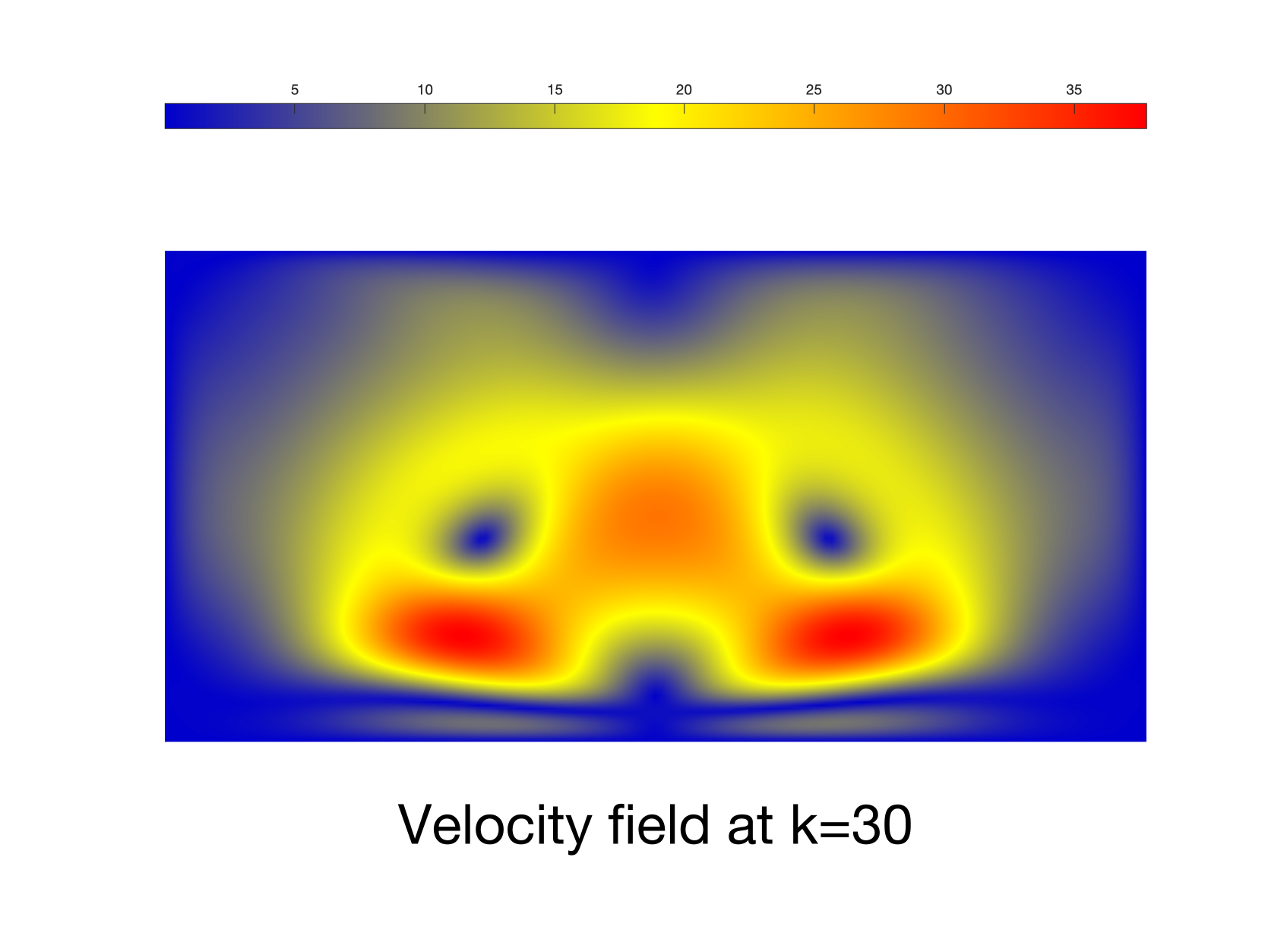}  
	\end{minipage}  
	\qquad
	\begin{minipage}{0.45\textwidth} 
		\centering  
		\includegraphics[width=\textwidth]{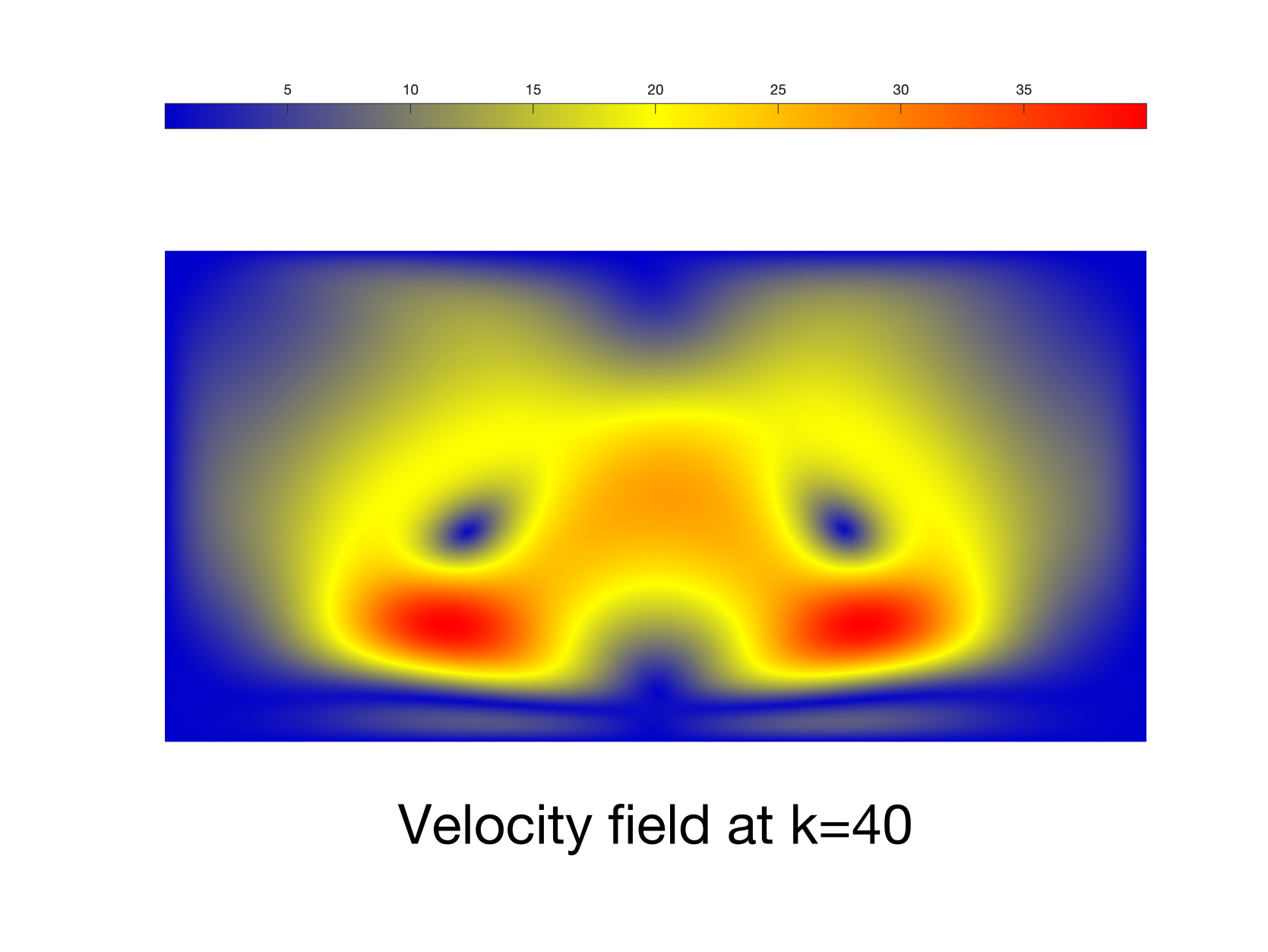} 
	\end{minipage}  
	\begin{minipage}{0.45\textwidth}  
		\centering  
		\includegraphics[width=\textwidth]{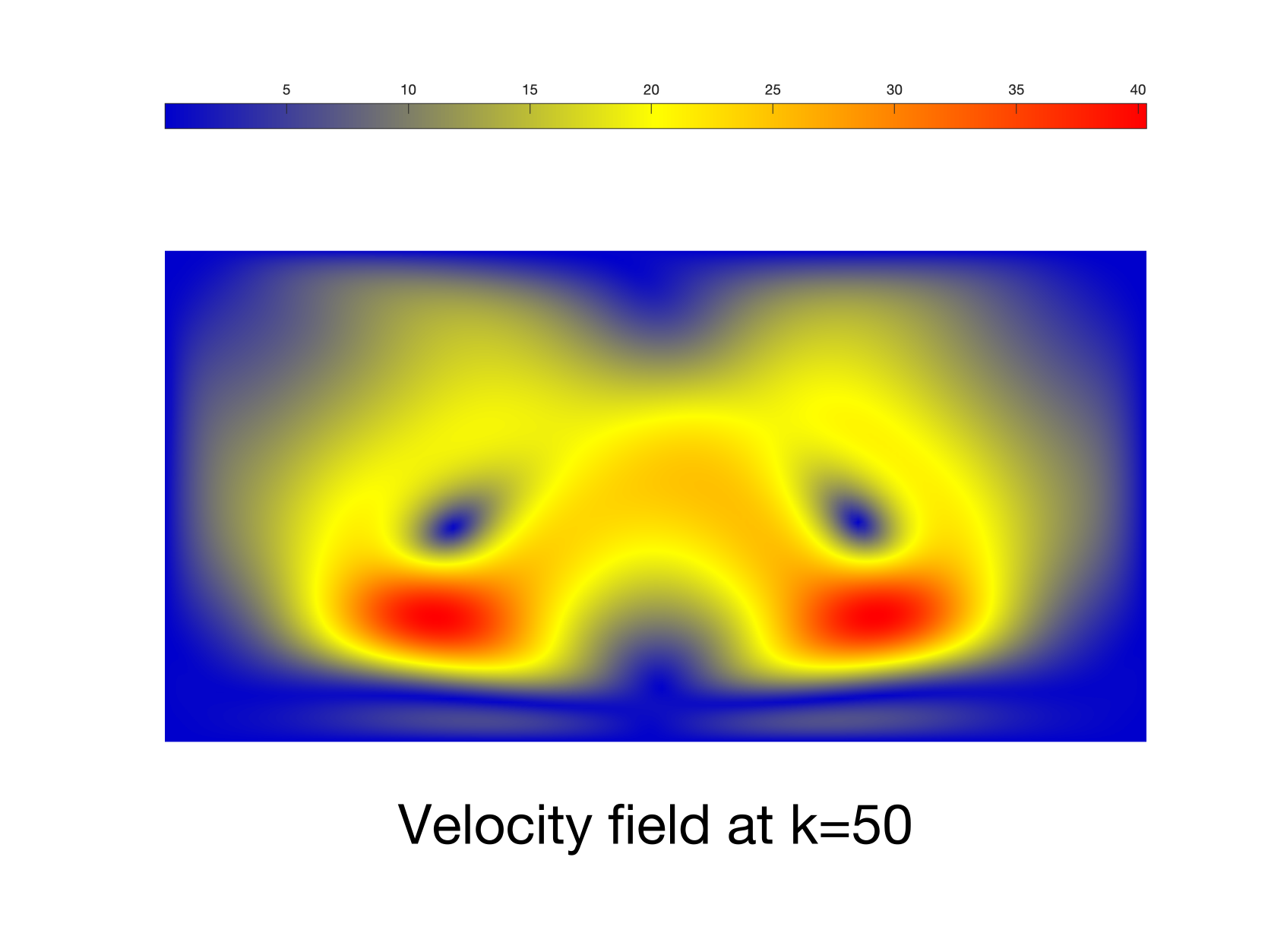}  
	\end{minipage}  
	\caption{Evolution of the velocity field of the fluid at $k=1,10,20,30,40,50$.}  
	\label{biogu-chemo-velocity}  
\end{figure}

\subsection{Microorganisms move towards lower chemical concentrations}
In this numerical example, we consider the computational domain 
$
\Omega = [0,2]\times[0,1],
$
and aim to simulate another behavior of microorganisms moving toward regions of lower chemo-attractant concentration. The initial conditions are prescribed as follows \cite{wang2023}:
\begin{align}
\begin{cases}
	\eta_0(x,y) &= 70\exp\!\big(-8(x-0.4)^2 - 8(y-1)^2\big)\\
	&\quad + 70\exp\!\big(-8(x-0.7)^2 - 8(y-1)^2\big) \\
	&\quad + 70\exp\!\big(-8(x-1.5)^2 - 8(y-1)^2\big), \\
	c_0(x,y) &= 30\exp\!\big(-4(x-1)^2 - 4(y-0.5)^2\big), \\
	\u_0(x,y) &= \textbf{0}, \qquad p_0(x,y)=0.
\end{cases}
\end{align}

The model coefficients are chosen as 
$
\mu_1 = \mu_2 = \mu_3 = 1.
$
The numerical solution is computed using a uniform time step 
$
\tau = 0.0001,
$
and mesh size 
$
h = 1/150.
$
We visualize the numerical solutions at different time step $k$. Figure \ref{biogu-chemo-celldensity} displays the evolution of microorganism density and chemical concentration, it is evident that the chemoattractant concentration is initially highest near the central region of the domain. Microorganisms located in the structured initial distribution begin migrating toward regions of lower chemoattractant concentration.  
Due to the motility of microorganisms, their movement induces fluid motion, which is observable in Figure \ref{biogu-chemo-velocity}. Subsequently, it demonstrates that the chemoattractant begins to diffuse outward under the influence of fluid transport. As the microorganisms respond to the gradient of chemoattractant, stronger directional motion is observed toward regions of lower concentration, leading to an accumulation of microorganism density near the lateral boundaries.  
In summary, the simulations confirm that the microorganism dynamics are predominantly governed by movement along descending chemoattractant gradients.

\section{Conclusion}

In this paper, we have developed a novel first-order fully discrete numerical scheme for the chemo--repulsion--Navier--Stokes system based on the gauge--Uzawa method.
The proposed scheme is rigorously proven to be unconditionally energy stable.
Moreover, we have derived unique solvability and  error estimates for all primary variables, providing a solid theoretical foundation for the accuracy of the method.
Finally, a series of numerical experiments have been conducted, which confirm the accuracy, stability, and efficiency of the proposed scheme. In future research, we aim to develop high-order energy-stable numerical algorithms that can further enhance the accuracy and stability of the proposed framework.
Such extensions are expected to broaden the applicability of our approach to more general chemotaxis--fluid systems and other multi-physics coupling problems.

 \section*{CRediT authorship contribution statement}
 Chenyang Li:
 Writing -- original draft, Visualization, Validation, Software, Methodology, Conceptualization;
Ping Lin:
 Methodology, Conceptualization;
Haibiao Zheng:
 Methodology, Conceptualization;
 
 \section*{Data availability}
 Data will be made available on request.

\section*{Declaration of competing interest}
 The authors declare that they have no known competing financial interests or personal relationships
 that could have appeared to influence the work reported in this paper.
 \section*{Acknowledgments}
 The authors would like to thank the editor and referees for their valuable comments and suggestions
 which helped us to improve the results of this paper. This work was supported by National Natural Science Foundation of China (No. 12471406) and the Science
 and Technology Commission of Shanghai Municipality (Grant Nos. 22JC1400900, 22DZ2229014).

	\bibliographystyle{abbrv}

\end{document}